\numberwithin{equation}{section}
\newtheorem{theorem}{Theorem}[section]
\newtheorem{lemma}{Lemma}[section]
\newtheorem{proposition}{Proposition}[section]
\newtheorem{assumption}{Assumption}
\theoremstyle{definition}
\newtheorem{claim}{Claim}
\theoremstyle{remark}
\newtheorem{remark}{Remark}[section]
\title[Global Well-Posedness of 2D Non-Focusing Schr\"odinger Equations]{Global Well-Posedness of 2D Non-Focusing Schr\"odinger Equations via Rigorous Modulation Approximation}
\author{Nathan Totz}
\address{Department of Mathematics \\
University of Massachusetts Amherst \\
Amherst, MA, 01003}
\begin{document}

\begin{abstract}
We consider the long time well-posedness of the Cauchy problem with large Sobolev data for a class of nonlinear Schr\"odinger equations (NLS) on $\mathbb{R}^2$ with power nonlinearities of arbitrary odd degree.  Specifically, the method in this paper applies to those NLS equations having either elliptic signature with a defocusing nonlinearity, or else having an indefinite signature.  By rigorously justifying that these equations govern the modulation of wave packet-like solutions to an artificially constructed equation with an advantageous structure, we show that a priori every subcritical inhomogeneous Sobolev norm of the solution increases at most polynomially in time.  Global well-posedness follows by a standard application of the subcritical local theory.
\end{abstract}

\maketitle

%==============================================================================
%==============================================================================
%==============================================================================
\section{Introduction}
%==============================================================================
%==============================================================================
%==============================================================================

Consider the following nonlinear Schr\"odinger initial value problem on $\mathbb{R}^2$:
\begin{equation}\label{NormalizedNLSPrescale}
\begin{cases} iu_t + \alpha u_{xx} - u_{yy} + u|u|^{q - 1} = 0, \\ u(0) = u_0 \in H^s(\mathbb{R}^2) \end{cases}
\end{equation}
where $u = u(x, y, t)$, $(x, y) \in \mathbb{R}^2$, $\alpha = \pm 1$, and $q \in 2\mathbb{N} + 1$.  Here $H^s$ is the usual $L^2$-Sobolev space of index $s$.  The problem with $\alpha = -1, q = 3$ is the familiar defocusing cubic nonlinear Schr\"odinger equation (NLS).  Similarly, the problem with a cubic nonlinearity $q = 3$ and $\alpha = 1$ is the so-called cubic ``hyperbolic'' NLS equation (HNLS).  Both of these equations arise frequently as governing equations for the modulation of wave packets in weakly nonlinear dispersive media, for example in water waves \cite{AblowitzSegur} and nonlinear optics \cite{ElectromagneticXWaves}.  For general $q \in 2\mathbb{N} + 1$ we call this equation (qNLS) and (qHNLS) for $\alpha = -1, 1$ respectively.  This paper addresses the question of the global existence of solutions to \eqref{NormalizedNLSPrescale}.

We begin by contrasting against existing results for the more familiar equation (NLS).  In the last few decades, there has been extensive work in studying (NLS): the aim is to determine the minimal amount of smoothness needed on the initial data to ensure that the corresponding solution to (NLS) is globally well-posed and scatters.  Often, the methods eventually used to treat (NLS) were developed in the context of other problems.  Earlier approaches and influential works include \cite{BourgainCritNLSRadial}, \cite{GrillakisNLS}, \cite{CazenaveWeissler90}, \cite{TaoEnergyCritNLSRadial}.  A breakthrough occurred with the work \cite{ITeamEnergyCritNLSR3}, which was systematized in \cite{KenigMerleCubicNLS3D} into the method of \textit{concentrated-compactness plus rigidity}.  This ``road map'' was in turn used and refined in \cite{KTVCubicNLS2DRadial}, \cite{VisanGWPSNLSinR4}, \cite{Dodson2DDefocusingNLSL2}.  This last work demonstrates the remarkable fact that there is global well-posedness and scattering for large initial data in $L^2$.  Since $L^2$ is the \textit{critical} Sobolev space for (NLS) (that is, the space that is invariant under the natural scaling of (NLS)), one expects that this is the largest of the Sobolev spaces for which global well-posedness and scattering can be expected to hold. For more details of these results in the context of more general NLS type equations, consult \cite{JasonMurphyThesis}.  If one combines the result of \cite{Dodson2DDefocusingNLSL2} and persistence of scattering (c.f. \cite{SohingerHighNormHartree}), the global well-posedness and scattering for large initial data in $H^s$ for all subcritical indices $s > 0$ also follows.

Despite this progress, comparatively little is known for (HNLS).  The work \cite{GhidagliaSautDaveyStewartson} used dispersive arguments to show global well-posedness of (HNLS) under a small data assumption.  In \cite{GhidagliaSautNoTravelingWaves} it was shown that (HNLS) does not admit localized traveling wave solutions.  The work \cite{KNZRadialHNLS} recasts (HNLS) in well-adapted hyperbolic coordinates and, in these coordinates, constructs special radial and self-similar solutions to (HNLS) which correspond to having large initial data.  While the special solutions are in $L^2$ with respect to the hyperbolic coordinates, they lie outside of $L^2$ in standard rectangular coordinates.  To date, nothing is known about the long-time well-posedness for the initial value problem (HNLS) with general large data in the $L^2$-Sobolev class, even in very regular Sobolev spaces.  A fundamental obstacle is that the natural energy associated to \eqref{NormalizedNLSPrescale},
$$\int_{\mathbb{R}^2} -\alpha\frac{|u_x|^2}{2} + \frac{|u_y|^2}{2} + \frac{|u|^{q + 1}}{q + 1} \, dx \, dy,$$
is indefinite when $\alpha = 1$.  This defeats attempts to show global well-posedness for large data with regularity above the level of the Hamiltonian using classical methods.  Although the total mass
$$\int_{\mathbb{R}^2} |u|^2 \, dx \, dy$$
is conserved and gives control of the $L^2$ norm of the solution, this quantity scales either critically or supercritically with respect to the natural invariant scaling of \eqref{NormalizedNLSPrescale} and hence cannot be directly exploited to study long-time existence, just as is the case for (NLS).  Therefore, as with the study of low-regularity global well-posedness and scattering for (NLS), one's only hope is to appeal to the concentration-compactness plus rigidity method.  However, to date it is not clear that the analytic tools needed to carry out the concentration-compactness plus rigidity program for (HNLS) exist.

In this paper, we use methods quite different from those applied in the past to study \eqref{NormalizedNLSPrescale} in order to provide a unified approach to showing global well-posedness of the problem \eqref{NormalizedNLSPrescale} for powers $q \in 2\mathbb{N} + 1$ of arbitrarily large degree, provided that data is considered in any subcritical Sobolev space.  We do this not by studying \eqref{NormalizedNLSPrescale} in isolation, but by taking advantage of the fact that solutions to \eqref{NormalizedNLSPrescale} arise in certain asymptotic limits of higher-order evolution equations.

The equation \eqref{NormalizedNLSPrescale} arises as a singular perturbation of many physical systems.  In particular, (HNLS) models the modulation of a wave packet solution to the deep water limit of the 3D water wave problem; this was first derived formally in \cite{ZakharovInfiniteDepth}.  The method used in this paper is inspired by \cite{Totz3DNLS}, where this approximation is rigorously justified in the context of deep water waves.  More generally, given a solution $z$ to some (as yet unspecified) evolution equation with formal wave packet solution $\tilde{z}$ of the form\footnote{This is not the actual formal series solution $\tilde{z}$ that we use in the paper, since we will require a number of additional rescalings of $u$ for technical reasons.  The properly rescaled version of the modulation that we will actually use are given \eqref{Rescaling}-\eqref{NormalizedNLS}.}
\begin{equation}\label{WavePacketSolution}
\epsilon u(\epsilon (x + \omega^\prime(k) t), \epsilon y, \epsilon^2 t)e^{i(kx + \omega(k)t)} + O(\epsilon^2)
\end{equation}
for some small typical amplitude $0 < \epsilon \ll 1$ and wave number $k$, a \textbf{full justification} of NLS consists roughly of performing the following steps:
\begin{itemize}
\item[(i)]{The NLS equation \eqref{NormalizedNLSPrescale} is locally well-posed in a suitable function space.}
\item[(ii)]{An approximate solution $\tilde{z}$ of the form \eqref{WavePacketSolution} constructed using the solution $u$ given in Step (i) can be found which formally satisfies the equation for $z$ up to residual terms of physical size at most $o(\epsilon^3)$.}
\item[(iii)]{For all sufficiently small values of $\epsilon > 0$, the evolution equation governing $z$ is well posed on a function space containing an open neighborhood of the initial data $\tilde{z}(0)$ of the approximate solution, and solutions $z$ with initial data in such a space exist for times on the order $O(\epsilon^{-2})$.}
\item[(iv)]{For all sufficiently small values of $\epsilon > 0$, the remainder $z- \tilde{z}$ is of size at most $o(\epsilon)$ in a suitable function space.}
\end{itemize}

There are many justifications of NLS for various evolution equations in the literature.  Such a full justification need not hold even in cases where a formal wave packet solution can be constructed (e.g., \cite{SchneiderValidityNW}, \cite{GallaySchneiderKP}).  The earliest such justification in the modulation regime was given in \cite{KalyakinNLSJust} for evolution equations with quite general semilinear quadratic nonlinearities in the presence of a non-resonance condition.  In the case of cubic and higher order nonlinearities, \cite{KSMCubicNonlinearityLongtimeRemainder} shows that such a justification follows from an easy application of Gr\"onwall's inequality.  Further progress has been made in justifying NLS in special cases of equations with quadratic nonlinearities in which the special structure of the equation is used; in particular much work has been done in studying this problem in the context of the water wave problem, c.f. \cite{Schneider05}, \cite{SchWPD06}, \cite{SchneiderWayneNLSJustify}, \cite{TotzWu2DNLS}, \cite{Totz3DNLS}, \cite{DullSchneiderWayne2DNLSFinite}.

Demonstrating that Step (iii) holds in the above program can be challenging if one only considers the behavior of $z$ according to its evolution equation in isolation, since long-time well-posedness results for solutions $z$ that are initially close to wave packets may not establish existence for sufficiently long times (this is the case in the water wave problem).  However, if one has already shown Steps (i), (ii), (iv), one can sidestep this problem using a priori bounds depending on the approximate solution.  To describe the idea loosely: suppose $\tilde{z}$ exists on a long time scale $[0, \mathbf{T}]$.  Step (iii) requires one to establish that $z$ exists for the times $[0, \mathbf{T}]$ as well.  Suppose it is known that $\tilde{z} = O(1)$ on $[0, \mathbf{T}]$.  If one has a priori bounds on the estimate of the form $\|z - \tilde{z}\| = o(\epsilon)$ on $[0, \mathbf{T}]$ as well, then there are a priori bounds on $z = \tilde{z} + (z - \tilde{z})$ of the form $\|z\| = O(1)$ on $[0, \mathbf{T}]$.  The existence of $z$ on all of $[0, \mathbf{T}]$ then follows if the well-posedness theory of $z$ admits a suitable blow-up alternative.   

The approach of this paper is to take advantage of this interplay between $z$ and $\tilde{z}$ in the opposite direction: if we were to know that solutions $z(t)$ in the modulation regime of the original evolution equation had sufficiently long-time existence \textit{independent of the dynamics of $\tilde{z}(t)$}, the same argument as above would establish a long-time bound on $\tilde{z}$, which might in turn be used to control the corresponding solution of (HNLS).

We emphasize that one \textit{cannot} implement this approach using existing full justification results in the case of deep water waves and (HNLS): the obstacle is showing the water wave problem with wave-packet data exists for sufficiently long times independent of the NLS evolution of the modulation.  While proofs of global existence for 3D deep water waves exist in the literature (c.f. \cite{WuGlobal3D}, \cite{GermainMasmoudiShatah}), all such proofs require localization assumptions on the initial data.  This precludes using known long-time existence results directly, since wave packets do not lie in these admissible classes of initial data.  As mentioned above, standard local existence results yield existence times that are too short to provide full justification.  Fortunately, model equations of nonlinear Schr\"odinger form arise generically as modulation approximations to dispersive equations, so we have some freedom to choose a more suitable dispersive equation.

Hence, instead of using the water wave problem, we introduce an artificial \textit{progenitor equation} patterned off of the water wave problem which (1) yields \eqref{NormalizedNLSPrescale} as the corresponding \textit{progeny equation} in the modulation regime, and (2) possesses an advantageous structure from which long-time well-posedness can be deduced independently of the dynamics of solutions to the progeny equation.  From this approach we will be able to conclude that the $H^s$ norm of the solution to \eqref{NormalizedNLSPrescale} a priori increases at an a polynomial rate in time for any $s > s_c$, where $s_c = 1 - \frac{2}{q - 1}$ is the critical scaling index of \eqref{NormalizedNLSPrescale}.  Global well-posedness in any subcritical space $H^s$ with $s > s_c$ then follows by standard methods for semilinear Schr\"odinger equations.  The precise statement of our result is given by the

\begin{theorem}\label{BigSummaryTheorem}
Let $s_c = 1 - \frac{2}{q - 1}$ be the critical scaling index of \eqref{NormalizedNLSPrescale}, and let $s > s_c$ be given.  Consider the initial value problem \eqref{NormalizedNLSPrescale} for any $\alpha = \pm 1$ and $q \in 2\mathbb{N} + 1$.  Suppose that $u_0 \in H^s(\mathbb{R}^2)$.  Then \eqref{NormalizedNLSPrescale} is globally well-posed with solution $u \in C([0, \infty), H^s)$.  Moreover solutions enjoy the estimate
\begin{equation}\label{APrioriBoundOnU}
\|u(t)\|_{H^s} \leq C_{s, q, \|u_0\|_{H^s}} \|u_0\|_{H^s} (1 + t)^{C_0} 
\end{equation}
where $C_0$ is a universal constant.\footnote{In this paper the constant is on the order of $10^5$, but this can be substantially improved to be on the order of $1$ by optimizing the choice of constants in the proof.  See Remark \ref{BringDownHugeConstant} for more details.}
\end{theorem}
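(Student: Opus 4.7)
The plan is to run the full-justification scheme (i)--(iv) \emph{in reverse}: rather than using long-time existence of $u$ to deduce long-time existence of a progenitor solution $z$, I would engineer a progenitor dispersive equation whose own well-posedness theory gives long-time existence of $z$ \emph{independent} of the NLS dynamics, and then transfer that information back to bound $u$. The first task is therefore to construct the progenitor equation: following the template of \cite{Totz3DNLS}, I would choose a dispersive evolution equation whose linear dispersion relation $\omega(k)$ reproduces, after Taylor expansion at a carrier wavenumber $k_0$ and substitution of the wave-packet ansatz \eqref{WavePacketSolution}, the Schr\"odinger operator $i\partial_t + \alpha \partial_x^2 - \partial_y^2$, and whose nonlinearity is of degree $q$ tuned so that the resonant-frequency contribution at $k_0$ reproduces $u|u|^{q-1}$. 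The progenitor must be chosen with additional structure (a coercive conserved energy, an antisymmetric principal nonlinear symbol, or a diagonalizing change of variables patterned on the water-wave case) that yields an a priori $H^S$ estimate on the long timescale $[0, T/\epsilon^2]$ \emph{with constants not depending on the NLS solution}.

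Next I would handle the reduction of the large-data problem to the small-data setting. For any $s > s_c$ the subcritical scaling $u(t,x) \mapsto \lambda^{2/(q-1)} u(\lambda^2 t, \lambda x)$ maps \eqref{NormalizedNLSPrescale} to itself while rescaling the $H^s$-norm by a positive power of $\lambda$; choosing $\lambda$ so that the rescaled data is of size $\epsilon \ll 1$, an $O(1)$ bound over the long existence window translates upon undoing the rescaling to the polynomial bound \eqref{APrioriBoundOnU}, with $C_0$ arising purely from book-keeping of the scaling exponents. On this rescaled problem I would then carry out steps (i)--(iv) with $\tilde{z}$ the $\epsilon$-power series ansatz built from $u$, choosing subsequent orders to cancel secular and nonresonant contributions and thereby force a residual of size $o(\epsilon^3)$ in a suitable Sobolev norm; the residual bound uses only the NLS equation \eqref{NormalizedNLSPrescale} itself and does not need any a priori control of $\|u\|_{H^s}$ beyond what the local theory provides.

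The main obstacle, and the technical heart of the argument, will be the remainder estimate for $R := z - \tilde{z}$. Because the argument is being run in reverse, I cannot borrow a priori control of $\|u\|_{H^s}$ as input, and in the hyperbolic case $\alpha = 1$ the NLS Hamiltonian is indefinite so it cannot be invoked either; every piece of information entering the energy estimate for $R$ must come from the progenitor's own structure. The delicate point is that $R$ satisfies a linearized progenitor equation with coefficients depending on $\tilde{z}$, and these coefficients carry $O(1)$ high-frequency oscillation from the carrier $e^{i(kx+\omega(k)t)}$; a naive Gr\"onwall estimate would give $e^{Ct}$ growth which destroys the long timescale $\epsilon^{-2}$. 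The progenitor's nonlinearity must therefore be designed so that the principal symbol of the linearization around $\tilde{z}$ is skew-adjoint (modulo lower-order terms) in the energy inner product, allowing integration by parts or normal-form cancellations to reduce the effective nonlinear forcing on $R$ to order $\epsilon^3$. Once $\|R\| = o(\epsilon)$ is established on $[0, T/\epsilon^2]$, the triangle inequality yields $\|\tilde{z}\| = O(\epsilon)$, and reading off the amplitude of $\tilde{z}$ recovers $\|u\|_{H^s} = O(1)$ on the long interval. Iterating the subcritical local theory (\cite{CazenaveWeissler90}-style Strichartz arguments with a blow-up alternative) across the polynomial time window then gives the global solution and the quantitative bound \eqref{APrioriBoundOnU}.
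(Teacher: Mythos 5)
Your proposal reproduces the high-level ``reverse justification'' architecture that the paper announces in its introduction, but the two steps that actually make the argument work are missing or wrong.

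First, the reduction of large data to small data by scaling does not work and is not how the paper proceeds. Under $u \mapsto \lambda^{2/(q-1)}u(\lambda^2 t, \lambda x)$ the homogeneous seminorm scales like $\lambda^{s-s_c}\|u\|_{\dot H^s}$, but the $L^2$ component of the inhomogeneous norm scales like $\lambda^{-s_c}\|u\|_{L^2}$, which for $q>3$ blows up as $\lambda \to 0$ and for $q=3$ is invariant; in no case can the full $H^s$ data be made small. In the paper $\epsilon$ is not the size of the data: the modulation $A$ \emph{is} the large NLS solution, and $\epsilon$ is the wave-packet amplitude/concentration parameter of the progenitor ansatz, chosen only at the very end to absorb constants depending on the uncontrolled $H^N$ norm of $A$. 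Correspondingly, the exponent $C_0$ in \eqref{APrioriBoundOnU} is the constant $\mathscr{C}$ obtained by integrating the differential inequality \eqref{DesiredDiffBoundonU}, and $\mathscr{C}$ is fixed by the contraction-mapping estimates for the progenitor (Lemma \ref{NanoLocalWellposedness}), not by bookkeeping of scaling exponents.

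Second, and more fundamentally, your proposed source of the long-time bound on $z$ --- a coercive conserved energy or a skew-adjoint/normal-form structure giving $H^S$ bounds ``with constants not depending on the NLS solution'' --- cannot close the argument. A conserved Hamiltonian for the progenitor does give uniform bounds, but in the rescaled norms $\Lambda^s_\epsilon$ that must be compared to $\|A\|_{H^s}$ these bounds cost negative powers of $\epsilon$ and recover only mass conservation at $s=0$ (Remark \ref{HamIsQuantUseless}); they are qualitatively indispensable but quantitatively useless. The actual mechanism is a contradiction argument: one \emph{assumes} the growth condition \eqref{HsGrowthAssn} on $\|A\|_{H^s}$ and builds into the progenitor the penalization term $\mathcal{N}\,g(\lambda(t))$ of \eqref{zEquationOutline}, whose coefficient depends on the control norm $\lambda(t)=\|\Lambda^s_\epsilon z_{tt}\|^2/(\omega^4\nu^2\|A_0\|_{H^s}^2)$ through a function $g$ with $g'(\lambda)\to\infty$ as $\lambda\to 1$. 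The growth assumption makes $\lambda'(t)$ bounded below, and the penalization flips the sign of the coefficient of $\lambda'$ in \eqref{CartoonODE} when $\lambda$ nears $1$, pinning $\lambda$ away from $1$; this bounds $\|A\|_{H^s}$ and contradicts the assumed growth. That full nonlinearity is what makes the local theory genuinely hard (the equation is non-Lipschitz in $z_{tt}$, forcing the time-discretization and compactness construction of Section 4), and none of this appears in your outline. Incidentally, your concern about exponential Gr\"onwall growth in the remainder estimate is misplaced here: the progenitor nonlinearity is scaled to act like a cubic, so the linearized forcing on $r=z-\tilde z$ is $O(\epsilon^2)$ and plain Gr\"onwall closes over times $\epsilon^{-2}$ as in \cite{KSMCubicNonlinearityLongtimeRemainder}; no normal forms or skew-adjoint cancellations are needed.
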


The paper is organized as follows.  Section 1.2 is a standard technical reduction of Theorem \ref{BigSummaryTheorem} to the special case in which (1) the initial data is assumed to be very smooth, (2) the $H^s$-norm of the solution increases sufficiently rapidly, and (3) the time variable is rescaled to be sufficiently small.  The core of the argument is outlined in Section 1.3, and the progenitor equation that we use is stated there as well.  Section 2 collects relevant facts about the approximate solution to the progenitor equation, including the construction of the approximate solution itself.  The a priori bounds of the error between the true and approximate solutions to the progenitor equation are provided in Section 3 with the existence of a solution to the progenitor equation assumed.  The actual proof of local well-posedness of the progenitor equation is then given in Section 4.\footnote{Because of its technical nature, we recommend that this section be omitted on a first reading.}  Finally, Section 5 completes the argument by establishing the long-time well-posedness of the progenitor equation.

\vspace{0.5cm}

\textbf{Acknowledgements.}  The author would like to thank J. C. Saut for bringing this problem to his attention.  The author is also grateful to B. Dodson, A. Nahmod, B. Pasauder and J. Rauch for discussions about the problem and the method of solution presented here, as well as to J. T. Beale, J. Beichman, J. Marzuola, and S. Wu who in addition provided helpful comments on earlier drafts of this paper.  Finally, the manuscript benefitted from the helpful comments of the referee, who also has the author's gratitude.  This manuscript was in part supported by NSF Grant DMS-1440140 while the author was in residence at the Mathematical Sciences Research Institute during the Fall 2015 semester.

%==============================================================================
%==============================================================================
\subsection{Notation and Basic Estimates}
%==============================================================================
%==============================================================================

Given a set $S$, we denote the characteristic function of $S$ by $\mathbf{1}_S$.  For two complex-valued functions $F, G \in L^2(\mathbb{R}^2)$, define the usual complex $L^2$ inner product $\langle F, G \rangle = \int_{\mathbb{R}^2} \Re(F\overline{G}) \, dx \, dy$.  For $\xi = (\xi_1, \xi_2)$ we denote the Fourier transform by $$\hat{f}(\xi) = \frac{1}{(2\pi)^2}\int_{\mathbb{R}^2} e^{-i(x\xi_1 + y\xi_2)} f(x, y) \, dx \, dy$$   Let $|D|^s$ denote the solid derivative with symbol $|\xi|^s$.  Let $\Lambda^s$ be the Fourier multiplier with symbol $(|\xi|^2 + 1)^{\frac{s}{2}}$.  In this paper the unadorned norm $\|f\|$ will denote the $L^2(\mathbb{R}^2)$ norm.  Define the space $\hat{L}^1(\mathbb{R}^2)$ to be the set of all functions $f \in L^1_{loc}(\mathbb{R}^2)$ for which $\|f\|_{\hat{L}^1} := \|\hat{f}\|_{L^1} < \infty$; we recall that $\hat{L}^1(\mathbb{R}^2)$ is a Banach algebra.  Define the usual Sobolev space $H^s = H^s(\mathbb{R}^2)$ as the completion of $C_0^\infty$ under the norm $\|f\|_{H^s}^2 = \|\Lambda^s f\|^2$.  For some parameter $k > 0$, let $\mathcal{B}_k$ be the Fourier multiplier whose symbol is the characteristic function of the set $\{(\xi_1, \xi_2) \in \mathbb{R}^2 : |(\xi_1 - k, \xi_2)| \leq \frac{k}{2} \}$.   Then we have the classical embeddings $H^2(\mathbb{R}^2) \hookrightarrow \hat{L}^1(\mathbb{R}^2) \hookrightarrow L^\infty(\mathbb{R}^2)$.  Given a Fourier multiplier $M$ with symbol $\hat{M}(\xi)$ and a parameter $k > 0$, we denote by $M_\epsilon$ the multiplier with symbol 
\begin{equation}\label{RescaledMultiplier}
\hat{M_\epsilon}(\xi) = \hat{M}\left(\frac{\xi - (k, 0)}{\epsilon}\right) = \hat{M}\left(\frac{\xi - ke_1}{\epsilon}\right)
\end{equation}
This operator is designed so that given any function $F(x, y)$, we have $$M_\epsilon\left(F(\epsilon x, \epsilon y)e^{ikx}\right) = (MF)(\epsilon x, \epsilon y)e^{ikx}$$

We will analyze some of our power nonlinearities using the standard Moser type estimate:

\begin{theorem}\label{Moser}
Let $s \geq 0$ be given, and suppose that $g_1, \ldots, g_q \in L^\infty(\mathbb{R}^2) \cap \dot{H}^s(\mathbb{R}^2)$.  Then $g_1 g_2 \cdots g_q \in \dot{H}^s$ and
\begin{equation}
\|\,|D|^s(g_1 g_2 \cdots g_q)\| \leq C_{(s, q)} \sum_{j = 1}^q \|\,|D|^s g_j\| \prod_{l \neq j} \|g_l\|_{L^\infty}
\end{equation}
\end{theorem}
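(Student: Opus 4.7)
The plan is to establish the estimate by induction on the number of factors $q$, reducing to the two-factor Kato-Ponce inequality
\[
\||D|^s(fg)\| \leq C_s\bigl(\|f\|_{L^\infty}\||D|^s g\| + \|g\|_{L^\infty}\||D|^s f\|\bigr),
\]
which I would prove via a Littlewood-Paley paraproduct decomposition.

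For the base case $q=2$, I would apply Bony's decomposition $fg = T_f g + T_g f + R(f,g)$, where $T_f g = \sum_k S_{k-3}f \cdot \Delta_k g$ is the low-high paraproduct, $T_g f$ is the symmetric high-low piece, and $R(f,g) = \sum_{|k-l|\leq 2} \Delta_k f \cdot \Delta_l g$ is the high-high remainder, with $\Delta_k$ and $S_k$ denoting the standard Littlewood-Paley projection and its low-frequency partial sum. The Fourier support of each block $S_{k-3}f\cdot\Delta_k g$ lies in an annulus of scale $2^k$, so by almost-orthogonality and Bernstein,
\[
\||D|^s T_f g\|^2 \lesssim \sum_k 2^{2ks}\|S_{k-3}f\|_{L^\infty}^2\|\Delta_k g\|^2 \lesssim \|f\|_{L^\infty}^2 \||D|^s g\|^2,
\]
and symmetrically for $T_g f$. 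The resonant piece $R(f,g)$ requires decomposing the output into dyadic annuli and using Cauchy-Schwarz; since $s \geq 0$, the resulting high-frequency geometric series converges and one recovers the same type of bound.

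For the inductive step $q \geq 3$, I would apply the two-factor estimate to $(g_1\cdots g_{q-1})\cdot g_q$, invoke the inductive hypothesis on $|D|^s(g_1\cdots g_{q-1})$, and use the submultiplicativity $\|g_1\cdots g_{q-1}\|_{L^\infty}\leq \prod_{l=1}^{q-1}\|g_l\|_{L^\infty}$ to fold the result into the stated form; the constant $C_{(s,q)}$ then grows at most linearly in $q$.

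The main technical obstacle is the resonant remainder $R(f,g)$ in the base case, whose output Fourier support is not confined to a single dyadic annulus, so one must sum over output frequency bands with care and verify convergence of the resulting series for all $s \geq 0$. Everything else reduces to standard Littlewood-Paley manipulations, and the induction step is purely algebraic. Since this is a well-known classical estimate, the authors could alternatively just cite a standard reference (such as Kato-Ponce or Taylor's pseudodifferential operator text) rather than reproducing the paraproduct argument in detail.
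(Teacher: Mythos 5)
Your proposal is correct. The paper itself does not prove this statement — it simply cites Tao's \emph{Nonlinear Dispersive Equations} — and your closing remark that one could "just cite a standard reference" is exactly what the author does. Your paraproduct sketch is the standard argument one finds in such references: Bony decomposition for the two-factor Kato--Ponce/Moser estimate, then induction on $q$ by peeling off one factor at a time and using submultiplicativity of the $L^\infty$ norm. Two minor points, neither of which is a gap: for $s=0$ the "geometric series" in the resonant piece does not literally converge, but that case is just H\"older's inequality and needs no decomposition at all (the paraproduct argument is really for $s>0$); and the inductive step multiplies the constant by $C_s$ at each stage, so $C_{(s,q)}$ grows geometrically rather than linearly in $q$ — immaterial here since the theorem only asserts dependence on $s$ and $q$.
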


\begin{proof}  See \cite{TaoNonlinearDispersiveEquations}. \end{proof}

We pause to show that this estimate continues to hold for rescaled derivatives operating on the type of power nonlinearities we consider here.

\begin{lemma}\label{RescaledMoserInequality}
Let $q \in 2\mathbb{N} + 1$ and $s \geq 0$ be given.  Given functions $f_1, \ldots, f_q \in L^\infty(\mathbb{R}^2) \cap \dot{H}^s(\mathbb{R}^2)$ and a parameter $k > 0$, we have the estimate
\begin{equation}
\|\,|D|^s_\epsilon(f_1 \cdots f_{\frac{q + 1}{2}} \overline{f}_{\frac{q + 3}{2}} \cdots \overline{f}_{q})\| \leq C_{(s, q)} \sum_{j = 1}^q \|\,|D|^s_\epsilon f_j\| \prod_{l \neq j} \|f_l\|_{L^\infty}
\end{equation}
\end{lemma}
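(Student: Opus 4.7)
The plan is to reduce this to the standard Moser inequality (Theorem \ref{Moser}) by a change-of-modulation trick that converts $|D|^s_\epsilon$ into the ordinary $|D|^s$ acting on a suitably twisted product.

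First, I would establish the conjugation identity
\[
|D|^s_\epsilon F \;=\; \epsilon^{-s}\, e^{ikx}\, |D|^s\bigl(e^{-ikx} F\bigr)
\]
for any Schwartz $F$. This is a one-line Fourier computation: the symbol of $|D|^s_\epsilon$ is $\epsilon^{-s}|\xi - ke_1|^s$, and the substitution $\eta = \xi - ke_1$ in $\int e^{ix\cdot\xi}\,\epsilon^{-s}|\xi - ke_1|^s \hat{F}(\xi)\,d\xi$ produces exactly the right-hand side. Taking $L^2$ norms gives $\||D|^s_\epsilon F\| = \epsilon^{-s}\||D|^s(e^{-ikx}F)\|$ and, applied to a single $f_j$, $\||D|^s(e^{-ikx}f_j)\| = \epsilon^{s}\||D|^s_\epsilon f_j\|$.

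Next, the key algebraic observation: the product $P := f_1 \cdots f_{(q+1)/2}\,\overline{f}_{(q+3)/2}\cdots\overline{f}_q$ has exactly one more unconjugated than conjugated factor, since $\frac{q+1}{2}-\frac{q-1}{2}=1$. Consequently,
\[
e^{-ikx}\;=\;\prod_{j=1}^{(q+1)/2} e^{-ikx}\;\cdot\;\prod_{j=(q+3)/2}^{q} e^{+ikx},
\]
so distributing this factor across $P$ yields
\[
e^{-ikx} P \;=\; \prod_{j=1}^{(q+1)/2}\bigl(e^{-ikx}f_j\bigr) \;\cdot\; \prod_{j=(q+3)/2}^{q}\overline{\bigl(e^{-ikx}f_j\bigr)}.
\]
Setting $g_j := e^{-ikx}f_j$, the right-hand side is an ordinary product of $q$ functions (some conjugated), to which Theorem \ref{Moser} applies directly (conjugation is harmless, as $|D|^s$ commutes with complex conjugation in norm).

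Finally, I would string the pieces together:
\[
\||D|^s_\epsilon P\| = \epsilon^{-s}\||D|^s(e^{-ikx}P)\| \leq \epsilon^{-s} C_{(s,q)}\sum_{j=1}^{q}\||D|^s g_j\|\prod_{l\neq j}\|g_l\|_{L^\infty},
\]
then use $\|g_l\|_{L^\infty}=\|f_l\|_{L^\infty}$ and $\||D|^s g_j\|=\epsilon^{s}\||D|^s_\epsilon f_j\|$ to see the $\epsilon^{\pm s}$ factors cancel, producing exactly the claimed inequality. There is no genuine obstacle here; the only subtlety is noticing that the mismatch between the number of conjugated and unconjugated factors is precisely what permits the modulation $e^{-ikx}$ to be redistributed evenly, which is why the hypothesis $q\in 2\mathbb{N}+1$ and the specific $(q+1)/2$ vs.\ $(q-1)/2$ split matter.
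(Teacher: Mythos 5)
Your proof is correct and follows essentially the same route as the paper's: conjugating $|D|^s_\epsilon = \epsilon^{-s}e^{ikx}|D|^s e^{-ikx}$, absorbing the modulation into the factors $\psi_j = e^{-ikx}f_j$ (which works precisely because there is one more unconjugated than conjugated factor), and then invoking Theorem \ref{Moser}. You spell out the redistribution of $e^{-ikx}$ across the product more explicitly than the paper does, but the argument is the same.
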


\begin{proof}
Notice that we can write $|D|^s_\epsilon = \epsilon^{-s} e^{ikx} |D|^s e^{-ikx}$.  Introducing $\psi_j = f_j e^{-ikx}$, we have
\begin{align*}
\|\,|D|^s_\epsilon(f_1 \cdots f_{\frac{q + 1}{2}} \overline{f}_{\frac{q + 3}{2}} \cdots \overline{f}_{q})\| & \leq \|\epsilon^{-s}|D|^s(\psi_1 \cdots \psi_{\frac{q + 1}{2}} \overline{\psi}_{\frac{q + 3}{2}}\cdots \overline{\psi}_{q - 1})\| \\
& \leq C_{(s, q)} \sum_{j = 1}^q \|\,\epsilon^{-s}|D|^s \psi_j\| \prod_{l \neq j} \|\psi_l\|_{L^\infty} \\
& = C_{(s, q)} \sum_{j = 1}^q \|\,|D|^s_\epsilon f_j\| \prod_{l \neq j} \|f_l\|_{L^\infty}
\end{align*}
where the Theorem \ref{Moser} was used in the second inequality.
\end{proof}

We will also need another product estimate which, although weaker than that of Lemma \ref{RescaledMoserInequality}, has the advantage that one can choose a particular factor to lie in an $L^2$ space.

\begin{lemma}\label{L1HatEstimate}
Let $q \in 2\mathbb{N} + 1$, $s \geq 0$, $k > 0$, and $j \in \{1, \ldots, q\}$ be given.  Let $\Lambda^s_\epsilon f_j \in L^2(\mathbb{R}^2)$ and for $l \neq j$, let $\Lambda^s_\epsilon f_l \in \hat{L}^1(\mathbb{R}^2)$.  Then $\Lambda^s_\epsilon\left(f_1 \cdots f_{\frac{q + 1}{2}} \overline{f}_{\frac{q + 3}{2}} \cdots \overline{f}_q\right) \in L^2(\mathbb{R}^2)$ and
\begin{equation}
\left\|\Lambda^s_\epsilon\left(f_1 \cdots f_{\frac{q + 1}{2}} \overline{f}_{\frac{q + 3}{2}} \cdots \overline{f}_q\right)\right\|_{L^2} \leq C_{s, q} \ \|\Lambda^s_\epsilon f_j\|_{L^2} \prod_{l \neq j} \|\Lambda^s_\epsilon f_l\|_{\hat{L}^1}
\end{equation}
\end{lemma}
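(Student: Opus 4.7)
The plan is to conjugate by $e^{ikx}$ to pass from $\Lambda^s_\epsilon$ to a Fourier multiplier centered at the origin, and then prove the resulting product estimate directly on the Fourier side by distributing the symbol and invoking the generalized Young convolution inequality. Define
$$\psi_l := \begin{cases} f_l \, e^{-ikx}, & 1 \leq l \leq \tfrac{q+1}{2}, \\ \overline{f}_l \, e^{ikx}, & \tfrac{q+3}{2} \leq l \leq q. \end{cases}$$
Since $\tfrac{q+1}{2} - \tfrac{q-1}{2} = 1$, the exponentials telescope and $\prod_l \psi_l = e^{-ikx} \cdot f_1 \cdots f_{(q+1)/2} \overline{f}_{(q+3)/2} \cdots \overline{f}_q$. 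Let $M^s_\epsilon$ denote the Fourier multiplier with symbol $m(\xi) := (|\xi/\epsilon|^2+1)^{s/2}$, so that $\Lambda^s_\epsilon = e^{ikx} M^s_\epsilon e^{-ikx}$. Because $m$ is even, a change of variable in the Fourier integral yields $\|M^s_\epsilon \psi_l\|_{L^2} = \|\Lambda^s_\epsilon f_l\|_{L^2}$ and $\|M^s_\epsilon \psi_l\|_{\hat L^1} = \|\Lambda^s_\epsilon f_l\|_{\hat L^1}$ for every $l$, including the conjugated factors. Hence it suffices to prove
$$\|M^s_\epsilon(\psi_1 \cdots \psi_q)\|_{L^2} \leq C_{s,q} \|M^s_\epsilon \psi_j\|_{L^2} \prod_{l \neq j}\|M^s_\epsilon \psi_l\|_{\hat L^1}.$$

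Moving to the Fourier side, $|\widehat{\psi_1 \cdots \psi_q}| \leq |\hat\psi_1| \ast \cdots \ast |\hat\psi_q|$. For any decomposition $\xi = \xi_1 + \cdots + \xi_q$, the triangle inequality combined with the elementary fact that $(a_1 + \cdots + a_q)^{s/2} \leq C_{s,q}\sum_l a_l^{s/2}$ for $s \geq 0$ and $a_l \geq 0$ gives the pointwise subadditivity
$$m(\xi) \leq C_{s,q} \sum_{l_0 = 1}^q m(\xi_{l_0}).$$
Distributing inside the convolution yields
$$m(\xi)\,|\widehat{\psi_1 \cdots \psi_q}(\xi)| \leq C_{s,q} \sum_{l_0 = 1}^q K_{l_0}(\xi),$$
where $K_{l_0}$ denotes the $q$-fold convolution $|\hat\psi_1| \ast \cdots \ast |\hat\psi_q|$ with the factor $|\hat\psi_{l_0}|$ replaced by $m\,|\hat\psi_{l_0}|$.

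The proof is completed by applying the generalized Young convolution inequality to each $K_{l_0}$, splitting on whether $l_0 = j$ or $l_0 \neq j$. When $l_0 = j$, place $m\,|\hat\psi_j|$ in the $L^2$ slot (with norm $\|M^s_\epsilon \psi_j\|_{L^2}$) and each $|\hat\psi_l|$ for $l \neq j$ in an $L^1$ slot, using $m \geq 1$ to bound $\|\hat\psi_l\|_{L^1} \leq \|M^s_\epsilon\psi_l\|_{\hat L^1}$. When $l_0 \neq j$, place $m\,|\hat\psi_{l_0}|$ in an $L^1$ slot (norm $\|M^s_\epsilon\psi_{l_0}\|_{\hat L^1}$), $|\hat\psi_j|$ in the $L^2$ slot (norm $\leq \|M^s_\epsilon\psi_j\|_{L^2}$, again since $m \geq 1$), and each remaining $|\hat\psi_l|$ in an $L^1$ slot. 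In either case Young's inequality produces exactly the desired upper bound, and summing over $l_0$ and translating back to the $f_l$'s via the isometries of the first paragraph yields the lemma. The one conceptual subtlety --- not really a serious obstacle --- is that the subadditivity step distributes the ``derivative'' symbol across all $q$ factors rather than landing it selectively on $f_j$; it is precisely the strength of the $\hat L^1$ hypothesis over $L^\infty$ (which provides an $L^1$ estimate of $m\,|\hat\psi_l|$ for free) that allows us to re-absorb the wrong-factor terms into the single desired form.
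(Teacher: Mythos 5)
Your proof is correct and follows essentially the same route as the paper's: pass to the Fourier side, distribute the Sobolev weight over the convolution factors, and apply the generalized Young inequality with the $j$th factor in $L^2$ and the rest in $L^1$. The only cosmetic differences are that you first conjugate by $e^{ikx}$ to center the multiplier and then use subadditivity of the symbol together with $m \geq 1$, whereas the paper keeps the shifted weight $\langle (\xi - ke_1)/\epsilon\rangle^s$ and distributes it multiplicatively via iterated Peetre's inequality.
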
 

\begin{proof}
Introduce $\psi_l = f_l e^{ikx}$ as well as the repeated convolution operator $\mathop{\Asterisk_{l = 1}^m} f_l := f_1 * f_2 * \cdots * f_m$; then we have 
\begin{align*}
& \quad\; \|\Lambda^s_\epsilon(f_1 \cdots f_{\frac{q + 1}{2}} \overline{f}_{\frac{q + 3}{2}} \cdots \overline{f}_q)\| \\
& = \left\|\left\langle\frac{\xi - ke_1}{\epsilon}\right\rangle^s \left( \mathop{\Asterisk_{l = 1}^{\frac{q + 1}{2}}} \hat{f}_l \right) * \left(\mathop{\Asterisk_{l = {\frac{q + 3}{2}}}^q} \hat{\overline{f}_l} \right)\right\| \\
& = \left\|\left\langle\frac{\xi - ke_1}{\epsilon}\right\rangle^s \left( \mathop{\Asterisk_{l = 1}^{\frac{q + 1}{2}}} \hat{\psi}_l(\xi - ke_1) \right) * \left(\mathop{\Asterisk_{l = {\frac{q + 3}{2}}}^q} \overline{\hat{\psi}_l}(-\xi - ke_1) \right)\right\|
\end{align*}
Then we have using Peetre's Inequality $\langle x \rangle^s \leq C_s \langle y \rangle^s \langle x - y \rangle^s$ repeatedly that this last term is bounded by
\begin{align*}
& \leq C_{s, q} \left\| \left( \mathop{\Asterisk_{l = 1}^{\frac{q + 1}{2}}} \left\langle\frac{\xi - ke_1}{\epsilon}\right\rangle^s \hat{\psi}_l(\xi - ke_1) \right) * \left(\mathop{\Asterisk_{l = {\frac{q + 3}{2}}}^q} \left\langle\frac{\xi + ke_1}{\epsilon}\right\rangle^s \overline{\hat{\psi}_l}(-\xi - ke_1) \right)\right\| \\
& = C_{s, q} \left\| \left( \mathop{\Asterisk_{l = 1}^{\frac{q + 1}{2}}} \left\langle\frac{\xi - ke_1}{\epsilon}\right\rangle^s \hat{\psi}_l(\xi - ke_1) \right) * \left(\mathop{\Asterisk_{l = {\frac{q + 3}{2}}}^q} \overline{\left\langle\frac{-\xi - ke_1}{\epsilon}\right\rangle^s \hat{\psi}_l}(-\xi - ke_1) \right)\right\| \\
& = C_{s, q} \left\| \left( \mathop{\Asterisk_{l = 1}^{\frac{q + 1}{2}}} \left\langle\frac{\xi - ke_1}{\epsilon}\right\rangle^s \hat{\psi}_l(\xi - ke_1) \right) * \left(\mathop{\Asterisk_{l = {\frac{q + 3}{2}}}^q} \overline{\left\langle\frac{\xi - ke_1}{\epsilon}\right\rangle^s \hat{\psi}_l}(\xi - ke_1) \right)\right\|
\end{align*}
Now estimating using Young's Inequality in which the $j$th term is taken in $L^2$ and the others in $L^1$, we have that this is controlled by
\begin{align*}
C_{s, q} \left\| \left\langle \frac{\xi - ke_1}{\epsilon} \right\rangle^s \hat{f}_j \right\|_{L^2} \; \prod_{l \neq j} \left\| \left\langle \frac{\xi - ke_1}{\epsilon} \right\rangle^s \hat{f}_l \right\|_{L^1},
\end{align*}
which is just the frequency side version of the desired estimate.
\end{proof}

%==============================================================================
%==============================================================================
\subsection{Reductions to Special Cases}
%==============================================================================
%==============================================================================

In this section we make a number of reductions to special cases.  Let $s > s_c$ be fixed in what follows.

%==============================================================================
%==============================================================================
\subsubsection{Local Well-Posedness Theory of HNLS}
%==============================================================================
%==============================================================================

The local well-posedness theory of \eqref{NormalizedNLSPrescale} in subcritical Sobolev spaces is standard; we record the following version of it here:

\begin{proposition}\label{HNLSLocalWellposedness}
\cite{CazenaveSemilinearSchrodingerEquations}  There exists a unique maximal solution $u \in C([0, T_{max}) : H^s)$ to \eqref{NormalizedNLSPrescale} that enjoys the following properties:
\begin{itemize}
\item{(Blow-up Alternative)  If $T_{max} < \infty$, then $\|u(t)\|_{H^s} \to \infty$ as $t \to T_{max}$.  In addition there is a constant $C_{s, q}$ for which $T_{max} \geq C_{s, q}\|u_0\|_{H^s}^{1 - q}$.}
\item{(Continuous Dependence on Initial Data)  Suppose that $u^{(n)}_0 \to u_0$ in $H^s$.  Let $u$, $u^{(n)}$ be the solutions to \eqref{NormalizedNLSPrescale} with initial data $u_0$, $u^{(n)}_0$, respectively.  Denote by $T^{(n)}_{max}$ the maximal blow-up time of $u^{(n)}$.  Then for any closed interval $I \subset [0, T_{max})$ we have $I \subset [0, T^{(n)}_{max})$ for sufficiently large $n$, and $u^{(n)} \to u$ in $C(I : H^s)$.}  
\item{(Persistence of Regularity) If in addition we have $u_0 \in H^{s^\prime}$ for some $s^\prime > s$, then $u \in C([0, T_{max}), H^{s^\prime})$.}
\item{(Mass Conservation) $\|u(t)\| = \|u_0\|$ whenever $t \in [0, T_{max})$.}
\end{itemize}
\end{proposition}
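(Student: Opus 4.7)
The plan is to follow the standard semilinear Schrödinger well-posedness framework (as developed in Cazenave's book), adapted to allow the possibly indefinite signature when $\alpha = 1$ and the higher-degree nonlinearity. The main analytic input is that even when $\alpha = 1$, the symbol $-\alpha\xi_1^2 + \xi_2^2$ of the spatial operator $L := \alpha \partial_x^2 - \partial_y^2$ is a non-degenerate quadratic form; stationary phase (or the explicit factorization of the kernel) therefore yields the dispersive estimate $\|e^{itL}f\|_{L^\infty} \lesssim |t|^{-1}\|f\|_{L^1}$, which in turn produces the full suite of Strichartz estimates on $\mathbb{R}^2$ for admissible pairs $(a,r)$ with $\tfrac{2}{a} + \tfrac{2}{r} = 1$, $2 \leq r < \infty$.

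First I would recast \eqref{NormalizedNLSPrescale} in Duhamel form
\begin{equation*}
u(t) = e^{itL}u_0 - i\int_0^t e^{i(t-s)L}\bigl(u(s)|u(s)|^{q-1}\bigr)\,ds
\end{equation*}
and run a contraction mapping argument in a complete metric space of the form $X_T = C([0,T];H^s) \cap L^a([0,T]; W^{s,r})$ for an admissible pair $(a,r)$ chosen so that $W^{s,r}$ embeds into $L^\infty$ at the subcritical scaling level $s > s_c = 1 - \tfrac{2}{q-1}$. The nonlinearity is estimated via Theorem \ref{Moser} (yielding $\|\,u|u|^{q-1}\|_{H^s} \lesssim \|u\|_{L^\infty}^{q-1}\|u\|_{H^s}$) together with the fractional chain rule for the $W^{s,r}$ norm. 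The resulting contraction estimate closes on a time interval whose length is at least a fixed multiple of $\|u_0\|_{H^s}^{1-q}$, which is precisely the claimed lower bound on $T_{max}$.

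The blow-up alternative is then the usual extension argument: if $\|u(t)\|_{H^s}$ remained bounded as $t \to T_{max} < \infty$, then restarting the fixed-point argument at times close to $T_{max}$ with the uniform lower bound on the local existence time would extend the solution past $T_{max}$, contradicting maximality. Continuous dependence follows by applying Strichartz estimates to the difference $u - u^{(n)}$, using the pointwise bound $\bigl||z|^{q-1}z - |w|^{q-1}w\bigr| \lesssim (|z|^{q-1} + |w|^{q-1})|z-w|$ together with its $H^s$-analogue, and closing with a Gronwall argument on each closed subinterval of $[0,T_{max})$. Persistence of regularity follows because the contraction at the higher index $s'$ produces a solution that must coincide with the $H^s$ solution by uniqueness, and a Gronwall-in-$H^{s'}$ estimate shows that $\|u(t)\|_{H^{s'}}$ remains finite as long as $\|u(t)\|_{H^s}$ does; hence the $H^{s'}$ maximal time is no smaller than $T_{max}$.

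Mass conservation is obtained by a density argument. For smooth solutions one computes directly
\begin{equation*}
\tfrac{d}{dt}\|u\|^2 = 2\,\Re\langle u_t, u\rangle = -2\,\Im\langle Lu - u|u|^{q-1}, u\rangle = 0,
\end{equation*}
since $L$ is self-adjoint and $\langle u|u|^{q-1}, u\rangle = \|u\|_{L^{q+1}}^{q+1}$ is real; the identity then extends to $H^s$ initial data by approximating $u_0$ in $H^s$ by smooth data, propagating via persistence of regularity, and passing to the limit using continuous dependence. The only technical subtlety in importing Cazenave's framework wholesale is verifying the Strichartz estimates in the hyperbolic case $\alpha = 1$; this reduces to the non-degeneracy of the quadratic symbol already noted, and the remainder of the argument proceeds by established manipulations.
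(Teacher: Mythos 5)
Your proposal is correct and follows essentially the same route as the paper, which proves this proposition simply by citing the standard Cazenave--Weissler/Strichartz contraction framework and its adaptation by Ghidaglia--Saut to the indefinite signature $\alpha = 1$ — the non-degeneracy of the quadratic symbol yielding the $|t|^{-1}$ dispersive decay is exactly the point of that adaptation. The only remark worth adding is the one the paper itself makes: the persistence of regularity up to arbitrary $s' > s$ relies on $q$ being odd (so that $u|u|^{q-1}$ is a polynomial in $u, \overline{u}$), which your fractional chain rule and Gronwall-in-$H^{s'}$ step uses implicitly.
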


\begin{proof}
As a reference, see Theorems 4.12.1 and 5.7.1 of \cite{CazenaveSemilinearSchrodingerEquations}.  The local well-posedness in the case $\alpha = -1$ goes back to \cite{CazenaveWeissler90}, and the local well-posedness in the case $\alpha = 1$ follows from the work \cite{GhidagliaSautDaveyStewartson} which adapted the ideas of \cite{CazenaveWeissler90} in the mixed signature setting.  Note that the relatively strong persistence of regularity result here depends on the fact that we study NLS equations with power nonlinearities of odd degree.
\end{proof}

%==============================================================================
%==============================================================================
\subsubsection{Rescaling the Time Variable}
%==============================================================================
%==============================================================================

We write $\mathfrak{u}(\mathfrak{x}, \mathfrak{y}, \mathfrak{t})$ for a solution to \eqref{NormalizedNLSPrescale} in order to reserve the symbols $(x, y, t)$ for the space and time variables of the progenitor equation.  We will rescale $\mathfrak{u}$ using two parameters $p > 0$ and $\omega > 0$ to be chosen later as follows:
\begin{equation}\label{Rescaling}
\mathfrak{u}(\mathfrak{x}, \mathfrak{y}, \mathfrak{t}) = A\left(\frac12 \sqrt{p|2 - p|} \mathfrak{x}, \sqrt{p} \mathfrak{y}, 2 \omega^{\frac{4}{p} - 1} \mathfrak{t}\right) =: A(X, Y, T)
\end{equation}
Notice that with this scaling $\|A(T)\|_{L^\infty} = \|\mathfrak{u}(\mathfrak{t})\|_{L^\infty}$.  Choose $0 < p < 2$ when $\alpha = 1$ and $p > 2$ when $\alpha = -1$; in these cases the rescaled solution $A(X, Y, T)$ now satisfies
\begin{equation}\label{NormalizedNLS}
\begin{cases}
\displaystyle 2i\omega^{\frac{4}{p} - 1} A_T + \frac14 p(2 - p) A_{XX} - pA_{YY} + A|A|^{q - 1} = 0 \\
A(0) = A_0
\end{cases}
\end{equation}

%==============================================================================
%==============================================================================
\subsubsection{Reduction to the Case of Smooth Initial Data and Sufficiently Rapid $H^s$ Norm Growth.}
%==============================================================================
%==============================================================================

We will show that Theorem \ref{BigSummaryTheorem} holds by proving that $\|u(\mathfrak{t})\|_{H^s}$ satisfies an appropriate differential inequality.  Formally one can use \eqref{NormalizedNLSPrescale} to derive the following expression for this derivative:
\begin{align}\label{DerivativeOfHsNormOfU}
\frac{d}{d\mathfrak{t}}(\mathfrak{t}\|\mathfrak{u}(\mathfrak{t})\|_{H^s}^2) = \|\mathfrak{u}(\mathfrak{t})\|_{H^s}^2 + 2\mathfrak{t}\langle \Lambda^s \mathfrak{u}(\mathfrak{t}), \Lambda^s (i\mathfrak{u}(\mathfrak{t})|\mathfrak{u}(\mathfrak{t})|^{q - 1}) \rangle
\end{align}
In order to prove Theorem \ref{BigSummaryTheorem}, it therefore suffices to show the bound
\begin{equation}\label{DesiredDiffBoundonU}
\|\mathfrak{u}(\mathfrak{t})\|_{H^s} + 2\mathfrak{t}\langle \Lambda^s \mathfrak{u}(\mathfrak{t}), \Lambda^s( i\mathfrak{u}(\mathfrak{t})|\mathfrak{u}(\mathfrak{t})|^{q - 1} ) \rangle \leq \mathscr{C}_p\|\mathfrak{u}(\mathfrak{t})\|_{H^s}^2 \qquad \mathfrak{t} \in [0, \mathbf{T}]
\end{equation}
Applying the rescaling \eqref{Rescaling} to the bound \eqref{DesiredDiffBoundonU} gives the equivalent condition
\begin{align}\label{DerivativeOfHsNormOfA}
\frac{d}{dT}(T\|A(T)\|_{H^s}^2) & = \|A(T)\|_{H^s}^2 + 2T\langle \Lambda^s A(T), \Lambda^s (iA(T)|A(T)|^{q - 1}) \rangle \notag \\
& \leq \mathscr{C}\|A(T)\|_{H^s}^2
\end{align}
on the rescaled time interval $[0, 2 \omega^{\frac{4}{p} - 1} \mathbf{T}]$, where now the constant $\mathscr{C}$ is independent of $p$.
\begin{remark}\label{WhyRescaleWhyThisBound}
The reason that the particular estimate \eqref{DesiredDiffBoundonU} was chosen was to introduce a small parameter into later estimates using a rescaling that leave our bounds invariant.  In later estimates needed to show the local well-posedness of our choice of progenitor equation (c.f. the proof of Lemma \ref{NanoLocalWellposedness}) it will therefore be advantageous to choose $\omega$ so as to take advantage of the factor of $T \leq 2\omega^{\frac{4}{p} - 1} \mathbf{T}$ in the expression $2T\langle |D|^s A(T), |D|^s (iA(T)|A(T)|^{q - 1})\rangle$.  Since we will take either $p = 1, 3$ in the sequel, the factor $\omega^{\frac{4}{p} - 1}$ may always be chosen small provided $\omega \ll 1$ is chosen sufficiently small.
\end{remark}

These manipulations of the time derivative of $\|A\|_{H^s}$ only makes sense provided the solution is classical.  In fact, later in the argument, we will require that even higher derivatives of $A$ be defined as well.  Therefore we state the following standard mollification result:

\begin{lemma}\label{MollificationArgument}
Let $s > s_c$ and $N = N(s) > s + 21$ be given.  Suppose that whenever $\mathfrak{u}_0 \in H^N$, for any time $\mathbf{T} > 0$ for which the corresponding solution to \eqref{NormalizedNLSPrescale} $\mathfrak{u} \in C([0, \mathbf{T}] : H^N)$, the solution $\mathfrak{u}$ satisfies the bound 
\begin{equation}\label{SmoothBound}
\|\mathfrak{u}(\mathfrak{t})\|_{H^s} \leq \|\mathfrak{u}_0\|_{H^s} M(\|\mathfrak{u}_0\|_{H^s}, s, q, \mathfrak{t}) \qquad 0 \leq T \leq \mathbf{T}
\end{equation}
where $M$ is increasing in its first argument.  Then \eqref{SmoothBound} in fact holds when $\mathfrak{u}_0 \in H^s$ and for any time $\mathbf{T} > 0$ for which the corresponding solution to \eqref{NormalizedNLS} $\mathfrak{u} \in C([0, \mathbf{T}] : H^s)$.
\end{lemma}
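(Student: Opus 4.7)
The plan is a standard mollification argument that upgrades the bound from smooth data to $H^s$ data by exploiting the continuous dependence and persistence of regularity provided by Proposition~\ref{HNLSLocalWellposedness}. Given $\mathfrak{u}_0 \in H^s$ with corresponding solution $\mathfrak{u} \in C([0,\mathbf{T}]:H^s)$, I would first regularize by setting $\mathfrak{u}_0^{(n)} := P_{\leq n}\mathfrak{u}_0$, where $P_{\leq n}$ is a smooth Fourier-multiplier cutoff to frequencies $|\xi|\leq n$. Such $\mathfrak{u}_0^{(n)}$ lies in $H^{N}$ for every $N$, converges to $\mathfrak{u}_0$ in $H^s$, and satisfies the monotone comparison $\|\mathfrak{u}_0^{(n)}\|_{H^s} \leq \|\mathfrak{u}_0\|_{H^s}$ uniformly in $n$, which is the key feature used at the end.

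Next I would propagate these approximations through the flow. Let $\mathfrak{u}^{(n)}$ denote the maximal $H^s$-solution of \eqref{NormalizedNLSPrescale} with initial data $\mathfrak{u}_0^{(n)}$, and let $T_{max}^{(n)}$ be its $H^s$-blow-up time. Since $\mathfrak{u} \in C([0,\mathbf{T}]:H^s)$ on a closed interval, the blow-up alternative in Proposition~\ref{HNLSLocalWellposedness} shows that the $H^s$-maximal time $T_{max}$ for $\mathfrak{u}$ strictly exceeds $\mathbf{T}$. Applying the continuous-dependence clause to the closed interval $I=[0,\mathbf{T}]\subset[0,T_{max})$ then gives $T_{max}^{(n)}>\mathbf{T}$ and $\mathfrak{u}^{(n)}\to\mathfrak{u}$ in $C([0,\mathbf{T}]:H^s)$ for all sufficiently large $n$. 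Since $\mathfrak{u}_0^{(n)}\in H^N$, persistence of regularity now upgrades each $\mathfrak{u}^{(n)}$ to a solution in $C([0,\mathbf{T}]:H^N)$, so that the standing hypothesis of the lemma is applicable to it.

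To close, apply the hypothesis to each $\mathfrak{u}^{(n)}$ to obtain
\[
\|\mathfrak{u}^{(n)}(\mathfrak{t})\|_{H^s} \leq \|\mathfrak{u}_0^{(n)}\|_{H^s}\, M(\|\mathfrak{u}_0^{(n)}\|_{H^s}, s, q, \mathfrak{t}) \leq \|\mathfrak{u}_0\|_{H^s}\, M(\|\mathfrak{u}_0\|_{H^s}, s, q, \mathfrak{t})
\]
for every $\mathfrak{t}\in[0,\mathbf{T}]$, where the second inequality uses $\|\mathfrak{u}_0^{(n)}\|_{H^s}\leq\|\mathfrak{u}_0\|_{H^s}$ together with the monotonicity of $M$ in its first argument. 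Sending $n\to\infty$, the left-hand side converges uniformly on $[0,\mathbf{T}]$ to $\|\mathfrak{u}(\mathfrak{t})\|_{H^s}$ by the $C([0,\mathbf{T}]:H^s)$-convergence already established, while the right-hand side is independent of $n$, yielding \eqref{SmoothBound} for the rough solution $\mathfrak{u}$. The only step requiring any real care is the propagation of higher regularity to the full interval $[0,\mathbf{T}]$ rather than to some possibly shrinking $H^N$-lifespan; this is handled cleanly because the persistence-of-regularity clause of Proposition~\ref{HNLSLocalWellposedness} is tied to the $H^s$-maximal time, which we have already arranged to exceed $\mathbf{T}$ for all large $n$. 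Using the frequency cutoff to enforce $\|\mathfrak{u}_0^{(n)}\|_{H^s}\leq\|\mathfrak{u}_0\|_{H^s}$ also lets us dispense with any appeal to continuity of $M$ in its first argument, which the statement does not assume.
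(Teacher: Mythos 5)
Your proposal is correct and follows essentially the same route as the paper: regularize the data (the paper uses a convolution mollifier where you use a frequency truncation, an immaterial difference), use continuous dependence to keep the approximating solutions on all of $[0,\mathbf{T}]$ and to pass to the limit, use persistence of regularity to place them in $H^N$, and close with the monotonicity of $M$ together with $\|\mathfrak{u}_0^{(n)}\|_{H^s}\leq\|\mathfrak{u}_0\|_{H^s}$. If anything, you are slightly more explicit than the paper about first confirming via the blow-up alternative that the approximate solutions' maximal times exceed $\mathbf{T}$ before invoking persistence of regularity.
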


\begin{proof}
Let $\mathfrak{u}_0^\iota$ be the standard mollifier of $\mathfrak{u}_0$, so that $\|\mathfrak{u}_0^\iota\|_{H^N} \leq C(\iota)$, $\|\mathfrak{u}_0^\iota\|_{H^s} \leq \|\mathfrak{u}_0\|_{H^s}$, and $\|\mathfrak{u}_0 - \mathfrak{u}_0^\iota\|_{H^s} \to 0$ as $\iota \to 0$.  Let $\mathfrak{u}^\iota$ be the solution to \eqref{NormalizedNLS} with $\mathfrak{u}^\iota(0) = \mathfrak{u}_0^\iota$.  By the persistence of regularity in Proposition \ref{HNLSLocalWellposedness} we have $\mathfrak{u}^\iota \in C([0, \mathbf{T}], H^{N(s)})$.  By hypothesis we have the bound $\|\mathfrak{u}^\iota(T)\|_{H^s} \leq \|\mathfrak{u}_0^\iota\|_{H^s} M(\|\mathfrak{u}_0^\iota\|_{H^s}, s, q, \mathfrak{t})$ for all $0 \leq T \leq \mathbf{T}$.  But then by the continuous dependence on initial data of Proposition \ref{HNLSLocalWellposedness} we have for all $0 \leq T \leq \mathbf{T}$ that
\begin{align*}
\|\mathfrak{u}(\mathfrak{t})\|_{H^s} & \leq \liminf_{\iota \to 0} \left(\|\mathfrak{u}^\iota(\mathfrak{t})\|_{H^s} + \|\mathfrak{u}(\mathfrak{t}) - \mathfrak{u}^\iota(\mathfrak{t})\|_{H^s}\right) \\
& \leq \liminf_{\iota \to 0} \|\mathfrak{u}_0^\iota\|_{H^s} M(\|\mathfrak{u}_0^\iota\|_{H^s}, s, q, \mathfrak{t}) \\
& \leq \|\mathfrak{u}_0\|_{H^s} M(\|\mathfrak{u}_0\|_{H^s}, s, q, \mathfrak{t}).
\end{align*}
\end{proof}

\begin{remark}
Lemma \ref{MollificationArgument} is the only place in the entire argument that requires the fact that $s$ is a subcritical Sobolev index of \eqref{NormalizedNLSPrescale}.  In particular, this result fails at the critical regularity $s = s_c$ since uniqueness holds in general only in a strict subspace of $C([0, \mathbf{T}] : H^{s_c})$.
\end{remark}

The contradiction argument forming the core of our argument is given in the

\begin{proof}[Beginning of the proof of Theorem \ref{BigSummaryTheorem}.]
First, let us demonstrate how a bound of the form \eqref{DesiredDiffBoundonU} on some $[0, \mathbf{T}]$ implies Theorem \ref{BigSummaryTheorem}.  First, note that from the local theory of Proposition \ref{HNLSLocalWellposedness} we have a solution $\mathfrak{u}(\mathfrak{t})$ defined on some interval of time $T_{loc}$ which is the first time at which $\|\mathfrak{u}(\mathfrak{t})\|_{H^s} = 2\|\mathfrak{u}_0\|_{H^s}$ with $T_{loc} \geq C_{s, q}\|\mathfrak{u}_0\|_{H^s}^{1 - q}$.  Now suppose that $\mathfrak{u}(\mathfrak{t})$ has a finite maximal time of existence $T_{max} > T_{loc}$.  Integrating the ODE \eqref{DesiredDiffBoundonU} on each subinterval $[T_{loc}, \mathbf{T}]$ with $\mathbf{T} < T_{max}$ yields the inequality
\begin{equation*}
\limsup_{\mathfrak{t} \to T_{max}^-} \|\mathfrak{u}(\mathfrak{t})\|_{H^s}^2 \leq 4\|\mathfrak{u}_0\|_{H^s}^2 \left(\frac{\mathfrak{t}}{T_{loc}}\right)^{C_0} \leq C(s, q, \|\mathfrak{u}_0\|_{H^s}) \|u_0\|_{H^s}^2 \mathfrak{t}^{C_0}
\end{equation*}
But the blow-up criterion of Proposition \ref{HNLSLocalWellposedness} now implies that $T_{max} = +\infty$.  Since the a priori bound above can be applied to any $[T_{loc}, \mathbf{T}]$ for $\mathbf{T} > T_{loc}$, the bound \eqref{APrioriBoundOnU} indeed holds for all time.

It remains to prove the bound \eqref{DesiredDiffBoundonU}.  As in the last section, by rescaling it suffices to show \eqref{DerivativeOfHsNormOfA}.  Suppose that \eqref{DerivativeOfHsNormOfA} did not hold; then there is some time $T_* \in (0, \mathbf{T})$ for which $$\|A(T_*)\|_{H^s}^2 + 2T\langle \Lambda^s A(T_*), \Lambda^s( iA(T_*)|A(T_*)|^{q - 1} ) \rangle \geq 2\mathscr{C} \|A(T_*)\|_{H^s}^2;$$ then since $A \in C([0, \mathbb{T}] : H^N)$ we have through \eqref{NormalizedNLS} that $$T \mapsto \|A(T)\|_{H^s}^2 + 2T\langle \Lambda^s A(T), \Lambda^s( iA(T)|A(T)|^{q - 1} ) \rangle$$ is a continuous map, and so there exists an interval $[T_1, T_2] \subset (0, \mathbf{T})$ of positive length on which $$\|A(T)\|_{H^s}^2 + 2T\langle \Lambda^s A(T), \Lambda^s( iA(T)|A(T)|^{q - 1} ) \rangle \geq \mathscr{C} \|A(T)\|_{H^s}^2, \qquad T \in [T_1, T_2].$$  
Now suppose we grant the following
\begin{claim}
Let $\nu \in (1, 2)$ be given.  Suppose that $A(T) \in C([T_1, T_2] : H^N)$ is a solution to \eqref{NormalizedNLS} that for all $T \in [T_1, T_2]$ satisfies $$\|A(T)\|_{H^s}^2 + 2T\langle \Lambda^s A(T), \Lambda^s( iA(T)|A(T)|^{q - 1} ) \rangle \geq \mathscr{C} \|A(T)\|_{H^s}^2.$$  Then $\|A(T)\|_{H^s} \leq (2\nu - 1)\|A(T_1)\|_{H^s}$ for all $T_1 \leq T \leq T_2$.
\end{claim}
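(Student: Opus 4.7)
The plan is to prove this by embedding $A$ as the modulation of a wave packet solution to the progenitor equation, and then transferring a priori bounds on the progenitor solution back to $A$. Given $A \in C([T_1, T_2] : H^N)$ satisfying the stated lower bound, I would use $A$ to build an approximate wave packet $\tilde z$ to the progenitor equation (via the machinery of Section 2, which will require the higher regularity $N > s + 21$ supplied by Lemma \ref{MollificationArgument}), pass to a nearby true progenitor solution $z$ whose existence and long-time boundedness come from Sections 4 and 5 and are \emph{independent} of the size of $A$, and finally use the approximation error estimates of Section 3 to transfer the bound on $z$ back to a bound on $\tilde z$, and hence on $\|A(T)\|_{H^s}$.

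The role of the lower-bound hypothesis is to force $\|A(T)\|_{H^s}^2$ to be large enough that a comparison with a wave packet of amplitude $\epsilon$ is informative; formally, the hypothesis
\begin{equation*}
\|A(T)\|_{H^s}^2 + 2T\langle \Lambda^s A(T), \Lambda^s(iA(T)|A(T)|^{q-1})\rangle \geq \mathscr{C}\|A(T)\|_{H^s}^2
\end{equation*}
together with \eqref{DerivativeOfHsNormOfA} implies that $T \mapsto T\|A(T)\|_{H^s}^2$ is nondecreasing on $[T_1, T_2]$. Consequently, if the conclusion failed then by continuity there would be a first time $T_* \in (T_1, T_2]$ at which $\|A(T_*)\|_{H^s} = (2\nu - 1)\|A(T_1)\|_{H^s}$, and it suffices to contradict this equality at $T_*$.

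The steps, in order: (i) choose the modulation parameter $\epsilon$ along with $p \in \{1, 3\}$, $\omega \ll 1$, and the wave number $k$ in terms of $\|A(T_1)\|_{H^s}$ and $\nu$ so that the wave packet built from $A(T_1)$ has unit size in the natural energy norm of the progenitor equation; (ii) construct the approximate solution $\tilde z$ of the progenitor equation from $A$ using the formal wave-packet ansatz of Section 2, so that $\tilde z$ satisfies the progenitor equation up to a residual of order $o(\epsilon^3)$ in the relevant norm; (iii) invoke the long-time well-posedness result of Section 5 to obtain a true progenitor solution $z$ with $z(T_1) = \tilde z(T_1)$ defined on the slow-time interval corresponding to $[T_1, T_*]$, together with an a priori progenitor-energy bound of order $1$ that is independent of $A$; (iv) apply the error estimates of Section 3 (using the rescaled Moser and $\hat L^1$ product estimates of Lemmas \ref{RescaledMoserInequality} and \ref{L1HatEstimate}) to conclude $\|z - \tilde z\| = o(\epsilon)$ uniformly on the interval; and (v) combine (iii) and (iv) to bound $\tilde z$, invert the construction of (i) to translate this into an upper bound on $\|A(T_*)\|_{H^s}$, and contradict $\|A(T_*)\|_{H^s} = (2\nu - 1)\|A(T_1)\|_{H^s}$.

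The main obstacle will be closing the loop across steps (iii)--(iv) uniformly in $\epsilon \to 0$: the long-time well-posedness in Section 5 must yield bounds on $z$ that are genuinely independent of $\|A\|_{H^s}$; the Section 3 approximation error must be strictly better than $(\nu - 1)\epsilon$ on the full slow-time interval so that the factor $2\nu - 1$ comfortably absorbs the discrepancy between $z$ and $\tilde z$; and the Section 4 local theory must apply with constants uniform in $\epsilon$. The parameter $\nu \in (1, 2)$ acts as a tuning constant matching these scales: $\nu > 1$ supplies slack to absorb the approximation error, while $\nu < 2$ keeps the slow-time window short enough that the progenitor local theory of Section 4 can be iterated finitely many times across it without the accumulated constants swamping the estimate.
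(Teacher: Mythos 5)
Your overall architecture --- build $\tilde z$ from $A$, produce a true solution $z$ of the progenitor equation whose long-time existence does not consult the later dynamics of $A$, then transfer the bound back through the remainder estimates --- is the paper's route. But the two mechanisms you describe for making this close are not the ones the paper uses, and as stated they would fail. The long-time theory of Section 5 does \emph{not} deliver ``an a priori progenitor-energy bound of order $1$ independent of $A$'': the almost-conserved Hamiltonian of \eqref{zEquationOutline} is explicitly quantitatively useless here, since its bounds scale like $\epsilon^{-(s+1)(q+1)}$ and cannot be inverted to control $\|A\|_{H^s}$ (see Lemma \ref{AlmostConserved} and Remark \ref{HamIsQuantUseless}). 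What Proposition \ref{zLongTimeWellPosed} actually delivers is the bound $\lambda(t)<1$ on the control norm \eqref{ControlNorm}, i.e. $\omega^{-2}\|\Lambda^s_\epsilon z_{tt}(t)\| < \nu\|A(T_1)\|_{H^s}$ --- a bound on the \emph{second time derivative} of $z$ in the $\Lambda^s_\epsilon$-weighted $L^2$ norm, normalized by $\|A(T_1)\|_{H^s}$ (so not independent of $A$, only of its dynamics for $T>T_1$ and of $\|A\|_{H^N}$). Since $\tilde z_{tt} = -\omega^2\epsilon A e^{i\phi} + O(\epsilon^2)$, the quantity $\omega^{-2}\|\Lambda^s_\epsilon \tilde z_{tt}\|$ recovers $\|A(\epsilon^2 t)\|_{H^s}$ up to $O(\epsilon)$, and the remainder estimate you need is the $O(\epsilon^3)$ bound on $\|\Lambda^s_\epsilon(z_{tt}-\tilde z_{tt})\|$ from Proposition \ref{RemainderEstimatesInOutline}, not merely $\|z-\tilde z\| = o(\epsilon)$. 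The constant $2\nu-1$ then falls out as $\nu$ (from $\lambda<1$) plus an error $C(k,\|A\|_{H^N})\epsilon$ absorbed by choosing $\epsilon$ so small that $C\epsilon < (\nu-1)\|A(T_1)\|_{H^s}$; $\nu$ has nothing to do with the length of the slow-time window or with how many times the Section 4 local theory is iterated.

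Relatedly, you misidentify the role of the hypothesis. It is not there to make $\|A\|_{H^s}$ ``large enough that comparison with a wave packet of amplitude $\epsilon$ is informative,'' and the monotonicity of $T\|A(T)\|_{H^s}^2$ you extract from it is never used (nor is your first-time-$T_*$ bootstrap needed: the bound holds for a.e.\ $T$ directly and extends by continuity). The hypothesis is Assumption \ref{AssumptionsOnA}, which is the fuel for the ODE \eqref{ODEBoundOnLambda}: it yields a lower bound on $\langle \Lambda^{2s}A, iA|A|^{q-1}\rangle/\|A(T_1)\|_{H^s}^2$ of size $(\mathscr{C}-1)/(8T_2)$ that forces $\lambda'(t)<0$ whenever $g'(\lambda)$ is large, and this is the only reason the penalized equation \eqref{zEquationOutline} remains well-posed over the full $O(\epsilon^{-2})$ time scale. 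Without identifying this, you cannot ``invoke the long-time well-posedness result of Section 5'' --- its hypothesis is precisely this growth condition.
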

Since here we have assumed that the hypotheses of the claim hold, we conclude that for any $\nu \in (1, 2)$ we please, $\|A(T)\|_{H^s} \leq (2\nu - 1)\|A(T_1)\|_{H^s}$.  However, if we integrate \eqref{DerivativeOfHsNormOfA}, we find that 
$\|A(T_2)\|_{H^s}^2 \geq \|A(T_1)\|_{H^s}^2 \left( \frac{T_2}{T_1} \right)^{\mathscr{C}}$.
However, this leads to a contradiction upon choosing $\nu$ sufficiently close to $1$ so that $(2\nu - 1)^2 <  \left( \frac{T_2}{T_1} \right)^{\mathscr{C}}$.
\end{proof}

The rest of the argument proceeds on this interval $[T_1, T_2]$; we relabel $[T_1, T_2]$ as $[0, \mathscr{T}]$.  We make the following assumptions on $A$ that hold for the remainder of the paper:

\begin{assumption}\label{AssumptionsOnA}
In what follows, we may assume that we have a solution $A(T)$ to the initial value problem \eqref{NormalizedNLS} with values in $H^{N(s)}$ defined on a time interval $[0, \mathscr{T}]$ on which we have the estimate
\begin{equation}\label{HsGrowthAssn}
\|A(T)\|_{H^s}^2 + 2(T + T_1)\langle \Lambda^{s}A(T), \Lambda^s(iA(T)|A(T)|^{q - 1}) \rangle \geq \mathscr{C}\|A(T)\|_{H^s}^2
\end{equation}
for some $T_1 > 0$.
\end{assumption}

%==============================================================================
%==============================================================================
\subsection{Outline of the Proof of Claim}
%==============================================================================
%==============================================================================

The rest of this paper is devoted to supplying the proof of the outstanding claim in the proof of Theorem \ref{BigSummaryTheorem}.  The approach of this paper is to regard a solution $A$ of \eqref{NormalizedNLS} as the modulation of a nonlinear geometric optics approximation 
\begin{equation}\label{WavePacketSolution2}
\tilde{z}(x, y, t) = \epsilon A(\epsilon (x + \omega^\prime t), \epsilon y, \epsilon^2 t)e^{i(kx + \omega t)} + O(\epsilon^2)
\end{equation}
to some solution $z(x, y, t) \in \mathbb{C}$ of a (still to be specified) ``progenitor'' equation for which long-time well-posedness can be established independently of the dynamics of $A$, and that indirectly controls the $H^s$-norm of $A$.  Here $k > 0$ is to be chosen later independent of $\epsilon > 0$, and $\omega, \omega^\prime$ are functions of $k$.  The ``$O(\epsilon^2)$'' in \eqref{WavePacketSolution2} typically contains higher order correctors; the more of these higher order terms that are retained, the closer one can expect the approximate solution to agree with the true solution $z$.  The scaling of the above ansatz on the perturbation parameter $\epsilon$ is balanced to yield an NLS-type equation over $O(\epsilon^{-2})$ time scales.  The Fourier transform of the type of wave packets we consider here are given by
$$(F(\epsilon x, \epsilon y)e^{ikx})^{\wedge}(\xi_1, \xi_2) = \frac{1}{\epsilon^2} \hat{F}\left(\frac{\xi_1 - k}{\epsilon}, \frac{\xi_2}{\epsilon} \right)$$
and are highly concentrated about the frequency $(k, 0)$. We also record the following scalings in $\epsilon$ for wave packets:
$$\|\epsilon F(\epsilon x, \epsilon y)e^{ikx}\|_{L^2_{xy}} = \|F(X, Y)\|_{L^2_{XY}} = O(1)$$ $$\|\epsilon F(\epsilon x, \epsilon y)e^{ikx}\|_{L^\infty_{xy}} = \epsilon\|F(X, Y)\|_{L^\infty_{XY}} = O(\epsilon)$$

In this paper it will be necessary to develop the approximate solution $\tilde{z}$ so that the residual terms are of size $O(\epsilon^6)$ in Sobolev space so that the remainder $z - \tilde{z}$ is $O(\epsilon^3)$ in Sobolev space.  In this work, unlike in the usual full justification discussed in \S 1.1, we need only choose a single sufficiently small value of $\epsilon$ and a single solution to the progenitor equation in order to make conclusions about the behavior of the NLS solution $A$.  However, we must also choose an equation for $z$ for which long-time existence can be shown without appealing to the quantitative behavior of $A(T)$ for $T > 0$.

The progenitor equation governing the solution $z(t)$ which this wave packet approximates is motivated by \cite{Totz3DNLS}, and essentially consists of a toy version of the water wave problem\footnote{Specifically, the water wave problem under the influence of gravity alone to recover (qHNLS), and acting under capillary forces alone to recover (qNLS).} with an appropriately scaled power nonlinearity and a suitably constructed penalization term.

We collect the notation needed to state the progenitor equation in the unknown $z$.  Let $\nu > 1$ be an arbitrary positive parameter independent of $\epsilon$ to be determined later.  Define the control norm by
\begin{equation}\label{ControlNorm}
\lambda(t) = \frac{\|\,\Lambda^s_\epsilon z_{tt}(t)\|^2}{\omega^4\nu^2 \|A_0\|_{H^s}^2}
\end{equation}
Recalling that $\Lambda^s_\epsilon(Ae^{ikx}) = (\Lambda^s A)e^{ikx}$, if one grants that $z$ and $\tilde{z}$ are sufficiently close over $O(\epsilon^{-2})$ time scales, then to leading order one expects $$\lambda(t) = \frac{\|A(\epsilon^2t)\|_{H^s}^2}{\nu^2\|A_0\|_{H^s}^2} + O(\epsilon)$$  Notice that $\lambda(t)$ is constructed so that its leading order is independent of $\omega$ and $\epsilon$; hence any estimates based on $\lambda(t)$ will not depend on these quantities.  Note also that $\lambda(0) = \nu^{-2}$ up to terms of order $\epsilon$, which is uniformly bounded away from $1$ as $\epsilon \to 0$.

We would like to design an equation that forces $\lambda(t)$ to remain bounded by $1$ for all $0 \leq t \leq \mathscr{T}\epsilon^{-2}$, since this is equivalent to proving Claim 1.  For $\lambda \in [0, 1)$, we define the function $g(\lambda)$ satisfying:
\begin{align}\label{gODE}
& g(\lambda) = 1 - \sqrt{1 - \lambda} \qquad \qquad g^{\prime}(\lambda) =  \frac{1}{2\sqrt{1 - \lambda}} \notag
\end{align}
Notice that $g$ is uniformly bounded on $[0, 1)$, but $g^\prime$ diverges as $\lambda \to 1$.  

We are at last ready to state the evolution equation governing $z$.  Recall that $\mathcal{B}_k$ is the mode filter whose Fourier transform is the characteristic function of the ball centered at $(k, 0)$ and of radius $\frac{k}{2}$.  Consider
\begin{equation}\label{zEquationOutline}
\begin{cases}
z_{tt} + |D|^p z + C_k \epsilon^{2}\mathcal{B}_k(z|z/\epsilon|^{q - 1}) + \mathcal{N}(z, \omega, \epsilon, t) g(\lambda(t)) = 0, \\
\mathcal{N}(z, \omega, \epsilon, t) := \omega^2\mathcal{B}_k\left(\epsilon^{q + 4}z + 2i\epsilon^5 (\epsilon^2 t + T_1) z|z|^{q - 1}\right) \\
z(0) = \mathcal{B}_k \tilde{z}(0) \\
z_t(0) = \mathcal{B}_k \tilde{z}_t(0)
\end{cases}
\end{equation}

By definition, $\tilde{z}_t(0)$ is calculated by differentiating the formula for $\tilde{z}(t)$ with respect to $t$, replacing any derivatives with respect to $T$ in the functions of slow variables with spatial derivatives and nonlinearities using \eqref{NormalizedNLS}, and evaluating at $t = 0$.  We will only need to set $p = 1, 3$ in the sequel; the case $p = 1$ corresponds to (qHNLS) and the case $p = 3$ to (qNLS).  The strange scaling in the power nonlinearity of \eqref{zEquationOutline} is designed to contribute the appropriate power nonlinearity to \eqref{NormalizedNLS} in the multiple scale calculation.  Similarly, the high powers of $\epsilon$ in the penalization term are chosen to make this term negligible with respect to the error $z - \tilde{z}$.  The coefficient $\mathcal{N}$ is chosen to exploit the growth condition of Assumption 1.

The first task in giving a full justification for \eqref{zEquationOutline} is to construct an approximate solution $\tilde{z}$ as above satisfying the following properties:

\begin{proposition}\label{FormalCalculation}
Let $k > 0$ be given.  Then there exists an $N = N(s)$, and a function $\tilde{z} = \sum_{n = 1}^6 \epsilon^n z^{(n)}$ satisfying 
\begin{itemize}
\item[(1)]{$z^{(1)} = A(\epsilon(x + \omega^\prime t), \epsilon y, \epsilon^2 t)e^{i(kx + \omega t)}$, where $A$ solves the initial value problem \eqref{NormalizedNLS}.}
\item[(2)]{$\|\Lambda^s_\epsilon \tilde{z}\| \leq C(k, \|A\|_{H^N})$.}
\item[(3)]{$\|\Lambda^s_\epsilon(\tilde{z} - \epsilon z^{(1)})\| \leq C(k, \|A\|_{H^N})\epsilon$.}
\item[(4)]{$\tilde{z}$ satisfies the equation $$\tilde{z}_{tt} + |D|^p\tilde{z} + C_k \epsilon^{2} \tilde{z}|\tilde{z}/\epsilon|^{q - 1} = \epsilon^7 \tilde{\mathcal{R}}$$ where $\|\Lambda^s_\epsilon \tilde{\mathcal{R}}\| \leq C(k, \|A\|_{H^N})\epsilon^{-1}$.}
\end{itemize}
\end{proposition}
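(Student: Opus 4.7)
The plan is to carry out a classical multiple-scale / WKB construction. Because the nonlinearity $z|z|^{q-1} = z^{(q+1)/2}\bar{z}^{(q-1)/2}$ acting on a single-phase wave preserves the carrier phase (the net phase count is always $+1$), I would look for a single-harmonic ansatz
\begin{equation*}
\tilde{z}(x, y, t) = \sum_{n = 1}^{6} \epsilon^n z^{(n)}(X, Y, T)\, e^{i\theta}, \qquad \theta = kx + \omega(k) t, \qquad z^{(1)} = A,
\end{equation*}
where $X = \epsilon(x + \omega'(k) t)$, $Y = \epsilon y$, $T = \epsilon^2 t$ are the slow variables. The chain rule gives $\partial_t[F e^{i\theta}] = (i\omega + \epsilon\omega'\partial_X + \epsilon^2\partial_T) F \cdot e^{i\theta}$, and Taylor-expanding the symbol $|\xi|^p$ about $\xi = (k, 0)$ yields $|D|^p[F e^{i\theta}] = \sum_{j \geq 0} \epsilon^j P_j(\partial_X, \partial_Y) F \cdot e^{i\theta}$ for explicit constant-coefficient differential operators $P_j$ of order $j$ (with $P_0 = k^p$). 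Substituting the ansatz into $\tilde{z}_{tt} + |D|^p \tilde{z} + C_k \epsilon^2 \tilde{z}\,|\tilde{z}/\epsilon|^{q-1}$ and sorting by powers of $\epsilon$ at the harmonic $e^{i\theta}$ produces a hierarchy of scalar equations on the envelopes $z^{(n)}$, which I would solve iteratively.

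Since the principal linear symbol $-\omega^2 + k^p$ vanishes at the carrier, the equation at order $\epsilon^n$ is a solvability condition on previously constructed envelopes rather than a pointwise equation on $z^{(n)}$. The order $\epsilon$ condition fixes the dispersion relation $\omega(k) = k^{p/2}$; the order $\epsilon^2$ condition fixes the group velocity $\omega'(k) = \frac{p}{2}k^{p/2-1}$ already built into $X$; the order $\epsilon^3$ condition, after choosing $C_k$ to match coefficients, reduces \emph{exactly} to the NLS equation \eqref{NormalizedNLS} satisfied by $A$ (this is the key step at which the hypothesis that $A$ solves \eqref{NormalizedNLS} is used to eliminate the principal $e^{i\theta}$-residual). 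For $n = 3, 4, 5$, the order $\epsilon^{n+1}$ condition is a linear inhomogeneous Schr\"odinger-type equation in $(T, X, Y)$ for the corrector $z^{(n)}$, with source built out of $A$ and the previously determined $z^{(2)}, \dots, z^{(n-1)}$; this is solvable globally in $T$ at a bounded loss of derivatives, so by the persistence of regularity in Proposition \ref{HNLSLocalWellposedness} I may take $A \in H^{N(s)}$ with $N(s)$ chosen large enough to absorb the losses at all six orders.

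Once the six envelopes are in hand, estimates (2)--(4) follow by routine counting. The identity $\|\Lambda^s_\epsilon(F(\epsilon\cdot)\, e^{ikx})\| = \|\Lambda^s F\|_{L^2}$ converts rescaled norms of each wave-packet term $\epsilon^n z^{(n)} e^{i\theta}$ into ordinary $H^s$-norms of the envelopes $z^{(n)}(X, Y, T)$, which are controlled in terms of $\|A\|_{H^{N(s)}}$ by the construction, delivering (2) and (3). The residual $\epsilon^7 \tilde{\mathcal{R}}$ in (4) collects the Taylor tails of $|D|^p$ beyond $P_6$, the chain-rule tails of $\partial_{tt}$, and the cross terms of the truncated nonlinearity; the nonlinear contributions are controlled via Lemma \ref{RescaledMoserInequality}, while the linear tails applied to $z^{(6)}$ introduce the single factor of $\epsilon^{-1}$ appearing in the statement. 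The main obstacle I anticipate is the construction step at orders $\epsilon^4, \epsilon^5, \epsilon^6$: one must verify that the resonant projection of the source at each such order is cancelled precisely by a well-posed linear Schr\"odinger-type equation on $z^{(n)}$, and carefully track derivative counts so that a single fixed $N(s)$ suffices throughout the hierarchy.
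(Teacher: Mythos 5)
Your proposal is correct and follows essentially the same route as the paper: a single-harmonic multiple-scales ansatz, with the dispersion relation fixed at order $\epsilon$, the group velocity at order $\epsilon^2$, the NLS equation \eqref{NormalizedNLS} emerging as the solvability condition at order $\epsilon^3$, linear inhomogeneous Schr\"odinger equations determining the correctors at higher orders, and a derivative count fixing $N(s)$. The only minor imprecision is the source of the factor $\epsilon^{-1}$ in the bound on $\tilde{\mathcal{R}}$, which in the paper comes from the Jacobian of the slow-variable rescaling when measuring envelope functions in $L^2$ of the fast variables (cf.\ Lemma \ref{WavePacketCutoff}) rather than from the linear Taylor tails themselves.
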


\begin{proof}  See Section 2.  \end{proof}

The sense in which $z$ and $\tilde{z}$ are close is given by

\begin{proposition}\label{RemainderEstimatesInOutline}
Let $\mathscr{T}_0 \leq \mathscr{T}$ be given, and let $\tilde{z}$ be the approximate solution defined by \eqref{ApproximateSolutionFormula}.  For some $\nu \in (1, 2)$, assume the existence of a function $z$ solving the initial value problem given by \eqref{zEquationOutline} for all $(x, y) \in \mathbb{R}^2$ and almost every time $t \in [0, \mathscr{T}_0\epsilon^{-2}]$ satisfying $z = \mathcal{B}_k z$ and $$(z, z_t, z_{tt}) \in (C^1 \times C^0 \times L^\infty)([0, \mathscr{T}_0\epsilon^{-2}] : L^2).$$  Then there exists an $\epsilon_0 > 0$ sufficiently small so that for all $0 < \epsilon < \epsilon_0$ the following holds: for almost every $t \in [0, \mathscr{T}_0\epsilon^{-2}]$ we have the bounds:
\begin{equation}\label{RemainderBoundsInOutline}
\|\Lambda^s_\epsilon(z_{tt}(t) - \tilde{z}_{tt}(t))\| + \|\Lambda^s_\epsilon(z_{t}(t) - \tilde{z}_t(t))\| + \|\Lambda^s_\epsilon(z(t) - \tilde{z}(t))\| \leq C(s, k, \|A\|_{H^N})\epsilon^3
\end{equation}
where in particular $C$ is independent of $\mathscr{T}_0$.
\end{proposition}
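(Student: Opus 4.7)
The plan is to run a wave-equation energy estimate in the rescaled Sobolev norm $\|\Lambda^s_\epsilon \cdot \|$ on the error $w := z - \tilde{z}$, combined with a continuity argument. Subtracting the equation from Proposition \ref{FormalCalculation} from \eqref{zEquationOutline} and using $C_k\epsilon^2 z|z/\epsilon|^{q-1} = C_k\epsilon^{3-q}z|z|^{q-1}$, I obtain
\begin{equation*}
w_{tt} + |D|^p w = -C_k\epsilon^{3-q}\mathcal{B}_k\bigl(z|z|^{q-1} - \tilde{z}|\tilde{z}|^{q-1}\bigr) + C_k \epsilon^{3-q}(1-\mathcal{B}_k)\bigl(\tilde{z}|\tilde{z}|^{q-1}\bigr) - \mathcal{N}(z,\omega,\epsilon,t)g(\lambda) - \epsilon^7 \tilde{\mathcal{R}},
\end{equation*}
with initial data $w(0) = -(1-\mathcal{B}_k)\tilde{z}(0)$ and $w_t(0) = -(1-\mathcal{B}_k)\tilde{z}_t(0)$. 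Because the Fourier transform of $\tilde{z}$ concentrates on a scale $O(\epsilon)$ about $(k,0)$, both these initial quantities and the high-frequency commutator $(1-\mathcal{B}_k)(\tilde{z}|\tilde{z}|^{q-1})$ are smaller in $\|\Lambda^s_\epsilon\cdot\|$ than any fixed power of $\epsilon$. I will work with the natural wave-equation energy
\begin{equation*}
E(t) := \|\Lambda^s_\epsilon w_t(t)\|^2 + \|\Lambda^s_\epsilon |D|^{p/2} w(t)\|^2.
\end{equation*}

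The core of the argument is a bootstrap. I will assume on a maximal subinterval $[0, T^\star] \subset [0, \mathscr{T}_0 \epsilon^{-2}]$ the bound $\sqrt{E(t)} \leq \epsilon^{5/2}$, and establish the strictly stronger $\sqrt{E(t)} = O(\epsilon^4)$ on that interval. Under the bootstrap, Lemma \ref{L1HatEstimate} combined with Proposition \ref{FormalCalculation}(2) yields $\|\tilde{z}\|_{L^\infty}, \|z\|_{L^\infty} = O(\epsilon)$. Differentiating $E$ and using the error equation, the $|D|^p$ contributions cancel in the usual fashion; the four source terms are then estimated as follows. First, the semilinear difference $\epsilon^{3-q}(z|z|^{q-1} - \tilde{z}|\tilde{z}|^{q-1})$ expands telescopically into $q$-linear products each containing a single factor of $w$, and Lemma \ref{L1HatEstimate} applied with that factor in $L^2$ bounds the result by $C\epsilon^2 \|\Lambda^s_\epsilon w\|$, for a contribution to $E'$ at most $C\epsilon^2 E$. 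Second, because existence of $z$ requires $\lambda(t)<1$ a.e.\ so that $g(\lambda) \in [0,1]$, and since the explicit prefactors in $\mathcal{N}$ carry high powers of $\epsilon$ with $(\epsilon^2 t + T_1) \leq C(\mathscr{T}_0, T_1)$, the penalization contributes $O(\epsilon^{q+4})\sqrt{E}$. Third, the residual contributes at most $\epsilon^7\|\Lambda^s_\epsilon\tilde{\mathcal{R}}\|\sqrt{E} \leq C\epsilon^6 \sqrt{E}$ by Proposition \ref{FormalCalculation}(4). Fourth, the high-frequency commutator is negligible. These combine to $\frac{d}{dt}\sqrt{E} \leq C\epsilon^2\sqrt{E} + C\epsilon^6$, and Gronwall on $[0,\mathscr{T}_0\epsilon^{-2}]$ produces $\sqrt{E(t)} \leq Ce^{C\mathscr{T}_0}\epsilon^4$, closing the bootstrap with a full power of $\epsilon$ to spare.

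To convert this energy bound into the statements in \eqref{RemainderBoundsInOutline}, I first observe that $\mathcal{B}_k w$ has Fourier support away from the origin, so $|D|^{-p/2}$ is bounded there and $\|\Lambda^s_\epsilon w\| \leq C_k \|\Lambda^s_\epsilon |D|^{p/2} w\| + (\text{negligible mode-filter error}) = O(\epsilon^4)$. For the $\|\Lambda^s_\epsilon w_{tt}\|$ bound I extract $w_{tt}$ directly from the error equation, which reduces to controlling $\|\Lambda^s_\epsilon |D|^p w\|$; this I obtain by rerunning the entire energy estimate with $\Lambda^s_\epsilon$ replaced by $\Lambda^{s+p/2}_\epsilon$, a step permitted by the strong regularity assumption $A \in H^{N(s)}$ with $N(s) \gg s$. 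The main anticipated obstacle is careful bookkeeping of small factors: since every source must be integrated over a time interval of length $O(\epsilon^{-2})$, any contribution producing a term larger than $\epsilon^5$ would defeat the $\epsilon^3$ target. In particular, one must verify that the argument never requires control of $g^\prime(\lambda)$, which diverges as $\lambda \to 1^-$, but only the boundedness of $g$ itself, which is automatic from the hypothesized existence of $z$.
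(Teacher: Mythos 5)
Your proposal is correct and follows essentially the same route as the paper's proof of Lemma \ref{RemainderEstimates}: the same error equation, the same wave energy in the $\Lambda^s_\epsilon$ norm, the same use of Lemma \ref{L1HatEstimate} with the remainder factor in $L^2$ and the remaining factors in $\hat{L}^1$ (controlled via the $\mathcal{B}_k$ frequency localization), a Gr\"onwall-plus-continuity bootstrap, boundedness of $g$ alone for the penalization term, and recovery of $w_{tt}$ from the equation. The only inessential difference is that your higher-regularity rerun with $\Lambda^{s+p/2}_\epsilon$ is unnecessary, since $|D|^p$ is a bounded multiplier on the range of $\mathcal{B}_k$ and the paper simply reads $\|\Lambda^s_\epsilon w_{tt}\|$ off the equation directly in $L^2$.
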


\begin{proof}  See Lemma \ref{RemainderEstimates} in Section 4.  \end{proof}

Finally, we must show that \eqref{zEquationOutline} is well-posed for long times.  If one grants a solution to \eqref{zEquationOutline} exists, one can show using an energy type argument applied to \eqref{zEquationOutline} that $\lambda(t)$ satisfies a differential inequality that forces $\lambda(t)$ to remain bounded uniformly away from 1.  In caricature, the differential inequality in question is of the form
\begin{equation}\label{CartoonODE}
\Bigl(1 - \mathfrak{c} g^\prime(\lambda(t))\Bigr)\lambda^\prime(t) > 0
\end{equation}
where here $\mathfrak{c} = \mathfrak{c}(s, q, \|u_0\|_{H^s}, \epsilon) \ll 1$ (see the calculations preceding \eqref{ODEBoundOnLambda} for a more precise statement).  Now if $g^\prime(\lambda(t)) \geq 2/\mathfrak{c}$, that is, if $\lambda(t) \geq 1 - (\mathfrak{c}/4)^2$, then we would conclude that $\lambda^\prime(t) < 0$, which provides our uniform bound.

However, there is a price to be paid for forcing $\lambda(t)$ to obey the ODE \eqref{CartoonODE}: the the equation \eqref{zEquationOutline} is a fully nonlinear equation with non-Lipschitz dependence, for which it is not even clear that local solutions can be constructed.  The long time existence of \eqref{zEquationOutline} is therefore the most difficult step of the argument, and is summarized in the

\begin{proposition}\label{zLongTimeWellPosed}
Let $1 < \nu < 2$ be given.  Suppose that for all $T \in [0, \mathscr{T}]$, $A \in C([0, \mathscr{T}], H^N)$ satisfies Assumption 1.  Then there exists an $\epsilon_0 > 0$ sufficiently small depending on $s$, $p$, $q$, $k$, $\|A_0\|_{H^N}$, $\mathscr{T}$ as well as a choice of $k > 0$ depending on $\mathbf{T}, \|u\|_{C^0([0, \mathbf{T}] : L^\infty)}$ so that the following holds: there exists a function $z(x, y, t)$ on $\mathbb{R}^2 \times [0, \mathscr{T}\epsilon^{-2}]$ for which $z(t) = \mathcal{B}_k z(t)$ for every $t \in [0, \mathscr{T}\epsilon^{-2}]$, for which $\Lambda^s_\epsilon z_{tt}$ is defined and uniformly bounded for every $(x, y, t)$ with
\begin{equation*}
\Lambda^s_\epsilon z_{tt} \in L^\infty([0, \mathscr{T}\epsilon^{-2}], L^2)
\end{equation*}
satisfying the key bound
\begin{equation}\label{LambdaLessThan1InOutline}
\max_{0 \leq t \leq \mathscr{T}\epsilon^{-2}} \frac{\|\,\Lambda^s_\epsilon z_{tt}(t)\|^2}{\omega^4 \nu^2 \|A_0\|_{H^s}^2} < 1
\end{equation}
and which solves the initial value problem given by \eqref{zEquationOutline} for every $(x, y) \in \mathbb{R}^2$ and almost every $t \in [0, \mathscr{T}\epsilon^{-2}]$.
\end{proposition}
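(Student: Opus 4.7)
The plan is to extend the local solution guaranteed by Lemma \ref{NanoLocalWellposedness} (Section 4) to all of $[0, \mathscr{T}\epsilon^{-2}]$ by a continuation argument driven by a differential inequality for the control norm $\lambda(t)$. Let $T_\ast \in (0, \mathscr{T}\epsilon^{-2}]$ be maximal such that a solution $z$ of \eqref{zEquationOutline} with $z = \mathcal{B}_k z$ and $\Lambda^s_\epsilon z_{tt} \in L^\infty([0, T_\ast) : L^2)$ exists with $\lambda(t) < 1$ throughout $[0, T_\ast)$. Since $\lambda(0) = \nu^{-2} + O(\epsilon)$ is uniformly bounded below $1$ once $\epsilon$ is small, we have $T_\ast > 0$; the goal is to rule out $T_\ast < \mathscr{T}\epsilon^{-2}$.

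Differentiating $\lambda(t) = \|\Lambda^s_\epsilon z_{tt}\|^2/(\omega^4 \nu^2 \|A_0\|_{H^s}^2)$ and substituting for $z_{ttt}$ via the $t$-derivative of \eqref{zEquationOutline}, the $\mathcal{N} g'(\lambda) \lambda'(t)$ piece produced by $\partial_t[\mathcal{N}\, g(\lambda)]$ can be collected on the left to yield
\begin{equation*}
\Bigl[\,\omega^4 \nu^2 \|A_0\|_{H^s}^2 + 2 g'(\lambda(t))\, \langle \Lambda^s_\epsilon z_{tt}, \Lambda^s_\epsilon \mathcal{N}\rangle \,\Bigr] \lambda'(t) = \mathscr{R}(t),
\end{equation*}
where $\mathscr{R}(t)$ collects the inner products produced by $|D|^p z_t$, $\partial_t[\mathcal{B}_k(z|z/\epsilon|^{q-1})]$, and $\mathcal{N}_t g(\lambda)$. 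The key quantitative step is to evaluate $\langle \Lambda^s_\epsilon z_{tt}, \Lambda^s_\epsilon \mathcal{N}\rangle$ to leading order in $\epsilon$. Using Proposition \ref{RemainderEstimatesInOutline} to replace $z$ and $z_{tt}$ by their approximate counterparts at the cost of an $O(\epsilon^3)$ error, together with the wave-packet relation $z^{(1)}_{tt} = -\omega^2 z^{(1)} + O(\epsilon)$ and the intertwining $\Lambda^s_\epsilon(F e^{ikx}) = (\Lambda^s F) e^{ikx}$, the leading contribution reduces to a multiple of $\omega^4 \epsilon^{q+6}(\epsilon^2 t + T_1) \langle \Lambda^s A, \Lambda^s(i A |A|^{q-1})\rangle$. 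Assumption \ref{AssumptionsOnA} controls precisely this pairing from below by a positive multiple of $\|A\|_{H^s}^2$, so the bracket above takes the cartoon form $\omega^4 \nu^2 \|A_0\|_{H^s}^2 (1 - \mathfrak{c} g'(\lambda)) + O(\epsilon)$ for some $\mathfrak{c} = \mathfrak{c}(s, q, \|A_0\|_{H^s}, \epsilon) \ll 1$, matching \eqref{CartoonODE}.

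With this inequality in hand, suppose for contradiction that $\lambda(t) \nearrow 1 - (\mathfrak{c}/4)^2$ as $t \to t_0^-$ for some $t_0 \in (0, T_\ast)$. Since $g'(\lambda(t_0)) = 2/\mathfrak{c}$, the bracket becomes strictly negative in a neighborhood of $t_0$; the same wave-packet expansion applied term-by-term to the inner products in $\mathscr{R}(t)$ shows that $\mathscr{R}$ remains uniformly bounded with definite sign, forcing $\lambda'(t_0) < 0$ and contradicting the monotone approach to $1 - (\mathfrak{c}/4)^2$ from below. Hence $\lambda(t) \leq 1 - (\mathfrak{c}/4)^2$ uniformly on $[0, T_\ast)$, and Lemma \ref{NanoLocalWellposedness} can then be reapplied at $t = T_\ast$ to continue the solution past $T_\ast$ with the same frequency localization and regularity. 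Maximality forces $T_\ast = \mathscr{T}\epsilon^{-2}$, completing the bootstrap. Throughout we fix $k$ large and $\epsilon_0$ small so that the $\mathcal{B}_k$-commutator errors on $\tilde{z}$, the $O(\epsilon)$ residual in the cartoon inequality, and the approximation error \eqref{RemainderBoundsInOutline} are all dominated by the positive lower bound supplied by Assumption \ref{AssumptionsOnA}.

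The main obstacle is the quantitative leading-order analysis of the inner products above: every error term contributing to both the bracket and to $\mathscr{R}(t)$ must be bounded \emph{uniformly in $\lambda(t)$}, since the bracket is permitted to become arbitrarily small before the sign flip activates. A systematic application of Lemmas \ref{RescaledMoserInequality} and \ref{L1HatEstimate}, combined with Proposition \ref{RemainderEstimatesInOutline} applied to each $q$-linear factor of $z|z/\epsilon|^{q-1}$ and of $\mathcal{N}$, is required to ensure that the higher-order correctors in $\tilde{z}$ and the remainder $z - \tilde{z}$ never conspire to cancel the leading $\omega^4 \epsilon^{q+6}\|A\|_{H^s}^2$ term that Assumption \ref{AssumptionsOnA} provides.
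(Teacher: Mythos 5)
Your overall strategy -- derive the differential inequality \eqref{CartoonODE} for $\lambda$ by collecting the $\mathcal{N}\,g'(\lambda)\lambda'$ term on the left, use Assumption \ref{AssumptionsOnA} to fix the signs, and conclude that $\lambda'$ flips sign before $\lambda$ reaches $1$ -- is exactly the paper's strategy, and your identification of the leading-order pairing via Propositions \ref{LambdaPrimeMultiscale} and \ref{RemainderEstimatesInOutline} is on target. However, there is a genuine gap at the step where you run the contradiction: you assume that $\lambda(t)$ is continuous, approaches the threshold $1-(\mathfrak{c}/4)^2$ monotonically, and has a classical derivative at the first touching time $t_0$. None of this is available. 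The local theory (Proposition \ref{zLocalWellPosed}) produces only $\Lambda^s_\epsilon z_{tt}\in L^\infty_t L^2$ with the equation satisfied for \emph{almost every} $t$, and $z_{tt}$ is built by concatenating pieces with jump discontinuities at the endpoints of the discretization intervals; a fortiori $\lambda$ is not known to be continuous, and "the first time $\lambda$ reaches the threshold" is not a well-defined object. The paper repairs this with a discrete surrogate: it identifies the first \emph{local existence interval} $(\mathbf{t}_j,\mathbf{t}_{j+1}]$ on which $\epsilon^{q+4}g'(\lambda)$ exceeds $8/\mathscr{C}$, uses the oscillation bound \eqref{LocalLambdaBound} of Proposition \ref{zLocalWellPosed}(b)(iv) to pin $g'(\lambda(t))$ in the band $[4/\mathscr{C},16/\mathscr{C}]\epsilon^{-(q+4)}$ on that whole interval, and only then observes that the a.e.\ inequality \eqref{ODEBoundOnLambda} forces $\lambda'\leq -c\epsilon^2$ a.e.\ there, whence $\lambda$ is absolutely continuous and strictly decreasing on it, contradicting first-ness. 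Your argument needs this (or an equivalent) mechanism; as written the contradiction does not go through.

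Two smaller points. First, you assert that $\mathscr{R}(t)$ has "definite sign" from the wave-packet expansion alone; in fact its positivity is not automatic -- the leading term is $C_\omega\epsilon^2\langle\Lambda^{2s}A, iA|A|^{q-1}\rangle/(\nu^2\|A_0\|_{H^s}^2)$, and one must invoke Assumption \ref{AssumptionsOnA} a second time (together with $\mathscr{C}>1$) to bound this below by $\tfrac{\mathscr{C}-1}{8T_2}$; without the growth hypothesis this inner product could be negative and the whole sign argument collapses. Relatedly, the leading part of $\langle\Lambda^{2s}_\epsilon z_{tt},-\mathcal{N}\rangle$ contains \emph{both} the $\epsilon^{q+4}z$ piece (contributing $\|A\|_{H^s}^2$) and the $(T+T_1)z|z|^{q-1}$ piece; it is precisely their sum that Assumption \ref{AssumptionsOnA} bounds below by $\mathscr{C}\|A\|_{H^s}^2$, which is what makes the bracket coefficient exceed $1$ when $g'(\lambda)\gtrsim\epsilon^{-(q+4)}/\mathscr{C}$. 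Second, continuation past $T_\ast$ requires the full apparatus of Proposition \ref{zLocalWellPosed}(b) (compatibility data for $z_{tt}$ at the right endpoint, and the fact that the equation is satisfied \emph{at} $t=\mathbf{t}$ so the construction can be restarted there; cf.\ Remark \ref{LambdaSomewhatControlled}), not Lemma \ref{NanoLocalWellposedness} alone, and the relevant smallness parameter is $\omega$ chosen \emph{small} (hence $k$ small, since $\omega^2=k^p$), not $k$ large.
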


\begin{proof}  See Section 4 for the local well-posedness, and Section 5 for the long-time well posedness.  We pause to note that a sufficiently large choice of $\mathscr{C}$ in the growth condition of Assumption 1 and a sufficiently small choice of $\omega$ play the role of small parameters that allow us to control the full nonlinearity in \eqref{zEquationOutline}.
\end{proof}

Assuming these propositions we can give the 

\begin{proof}[Proof of Claim 1.]
Consider $\epsilon \in (0, \epsilon_0]$ for an $\epsilon_0 > 0$ to be taken sufficiently small in what follows.  Using $A$, $\epsilon$, $k$ and $p$ we construct the approximate solution $\tilde{z}$ given by Proposition \ref{FormalCalculation}.  Introduce $1 < \nu < 2$ to be determined later.  Then since Assumption 1 holds by hypothesis, we may choose $k > 0$ and $\epsilon_0 > 0$ sufficiently small so as to invoke Proposition \ref{zLongTimeWellPosed} to construct the almost everywhere solution $z(t)$ to \eqref{zEquationOutline} that exists and satisfies  \eqref{LambdaLessThan1InOutline} on $[0, \mathscr{T}\epsilon^{-2}]$.  Choose $\epsilon_0 > 0$ sufficiently small so that the conclusion of Proposition \ref{RemainderEstimatesInOutline} holds.  Then using \eqref{LambdaLessThan1InOutline}, Propositions \ref{FormalCalculation}, \ref{RemainderEstimatesInOutline} and \ref{zLongTimeWellPosed}, we have
\begin{align*}
 \|A(\epsilon^2 t)\|_{H^s} & \leq \omega^{-2}\|\, \Lambda^s_\epsilon \tilde{z}_{tt}(t)\| + C(k, \|A\|_{H^N})\epsilon \\
& \leq \omega^{-2}\|\Lambda^s_\epsilon z_{tt}(t)\|  + \omega^{-2}\|\Lambda^s_\epsilon (z_{tt}(t) - \tilde{z}_{tt}(t))\| + C(k, \|A\|_{H^N})\epsilon \\
& \leq \omega^{-2}\|\, \Lambda^s_\epsilon z_{tt}(t)\| +  C(k, \|A\|_{H^N})\epsilon \qquad \text{a.e. } t \in [0, \mathscr{T}\epsilon^{-2}] \\
& < \nu \|A_0\|_{H^s} + C(k, \|A\|_{H^N})\epsilon \qquad \text{a.e. } t \in [0, \mathscr{T}\epsilon^{-2}]
\end{align*}
Finally, choose $\epsilon_0 > 0$ so small so that $C(k, \|A\|_{H^N})\epsilon < (\nu - 1) \|\,\Lambda^s A_0\|$.  Then with $T = \epsilon^2 t$ we have $$\|\Lambda^s A(T)\| < (2\nu - 1) \|\Lambda^s A_0\| \qquad \text{a.e. } T \in [0, \mathscr{T}]$$ By Proposition \ref{HNLSLocalWellposedness}, $\|\Lambda^s A(T)\|$ is a continuous function of $T$, and so
$$\|\Lambda^s A(T)\| \leq (2\nu - 1) \|\Lambda^s A_0\| \qquad T \in [0, \mathscr{T}]$$ which is the conclusion of Claim 1.
\end{proof}

\begin{remark}
We draw special attention to the fact that the choice of $\epsilon$ plays an important analytic role in closing the above argument by eliminating a constant that depends upon the $H^N$-norm of $A$.  This control is vital, since there is of course no reason that the $H^N$-norm of $A$ should be controlled by the $H^s$-norm of $A$.\footnote{Indeed, by an inspection of the proof of Lemma \ref{MollificationArgument}, we must consider scenarios in which the $H^N$-norm is arbitrarily large as $\iota \to 0$ in order to prove the result for low regularity initial data.}
\end{remark}

%==============================================================================
%==============================================================================
%==============================================================================
\section{Formal Calculation of the Approximate Solution}
%==============================================================================
%==============================================================================
%==============================================================================

In this section we derive the formal approximate solution $\tilde{z}(t)$.  The derivation of $\tilde{z}(t)$ is a routine multiple scales calculation, and is given after the presentation of some standard lemmas giving the formal expansion of the Fourier multipliers appearing in our equations.

%==============================================================================
%==============================================================================
\subsection{Lemmas on Multiple Scales Expansions}
%==============================================================================
%==============================================================================

\begin{lemma}\label{WavePacketCutoff}
Let $s, m \geq 0$ be integers, $k > 0$ and $\epsilon > 0$ be given.  Then for any function $F \in H^{s + m}$, there is a constant $C$ depending on $k, s, m$ so that $$\left\|(I - \mathcal{B}_k)\left(F(\epsilon x, \epsilon y)e^{ikx}\right)\right\|_{H^s_{xy}} \leq C \epsilon^{m - 1}\|F\|_{H^{s + m}_{XY}}$$
\end{lemma}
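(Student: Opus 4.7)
The strategy is a direct Fourier-space computation that exploits two facts: the Fourier transform of the wave packet $F(\epsilon x, \epsilon y) e^{ikx}$ is localized in an $O(\epsilon)$ neighborhood of $(k, 0)$, while the symbol of $(I - \mathcal{B}_k)$ vanishes identically on the ball of radius $k/2$ about $(k, 0)$. After a natural rescaling, the cutoff region becomes exactly where the rescaled frequency variable $\eta$ is of size at least $k/(2\epsilon)$, which is very large for $\epsilon$ small; trading Sobolev regularity of $F$ against inverse powers of this large frequency will produce the claimed $\epsilon^{m-1}$ decay.

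Concretely, I would apply Plancherel together with the scaling identity $\widehat{F(\epsilon\cdot)e^{ikx}}(\xi) = \epsilon^{-2}\hat F((\xi - ke_1)/\epsilon)$ to write
\[
\|(I - \mathcal{B}_k)(F(\epsilon\cdot)e^{ikx})\|_{H^s}^2 \;\sim\; \int_{|\xi - ke_1| \geq k/2} (1 + |\xi|^2)^s \, \epsilon^{-4} \, |\hat F((\xi - ke_1)/\epsilon)|^2 \, d\xi,
\]
and then change variables via $\eta = (\xi - ke_1)/\epsilon$, which contributes a Jacobian factor of $\epsilon^2$ and converts the region of integration into $\{|\eta| \geq k/(2\epsilon)\}$.

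Next, a standard Peetre-type estimate gives the uniform (in $\epsilon$ bounded) inequality $(1 + |ke_1 + \epsilon\eta|^2)^s \leq C_{k,s}(1 + |\eta|^2)^s$; the very-large-$\epsilon$ regime is harmless because the claim is essentially trivial there and any growth can be absorbed into the constant. On the domain $|\eta| \geq k/(2\epsilon)$, I would then multiply and divide the integrand by $(1 + |\eta|^2)^m$ and use the pointwise lower bound
\[
(1 + |\eta|^2)^{-m} \leq \left(\tfrac{k^2}{4\epsilon^2}\right)^{-m} = C_{k,m}\,\epsilon^{2m}
\]
to extract the small factor. What remains is bounded by $\int (1 + |\eta|^2)^{s+m}|\hat F(\eta)|^2 \, d\eta$, which is the squared $H^{s+m}$ norm of $F$ (up to the Fourier-normalization constant of the paper). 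Combining the powers of $\epsilon$ yields $\epsilon^{-2}\cdot \epsilon^{2m} = \epsilon^{2m-2}$ inside the squared norm, and taking square roots completes the proof.

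The main obstacle is essentially nil: the whole argument reduces to the single observation that $(I - \mathcal{B}_k)$ excises exactly the frequency band where the rescaled wave packet is supported, so each unit of Sobolev regularity of $F$ one is willing to spend purchases one full power of $\epsilon$. The only mild point to watch is keeping the Peetre-type weight estimate uniform in $\epsilon$, which is handled by treating $\epsilon$ as effectively bounded and folding any $\epsilon$-dependence into the constant $C_{k,s,m}$.
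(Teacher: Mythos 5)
Your proposal is correct and is essentially the paper's own argument: both pass to the Fourier side via Plancherel and the scaling identity, observe that the integration is confined to $|\xi - ke_1| > k/2$, i.e.\ $|\eta| \geq k/(2\epsilon)$ after rescaling, and trade $m$ units of Sobolev regularity of $F$ for the factor $\epsilon^{m}$, with the remaining $\epsilon^{-1}$ coming from the $L^2$ scaling of the profile. The only cosmetic difference is that you carry the $\langle\xi\rangle^{s}$ weight through the frequency integral with a Peetre-type inequality, whereas the paper proves the $L^2$ case and then applies it to $\partial^{j}F$ for $|j|\leq s$ using that $\mathcal{B}_k$ commutes with differentiation; both handle the (harmless, since $\epsilon\ll 1$ throughout) uniformity in $\epsilon$ in the same implicit way.
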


\begin{proof}
We calculate by Plancherel's Identity that for any $m \geq 0$ that
\begin{align*}
& \quad\; \left\|(I - \mathcal{B}_k)\left(F(\epsilon x, \epsilon y)e^{ikx}\right)\right\|_{L^2_{xy}} \\
& = \left(\int_{|(\xi_1 - k, \xi_2)| > \frac{k}{2}} \left|\frac{1}{\epsilon^2}\hat{F}\left(\frac{\xi_1 - k}{\epsilon}, \frac{\xi_2}{\epsilon}\right)\right|^2 d\xi \right)^{1/2} \\
& \leq \left(\int_{|(\xi_1 - k, \xi_2)| > \frac{k}{2}} \left|\frac{\epsilon^m}{\langle(\xi_1 - k, \xi_2)\rangle^m}\frac{1}{\epsilon^2}\widehat{\Lambda^m F}\left(\frac{\xi_1 - 1}{\epsilon}, \frac{\xi_2}{\epsilon}\right)\right|^2 d\xi\right)^{1/2} \\
& \leq C \epsilon^{m - 1}\|F\|_{H^m_{XY}}
\end{align*} where the constant $C$ depends only on $m$ and $k$.  Note that we have lost a power of $\epsilon$ by measuring $F$ in $L^2_{XY}$ where $X = \epsilon x$, $Y = \epsilon y$.  Since $\mathcal{B}_k$ commutes with differentiation, the result now follows upon applying the above to $\partial^j F$ for $|j| \leq s$.
\end{proof}

In order to perform the formal expansion of the approximate solution, we first need an expansion of $|D|$ about a wave packet $Fe^{i\phi}$, that is, about the fixed frequency $\xi = (k, 0)$.  Using the expansion of the symbol $|\xi|$ in Fourier space
\begin{equation}\label{|D|FrequencyExpansion}
|\xi| = |k| + \frac{k}{|k|}(\xi_1 - k) + \frac{1}{|k|}\xi_2^2 + \cdots \end{equation}
we find by formal Taylor expansion that the corresponding series development of the symbol of $|D|^p$ is given by
\begin{align}\label{|D|pFrequencyExpansion}
|\xi|^p & = |k|^p + pk|k|^{p - 2}(\xi_1 - k) + p|k|^{p - 2}\xi_2^2 + \frac12p(p - 1) |k|^{p - 2} (\xi_1 - k)^2\notag \\
& \quad + \sum_{j = 3}^5 P_j(\xi_1 - k, \xi_2) + O(|(\xi_1 - k, \xi_2)|^6)
\end{align}
where the polynomials $P_j$ are of degree $j$, have coefficients depending on $k$ and $p$, and will not be explicitly used in the formal calculation.  When  applied to a wave packet of the form $F(\epsilon x, \epsilon y)e^{ikx}$ and written in physical space, the multiplier formally corresponds to the multiscale operator
\begin{align}\label{|D|Expansion}
|D|^p & = |k|^p - ipk|k|^{p - 2}\epsilon\partial_{x_1} - p|k|^{p - 2}\epsilon^2\partial_{y_1}^2 - \frac12p(p - 1) |k|^{p - 2} \partial_{x_1}^2\notag \\
& \quad + \sum_{j = 3}^5 \epsilon^ j P_j(-i \partial_{x_1}, -i \partial_{y_1}) + \cdots \\
& := \sum_{j = 0}^5 \epsilon^j (|D|^p)^{(j)} + \cdots \notag
\end{align}
To make this rigorous we present the
\begin{lemma}\label{|D|ExpansionLemma}
Let $k \neq 0$, $p > 0$ and $\epsilon > 0$ be given.  Then for any function $F \in H^{p + 6}$, there is a constant $C$ depending only on $k$ so that $$\left\|\left(|D|^p - \sum_{j = 0}^5 \epsilon^j (|D|^p)^{(j)} \right)(Fe^{ikx})\right\| \leq C \epsilon^{5}\|F\|_{H^{6 + p}}$$
\end{lemma}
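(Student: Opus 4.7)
The plan is to work in Fourier space and apply Taylor's theorem to the symbol $|\xi|^p$ at the base frequency $\xi = ke_1$. Consistent with the discussion preceding \eqref{|D|Expansion}, I interpret $Fe^{ikx}$ in the statement as shorthand for the wave packet $F(\epsilon x, \epsilon y)e^{ikx}$; under this convention $\sum_{j = 0}^{5} \epsilon^j (|D|^p)^{(j)}$ is the Fourier multiplier whose symbol is the degree-$5$ Taylor polynomial $T_5(\xi)$ of $|\xi|^p$ expanded about $\xi = ke_1$. Setting $E(\xi) := |\xi|^p - T_5(\xi)$ for the symbol of the error operator, the claim becomes $\|E(F(\epsilon\,\cdot, \epsilon\,\cdot)e^{ikx})\|_{L^2} \leq C_{k,p}\,\epsilon^5 \|F\|_{H^{6+p}}$.

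The first reduction is via Plancherel. Since $(F(\epsilon x, \epsilon y)e^{ikx})^\wedge(\xi) = \epsilon^{-2}\hat F((\xi - ke_1)/\epsilon)$, after the change of variable $\eta = (\xi - ke_1)/\epsilon$ the squared norm becomes
\begin{equation*}
\epsilon^{-2}\int_{\mathbb{R}^2} |E(ke_1 + \epsilon\eta)|^2 |\hat F(\eta)|^2\, d\eta,
\end{equation*}
so the task reduces to a uniform pointwise estimate on the symbol $E$ along this rescaled slice.

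Next I would establish the pointwise bound $|E(\xi)| \leq C_{k, p}|\xi - ke_1|^6 (1 + |\xi - ke_1|)^p$ on all of $\mathbb{R}^2$ by a case split. On the ball $|\xi - ke_1| \leq k/2$ the symbol $|\xi|^p$ is $C^\infty$ (since $k \neq 0$ keeps us uniformly away from the origin) and Taylor's theorem with integral remainder immediately gives $|E(\xi)| \leq C_{k, p}|\xi - ke_1|^6$, which in turn is dominated by $C_{k, p}|\xi - ke_1|^6(1 + |\xi - ke_1|)^p$. On the complement $|\xi - ke_1| > k/2$ the Taylor expansion is no longer asymptotically accurate, and I would instead use the crude bounds $|\xi|^p \leq C_{k, p}(1 + |\xi - ke_1|)^p$ and $|T_5(\xi)| \leq C_{k, p}(1 + |\xi - ke_1|)^5$ together with the observation $1 \leq (2/k)|\xi - ke_1|$ on this region to recover the same uniform estimate. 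Substituting $\xi = ke_1 + \epsilon\eta$ and (without loss) taking $\epsilon \leq 1$ then gives $|E(ke_1 + \epsilon\eta)|^2 \leq C_{k, p}\,\epsilon^{12}(1 + |\eta|)^{12 + 2p}$, and plugging this into the integral above produces exactly the claimed bound $C_{k,p}\epsilon^{10}\|F\|_{H^{6+p}}^2$.

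The main subtlety lies in the large-frequency region $|\xi - ke_1| > k/2$ rather than the small-frequency one: the Taylor expansion is not asymptotically accurate there, and the only way to recover the favorable power of $\epsilon$ is to pay an additional $p$ derivatives on $F$ through the high-frequency weight in the $H^{6+p}$ norm. This is precisely why $\|F\|_{H^{6+p}}$ rather than $\|F\|_{H^{6}}$ appears on the right-hand side of the lemma.
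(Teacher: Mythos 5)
Your proof is correct. It rests on the same two pillars as the paper's argument -- Plancherel plus the rescaling $\eta = (\xi - ke_1)/\epsilon$, a sixth-order Taylor remainder bound for the symbol $|\xi|^p$ near $\xi = ke_1$, and absorption of the far-frequency region into the extra derivatives carried by the $H^{6+p}$ norm -- but the organization is genuinely different. The paper works at the operator level: it first inserts the mode filter $\mathcal{B}_k$, disposes of $(I - \mathcal{B}_k)$ applied to each term by citing Lemma \ref{WavePacketCutoff} (which is exactly where the $\epsilon^{m-1}$-versus-$m$-derivatives tradeoff lives), and then on the support of $\hat{\mathcal{B}}_k$ justifies the expansion by the absolute uniform convergence of the full Taylor series of $|\xi|^p = \xi_1^p(1+\xi_2^2/\xi_1^2)^{p/2}$. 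You instead prove a single global pointwise symbol estimate $|E(\xi)| \leq C_{k,p}|\xi - ke_1|^6(1+|\xi - ke_1|)^p$ by a two-region case split, using only Taylor's theorem with integral remainder on the near region and crude polynomial bounds on the far region. Your route is more self-contained (it needs neither $\mathcal{B}_k$ nor Lemma \ref{WavePacketCutoff} nor convergence of the infinite series) and makes transparent exactly why the $p$ extra derivatives appear on the right-hand side; the paper's route has the advantage of reusing machinery ($\mathcal{B}_k$ and Lemma \ref{WavePacketCutoff}) that is needed elsewhere in the multiscale construction anyway. One cosmetic remark: the constant necessarily depends on $p$ as well as $k$, as your notation $C_{k,p}$ correctly records, even though the lemma's statement mentions only $k$.
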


\begin{proof}
We may use Lemma \ref{WavePacketCutoff} to reduce the estimate to wave packets of the form $\mathcal{B}_k F e^{ikx}$;  specifically we have
\begin{align*}
\left\|\left(|D|^p - \sum_{j = 0}^5 \epsilon^j (|D|^p)^{(j)} \right)(Fe^{ikx})\right\| & = \left\|\left(|D|^p - \sum_{j = 0}^5 \epsilon^j (|D|^p)^{(j)} \right)(\mathcal{B}_kFe^{ikx}) \right\| \\
& \quad + \left\|(I - \mathcal{B}_k)|D|^p(Fe^{ikx})\right\| \\
& \quad + \sum_{j = 0}^5 \epsilon^j \left\|(I - \mathcal{B}_k ) (|D|^p)^{(j)} (Fe^{ikx})\right\|
\end{align*}
The third term above is bounded by $C\epsilon^5\|F\|_{H^7}$, and by Lemma 1 the second term is bounded by $C\epsilon^5\|F\|_{H^{6 + p}}$.  Now writing $|\xi|^p = \xi_1^p(1 + \xi_2^2/\xi_1^2)^\frac{p}{2}$ with principal branch cuts, we have that the Taylor series of $|\xi|^p$ converges absolutely and uniformly on any compact contained in $\{\xi : |\xi_1 - k| < k \; \text{and} \; |\xi_2| < |\xi_1|\}$.  Since the support of $\hat{\mathcal{B}_k}$ is compactly contained in this set, we are justified in expanding the symbol of $|D|^p$ in Fourier space on the support of $\hat{\mathcal{B}}_k$, and doing so gives us the estimate
\begin{align*}
& \quad \left\|\left(|D|^p - \sum_{j = 0}^5 \epsilon^j (|D|^p)^{(j)} \right)(\mathcal{B}_kFe^{ikx}) \right\| \\
& = \left(\int_{|(\xi_1 - k, \xi_2)| \leq \frac{1}{2}k} \left| |\xi|^p - \sum_{j = 0}^5 P_k(\xi_1 - k, \xi_2)\right|^2 \left|\frac{1}{\epsilon^2}\hat{F}\left(\frac{\xi_1 - k}{\epsilon}, \frac{\xi_2}{\epsilon}\right)\right|^2 d\xi \right)^{1/2} \\
& \leq C_k \left(\int_{|(\xi_1 - k, \xi_2)| \leq \frac{1}{2}k} |(\xi_1 - k, \xi_2)|^{12} \left|\frac{1}{\epsilon^2}\hat{F}\left(\frac{\xi_1 - k}{\epsilon}, \frac{\xi_2}{\epsilon}\right)\right|^2 d\xi \right)^{1/2} \\
& \leq C\epsilon^5 \|F\|_{H^6}
\end{align*}
\end{proof}

%==============================================================================
%==============================================================================
\subsection{Multiscale Calculation of the Approximate Solution}
%==============================================================================
%==============================================================================

We now give the construction of the function $\tilde{z}$ that satisfies the equation
\begin{equation}\label{ApproximateEquation}
\tilde{z}_{tt} + |D|^p \tilde{z} + C_\omega \epsilon^2\tilde{z} |\tilde{z}/\epsilon|^{q - 1} = \epsilon^7 \tilde{\mathcal{R}}
\end{equation}
We seek a formal approximate solution in the form of an asymptotic series $$\tilde{z} \sim \sum_{n = 1}^{\infty} z^{(n)}$$ where each term $z^{(n)} = z^{(n)}(x_0, x_1, y_1, t_0, t_1, t_2)$ is a function of the multiscale variables.  

Substitute the asymptotic series into \eqref{ApproximateEquation}.  Since we insist that $z^{(1)} = Ae^{i(kx + \omega t)} =: Ae^{i\phi}$, collecting the terms of size $O(\epsilon)$ gives
$$(-\omega^2 + k^p)z^{(1)} = 0$$ which is solved by taking $\omega^2 = k^p$.  Next, the $O(\epsilon^2)$ terms yield
\begin{align}\label{Epsilon2Terms}
(\partial_{t_0}^2 + (|D|^p)^{(0)})z^{(2)} & = -(2\partial_{t_0}\partial_{t_1} - ipk|k|^{p - 2}\partial_{x_1})z^{(1)} \\
& = -i(2\omega\partial_{t_1} - pk|k|^{p - 2}\partial_{x_1})Ae^{i\phi}, \notag
\end{align}
where $\omega^\prime$ denotes the group velocity $$\omega^\prime = \frac{pk|k|^{p - 2}}{2\omega} = \frac12 p|k|^{\frac{1}{2}(p - 2)}$$  Now if the right hand side of \eqref{Epsilon2Terms} is not zero, then \eqref{Epsilon2Terms} has a resonant forcing term, in which case $z^{(2)}$ will necessarily have a contribution having an amplitude that grows linearly in time.  Since we are considering solutions over time scales on the order $O(\epsilon^{-2})$, such a term would eventually be of order $O(1)$, which would contradict the fact that we seek a solution in the form of an \textit{asymptotic} series.  Our assumption that the formal series solution is asymptotic over $O(\epsilon^{-2})$ time scales therefore forces the resonant in the right hand side of \eqref{Epsilon2Terms} to vanish.  We arrange for this to occur by assuming that $A = A(X, Y, T)$ alone, where we introduce the slow characteristic variable $X = x_1 + \omega^\prime t_1$, as well as the slow variables $Y = y_1$, $T = t_2$.  Since we will need the freedom later in the calculation, we also take $z^{(2)} = A^{(2)}(X, Y, T)e^{i\phi}$ where $A^{(2)}$ is yet to be determined.  Collecting the $O(\epsilon^3)$ terms gives us the equation
\begin{align*}
(\partial_{t_0}^2 + (|D|^p)^{(0)})z^{(3)} & = -(2\partial_{t_0}\partial_{t_1} - i\partial_{x_1})z^{(2)} \\
& \quad - \left(2\partial_{t_0}\partial_{t_2} + \partial_{t_1}^2 + (|D|^p)^{(2)}\right)z^{(1)} \\
& \quad - C_\omega A|A|^{q - 1} e^{i\phi}
\end{align*}
\begin{align*}
& = -\left(2i\omega\partial_T + \left((\omega^\prime)^2 - \frac12 p(p - 1) |k|^{p - 2}\right) \partial_X^2 - |pk|^{p - 2} \partial_Y^2 + C_\omega |A|^{q - 1}\right)Ae^{i\phi}
\end{align*}
We can arrange for the right hand side of this equation to vanish if we insist that $A$ satisfies the NLS equation
\begin{equation}\label{ActualHNLS}
2i\omega A_T + \frac14p(2 - p)|k|^{p - 2}A_{XX} - pk^{p - 2}A_{YY} + C_\omega A|A|^{q - 1} = 0
\end{equation}
Notice that for any choice of $k > 0$, the signs of the coefficients of $A_{XX}$ and $A_{YY}$ are opposite whenever $0 < p < 2$ and agree when $p > 2$; this allows us to treat both the defocusing elliptic case and the hyperbolic case simultaneously.  Notice also that $k^{p - 2} = \omega^{2 - \frac{4}{p}}$; therefore by multiplying this equation by $\omega^{\frac{4}{p} - 2}$, we see that this equation agrees with the rescaled equation \eqref{NormalizedNLS} provided we choose $C_\omega = \omega^{2 - \frac{4}{p}}$.

Next we must be able to construct in principle higher order correctors in order to construct an approximate solution with sufficiently small residual.  We proceed inductively, assuming that the correctors $z^{(1)} = Ae^{i\phi}$, $z^{(2)} = A^{(2)}e^{i\phi}$, \ldots, $z^{(n)} = A^{(n)}e^{i\phi}$ have been found.  Collecting the $O(\epsilon^{n + 1})$ terms yields an equation for $z^{(n + 1)}$ of the form
\begin{align}\label{zn+1Equation}
(\partial_{t_0}^2 + (|D|^p)^{(0)})z^{(n + 1)} & = -(2\partial_{t_0}\partial_{t_1} - i\partial_{x_1})z^{(n)} \notag \\
& \quad - \left(2\partial_{t_0}\partial_{t_2} + \partial_{t_1}^2 + (|D|^p)^{(2)}\right)z^{(n - 1)} \\
& \quad - \sum_{j_1 + j_2 + j_3 = n + 1}^{j_1 + j_2 \geq 2} \partial_{t_{j_1}} \partial_{t_{j_2}} z^{(j_3)} - \sum_{j = 3}^{n} (|D|^p)^{(j)} z^{(n + 1 - j)} \notag  \\
& \quad - \sum_{j_1 + j_2 + \cdots + j_{q } = q + n - 2} \left(\prod_{i = 1}^{\frac{q + 1}{2}} z^{(j_i)} \right) \left( \prod_{i = \frac{q + 3}{2}}^{q} \overline{z}^{(j_i)} \right) \notag
\end{align}
Since $A^{(n)} = A^{(n)}(X, Y, T)$, the first line on the right hand side of \eqref{zn+1Equation} vanishes.  The other terms on the right hand side of \eqref{zn+1Equation} can be arranged to vanish as well provided we choose $A^{(n - 1)}$ to satisfy the equation
\begin{align}\label{CorrectorHNLS}
& \quad 2i\omega A_T^{(n - 1)} + \frac14p(2 - p)k^{p - 2}A_{XX}^{(n - 1)} - pk^{p - 2} A_{YY}^{(n - 1)} \notag  \\
& \qquad = - \frac{q + 1}{2}A^{(n - 1)}|A|^{q - 1} - \left(\frac{q - 1}{2}\right)A^2\overline{A}^{(n - 1)}|A|^{q - 3} \notag \\
& \qquad - \sum_{j_1 + j_2 + j_3 = n + 1}^{j_1 + j_2 \geq 2} \partial_{t_{j_1}} \partial_{t_{j_2}} A^{(j_3)} - \sum_{j = 3}^{n} (|D|^p)^{(j)} A^{(n + 1 - j)} \\
& \qquad - \sum_{j_1 + j_2 + \cdots + j_{q} =q +  n - 2} \left(\prod_{i = 1}^{\frac{q + 1}{2}} A^{(j_i)} \right) \left( \prod_{i = \frac{q + 3}{2}}^{q } \overline{A}^{(j_i)} \right) \notag
\end{align}
For our purposes we need only take the approximate solution to six terms, and so we define
\begin{equation}\label{ApproximateSolutionFormula}
\tilde{z} = \sum_{n = 1}^6 \epsilon^n z^{(n)} = \sum_{n = 1}^6 \epsilon^n A^{(n)}e^{i\phi}
\end{equation}
where $A^{(1)} = A$.  For convenience we set $A^{(j)}(0) = 0$ for all $j \geq 2$.

In order to account for the number of derivatives of $A$ needed to construct the correctors in the approximate solution, we have the

\begin{lemma}\label{DerivativeCountingInApprox}
Suppose that $A$ is a solution to \eqref{ActualHNLS} that exists on a time interval $[0, \mathscr{T}]$ and satisfies $A(0) = A_0 \in H^N$.  Then the modulations $A^{(j)}$, $j = 2, \ldots, 6$ of the correctors satisfying \eqref{CorrectorHNLS} and $A^{(j)}(0) = 0$ also exist on $[0, \mathscr{T}]$ with values in $H^{N - 2j}$.  Therefore if we choose $N(s) > s + 21$ we have the bounds 
\begin{itemize}
\item[(i)]{$\|\Lambda^s_\epsilon \tilde{z}\| \leq C(k, \|A\|_{H^N})$}
\item[(ii)]{$\|\Lambda^s_\epsilon(\tilde{z} - z^{(1)})\| \leq C(k, \|A\|_{H^N})\epsilon$}
\item[(iii)]{$\|\Lambda^s_\epsilon \tilde{\mathcal{R}}\| \leq C(k, \|A\|_{H^N})\epsilon^{-1}$}
\item[(iv)]{$\|\Lambda^s_\epsilon \partial_t \tilde{\mathcal{R}}\| \leq C(k, \|A\|_{H^N})\epsilon^{-1}$}
\end{itemize}
\end{lemma}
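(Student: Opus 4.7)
The plan is to prove existence and regularity of the higher correctors $A^{(2)}, \ldots, A^{(6)}$ by strong induction on $j$, then extract the four stated bounds from this regularity by combining the wave-packet rescaling identity with the dispersive tail estimate of Lemma \ref{|D|ExpansionLemma}.

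For the inductive step, I would interpret \eqref{CorrectorHNLS} as a linear inhomogeneous Schr\"odinger equation for the unknown $A^{(j)}$: the dispersive principal part has constant coefficients, the two terms $-\frac{q+1}{2}|A|^{q-1}A^{(j)}$ and $-\frac{q-1}{2}A^2 |A|^{q-3}\overline{A}^{(j)}$ are linear multiplicative with coefficients in $L^\infty \cap H^N$ (provided by $A$), and the remaining forcing $F_j$ depends only on the previously constructed $A^{(m)}$ with $m < j$. The forcing $F_j$ is a finite sum of multilinear polynomial terms (estimated by Theorem \ref{Moser}), dispersive corrections $(|D|^p)^{(\ell)}A^{(m)}$ each losing $\ell$ spatial derivatives, and slow-time-derivative terms $\partial_{t_{j_1}}\partial_{t_{j_2}}A^{(j_3)}$ in which each slow-time derivative is traded for two spatial derivatives via the previously solved equation. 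A standard Duhamel-plus-Gr\"onwall argument with zero initial data then gives $A^{(j)} \in C([0,\mathscr{T}], H^{N-2j})$; the base case $A^{(1)} = A \in H^N$ is given by hypothesis. With $N > s + 21$ and $j \leq 6$, we obtain $A^{(j)} \in H^{N-12} \hookrightarrow H^{s+9}$, leaving adequate slack for both the Moser estimates and the $H^{p+6}$ input required by Lemma \ref{|D|ExpansionLemma}.

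Bounds (i) and (ii) follow from the wave-packet rescaling identity
\[
\|\Lambda^s_\epsilon\bigl(F(\epsilon x, \epsilon y) e^{ikx}\bigr)\|_{L^2_{xy}} = \epsilon^{-1}\|F\|_{H^s_{XY}},
\]
which is immediate from $\Lambda^s_\epsilon(F(\epsilon \cdot)e^{ikx}) = (\Lambda^s F)(\epsilon \cdot) e^{ikx}$ and a Plancherel change of variables. Applied termwise to $\tilde{z} = \sum_{n=1}^6 \epsilon^n A^{(n)}(\epsilon(x+\omega' t), \epsilon y, \epsilon^2 t) e^{i\phi}$, this produces $\|\Lambda^s_\epsilon\tilde{z}\| \leq \sum_n \epsilon^{n-1}\|A^{(n)}\|_{H^s}$; the $n=1$ term yields (i), and dropping it produces the extra factor of $\epsilon$ for (ii).

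For (iii) and (iv) I would substitute $\tilde{z}$ into the left-hand side of \eqref{ApproximateEquation}, expanding $|D|^p$ via Lemma \ref{|D|ExpansionLemma} and $\partial_t$ via the multiscale chain rule, then collect by powers of $\epsilon$. By construction of the $A^{(n)}$ through \eqref{CorrectorHNLS}, every contribution of order $\epsilon^1, \ldots, \epsilon^6$ cancels identically. The surviving residual then splits into two types: finitely many multiscale interaction terms of order $\epsilon^n$, $n \geq 7$, controlled by Moser plus the wave-packet identity; and the Taylor tail of $|D|^p$ applied to each $z^{(n)}$, of size $\epsilon^{n+5}\|A^{(n)}\|_{H^{6+p}}$, whose leading contribution at $n=1$ is $O(\epsilon^6)$. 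Dividing by $\epsilon^7$ yields $\|\Lambda^s_\epsilon\tilde{\mathcal{R}}\| \leq C\epsilon^{-1}$, with the negative power of $\epsilon$ arising solely from the leading dispersive tail. Bound (iv) follows from an identical analysis, using that $\partial_t e^{i\phi} = i\omega e^{i\phi}$ is $O(1)$ while $\partial_t$ applied to slow amplitudes contributes only extra factors of $\epsilon$ or $\epsilon^2$, so the $\epsilon$-scaling of the residual is unchanged. The main obstacle is the simultaneous bookkeeping required for (iii) and (iv): cleanly separating the finite list of multiscale remainders from the dispersive tail, verifying that each category scales at least as $\epsilon^6$ in $L^2_{xy}$, and confirming that the per-step derivative loss of $\lesssim 3$ accumulated over six inductive steps stays within the budget $N > s + 21$.
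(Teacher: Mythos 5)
Your proposal is correct and follows essentially the same route as the paper: Duhamel-plus-Gr\"onwall induction for the linear inhomogeneous corrector equations \eqref{CorrectorHNLS} with slow-time derivatives traded for (at most two) spatial derivatives per step, the wave-packet rescaling identity for (i)--(ii), and the multiscale cancellation together with the Taylor tail of $|D|^p$ from Lemma \ref{|D|ExpansionLemma} for (iii)--(iv), landing on the same budget $N(s) > s + 21$. The only nitpick is that the $\epsilon^{-1}$ in (iii) comes not solely from the dispersive tail but equally from measuring the $O(\epsilon^7)$-pointwise interaction terms in $L^2_{xy}$ (each wave packet loses one power of $\epsilon$ there), though your criterion that every residual category be $O(\epsilon^6)$ in $L^2_{xy}$ already accounts for this correctly.
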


\begin{proof}
Since we only need to consider the cases $p = 1, 3$, we require that $A^{(j)} \in H^9$ in order to apply Lemmas \ref{WavePacketCutoff} and \ref{|D|ExpansionLemma}.  Consider  \eqref{CorrectorHNLS} written using the DuHamel formulation.  Now the equations for the $A^{(n + 1)}$ are linear with the variable coefficients consisting of products of $A$ and $\overline{A}$.  By the induction hypothesis, we may replace any time derivatives falling on the factors $A, A^{(2)}, \ldots, A^{(n)}$ by the corresponding space derivatives and higher order nonlinearities.  Since each time derivative is replaced by at most two space derivatives in this way, the nonlinearity consists of factors depending on at most $2(n - j)$ derivatives of $A^{(j)}$, Gr\"onwall's inequality along with the Sobolev embedding $H^2 \hookrightarrow L^\infty$ with a quick induction argument implies that $\|A^{(n - 1)}\|_{H^m} \leq C(\|A\|_{H^{2(n - 1) + m}})$.  Since we need only set $p = 1, 3$, this implies that we need control over $\|A^{(6)}\|_{H^9} \leq C(\|A\|_{H^{19}})$, and similarly the bounds (i), (ii) follow provided we choose $N(s) \geq s + 19$.  In the same way, the highest number of derivatives appearing in the residual $\tilde{\mathcal{R}}$ is through $\partial_t^2$ acting on $A^{(6)}$, so that (iii) follows upon choosing $N(s) > s + 19$, and (iv) then follows upon taking $N(s) > s + 21$.
\end{proof}

In the process of deriving the ODE for $\lambda(t)$ we encounter the expression
$$\langle |D|^{2s}_\epsilon \tilde{z}_{tt}, -|D|^p\tilde{z}_t - \epsilon^{2}\partial_t\mathcal{B}_k(\tilde{z}|\tilde{z}/\epsilon|^{q - 1})\rangle$$
We explicitly calculate the leading order term of this expression here.

\begin{proposition}\label{LambdaPrimeMultiscale}
Assume that $A \in C([0, \mathscr{T}] : H^{N(s)})$ for $N(s) > s + 21$.  Then the following leading order identity holds:
\begin{equation*}
\left| \langle \Lambda^{2s}_\epsilon \tilde{z}_{tt}, -|D|^p \tilde{z}_t - \epsilon^{2}\partial_t\mathcal{B}_k(\tilde{z}|\tilde{z}/\epsilon|^{q - 1})\rangle - C_\omega \epsilon^2 \langle \Lambda^{2s}A, i A|A|^{q - 1} \rangle \right| \leq C(\|A_0\|_{H^N})\epsilon^3
\end{equation*}
\end{proposition}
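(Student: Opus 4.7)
The plan is to use the approximate equation from Proposition~\ref{FormalCalculation}(4) to convert the combination $-|D|^p\tilde z_t - \epsilon^2 \partial_t \mathcal B_k(\tilde z|\tilde z/\epsilon|^{q-1})$ into $\tilde z_{ttt}$ up to negligible error, so that the pairing becomes $\frac{1}{2}\partial_t \|\Lambda^s_\epsilon \tilde z_{tt}\|^2$. The wave packet structure then reduces this time derivative to that of $\omega^4\|A(\epsilon^2 t)\|_{H^s}^2$, and the NLS equation \eqref{NormalizedNLS} for $A$ supplies the $\langle \Lambda^{2s}A, iA|A|^{q-1}\rangle$ term.

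I would first differentiate equation \eqref{ApproximateEquation} in $t$ to obtain
\[
-|D|^p \tilde z_t - C_\omega \epsilon^2 \partial_t(\tilde z|\tilde z/\epsilon|^{q-1}) = \tilde z_{ttt} - \epsilon^7 \partial_t \tilde{\mathcal R}.
\]
The quantity in the proposition differs from the left-hand side above by the expression $\epsilon^2 \partial_t\bigl[(C_\omega - 1)(\tilde z|\tilde z/\epsilon|^{q-1}) + (I - \mathcal B_k)(\tilde z|\tilde z/\epsilon|^{q-1})\bigr]$; the residual term $\epsilon^7 \partial_t\tilde{\mathcal R}$ contributes $O(\epsilon^6)$ to the pairing using $\|\Lambda^s_\epsilon \tilde z_{tt}\| = O(1)$ from Proposition~\ref{FormalCalculation}(2) and $\|\Lambda^s_\epsilon \partial_t \tilde{\mathcal R}\| \leq C\epsilon^{-1}$ from Lemma~\ref{DerivativeCountingInApprox}(iv). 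The product $\tilde z|\tilde z/\epsilon|^{q-1}$ is itself a wave packet concentrated at frequency $(k,0)$, since the $(q+1)/2$ factors of $\tilde z$ near frequency $+k$ and the $(q-1)/2$ factors of $\overline{\tilde z}$ near frequency $-k$ convolve to a net frequency of $+k$. Lemma~\ref{WavePacketCutoff} combined with Lemma~\ref{RescaledMoserInequality} then bounds the $(I - \mathcal B_k)$ piece by an arbitrary power of $\epsilon$.

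Next I would recognize $\langle \Lambda^{2s}_\epsilon \tilde z_{tt}, \tilde z_{ttt}\rangle = \tfrac{1}{2}\partial_t \|\Lambda^s_\epsilon \tilde z_{tt}\|^2$. Applying $\partial_t^2$ to the multiscale ansatz $\tilde z = \sum_{n=1}^6 \epsilon^n A^{(n)} e^{i\phi}$, the leading contribution is $\tilde z_{tt} = -\omega^2 \epsilon A\, e^{i\phi} + O(\epsilon^2)$; the wave packet $L^2$ identity $\|\epsilon F(\epsilon\cdot)e^{ikx}\|_{L^2_{xy}} = \|F\|_{L^2_{XY}}$ combined with Lemma~\ref{WavePacketCutoff} then gives $\|\Lambda^s_\epsilon \tilde z_{tt}\|^2 = \omega^4 \|A(\epsilon^2 t)\|_{H^s}^2 + O(\epsilon)$. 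By the chain rule, $\partial_t \|A(\epsilon^2 t)\|_{H^s}^2 = \epsilon^2 \partial_T \|A\|_{H^s}^2 = 2\epsilon^2 \langle \Lambda^{2s}A, A_T\rangle$. Substituting $A_T$ from \eqref{NormalizedNLS}, the dispersive terms $A_{XX}$ and $A_{YY}$ contribute nothing because for any self-adjoint real-coefficient operator $L$ one has $\langle \Lambda^{2s}A, iLA\rangle = 0$; only the nonlinear term survives and produces a multiple of $\langle \Lambda^{2s}A, iA|A|^{q-1}\rangle$ which, after combining with the $\omega^4$ factor from the norm expansion and the $(C_\omega - 1)$ correction coming from the mismatch term above, matches the claimed coefficient $C_\omega \epsilon^2$.

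The principal obstacle is controlling the cancellation at leading order. The naive $O(\epsilon^2)$ contribution from pairing $-\omega^2 \epsilon Ae^{i\phi}$ against $-i\omega^3 \epsilon Ae^{i\phi}$ vanishes because it equals $\Re$ of $i$ times the real quantity $\int(1+|\xi|^2)^s|\hat A|^2\,d\xi$, so the actual $O(\epsilon^2)$ term can only arise from the NLS-driven evolution of $\|A\|_{H^s}^2$. Keeping all error terms at the required $O(\epsilon^3)$ level demands a careful application of the Moser estimate of Lemma~\ref{RescaledMoserInequality} and the $\hat L^1$ estimate of Lemma~\ref{L1HatEstimate} to all cross terms involving the higher-order correctors $A^{(j)}$ for $j \geq 2$, both in the expansion of $\|\Lambda^s_\epsilon \tilde z_{tt}\|^2$ and in the expansion of $\partial_t(\tilde z|\tilde z/\epsilon|^{q-1})$; this bookkeeping, together with accounting for the $X$- and $Y$-dependent chain rule contributions through the slow characteristic variable, is the bulk of the work.
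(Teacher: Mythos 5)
Your proposal follows essentially the same route as the paper's proof: differentiate the approximate equation \eqref{ApproximateEquation} in time to replace the pairing by $\langle \Lambda^{2s}_\epsilon \tilde{z}_{tt}, \tilde{z}_{ttt}\rangle$ up to a negligible residual controlled by Lemma \ref{DerivativeCountingInApprox}, then extract the leading term $C_\omega\epsilon^2\langle \Lambda^{2s}A, iA|A|^{q-1}\rangle$ from the multiscale structure, with the would-be lower-order contributions vanishing because the skew-adjoint part of $\partial_t$ pairs to zero. Your repackaging of the second step as $\tfrac12\partial_t\|\Lambda^s_\epsilon \tilde{z}_{tt}\|^2$ is an equivalent way of seeing that every surviving contribution carries a factor $\epsilon^2\partial_T$, and your explicit tracking of the $(C_\omega - 1)$ and $(I-\mathcal{B}_k)$ discrepancies addresses a notational looseness that the paper's own proof elides.
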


\begin{proof}
Upon differentiating \eqref{ApproximateEquation} with respect to $t$ we have the following identity:
\begin{equation}
-|D|^p\tilde{z}_t - \epsilon^{2}\partial_t\mathcal{B}_k(\tilde{z}|\tilde{z}/\epsilon|^{q - 1}) = \tilde{z}_{ttt} - \epsilon^7 \partial_t \tilde{\mathcal{R}}
\end{equation}
Now using Lemmas \ref{WavePacketCutoff} and \ref{DerivativeCountingInApprox}, we have
\begin{equation}
\left| \langle \Lambda^{2s}_\epsilon \tilde{z}_{tt}, -|D|^p\tilde{z}_t - \epsilon^{2}\partial_t\mathcal{B}_k(\tilde{z}|\tilde{z}/\epsilon|^{q - 1})\rangle - \langle \Lambda^{2s}_\epsilon \tilde{z}_{tt}, \tilde{z}_{ttt} \rangle \right| \leq C(\|A\|_{H^N})\epsilon^6
\end{equation}
and so it suffices to calculate the leading order term of $\langle \Lambda^{2s}_\epsilon \tilde{z}_{tt}, \tilde{z}_{ttt} \rangle$.  By \eqref{ApproximateSolutionFormula}, each time derivative $\partial_t$ acts on $\tilde{z}$ as $i\omega + \omega^\prime \epsilon \partial_X + \epsilon^2 \partial_T$.  Now the contribution
$$\left\langle \Lambda^{2s}_\epsilon \left(i\omega + \omega^\prime \epsilon \partial_X\right)^2\tilde{z}, \left(i\omega + \omega^\prime \epsilon \partial_X\right)^3\tilde{z} \right\rangle$$ vanishes, since $\Lambda^s_\epsilon$ is self-adjoint and $(i\omega + \omega^\prime \epsilon \partial_X)$ is skew-adjoint.  Therefore in the nonzero contributions there must be at least one occurrence of $\epsilon^2 \partial_T$.  The leading order contribution of what remains is therefore of the form
$$\langle \Lambda^{2s}_\epsilon (i\omega)^2\tilde{z}^{(1)}, (i\omega)^2 \epsilon^2 \partial_T \tilde{z}^{(1)} \rangle = C_\omega \epsilon^2 \langle \Lambda^{2s}A, i A|A|^{q - 1} \rangle$$
with all other contributions of size at most $C(\|A\|_{H^N})\epsilon^3$ by Lemma \ref{DerivativeCountingInApprox}.
\end{proof}

%==============================================================================
%==============================================================================
%==============================================================================
\section{Remainder Estimates}
%==============================================================================
%==============================================================================
%==============================================================================

Since the power nonlinearity of \eqref{zEquationOutline} is constructed to scale in $\epsilon$ just as a cubic nonlinearity, we may follow \cite{KSMCubicNonlinearityLongtimeRemainder} and show the a priori estimates of the remainder $r = z - \tilde{z}$ on the interval $[0, \mathscr{T}\epsilon^{-2}]$ with a straightforward application of Gr\"onwall's inequality.

\begin{lemma}\label{RemainderEstimates}
Suppose $\mathscr{T}_0 \leq \mathscr{T}$ is given.  Suppose further that a solution $z$ to 
\begin{equation}\label{zEquation}
\begin{cases}
z_{tt} + |D|^p z + \epsilon^{2}\mathcal{B}_k(z|z/\epsilon|^{q - 1}) + \mathcal{N} g(\lambda(t)) = 0 \\
z(0) = \mathcal{B}_k \tilde{z}(0) \\
z_t(0) = \mathcal{B}_k \tilde{z}_t(0)
\end{cases}
\end{equation}
exists in the sense that $z = \mathcal{B}_k z$ and $$(z, z_t, z_{tt}) \in (C^1 \times C^0 \times L^\infty)([0, \mathscr{T}_0\epsilon^{-2}], L^2)$$ satisfying the bound $\lambda(t) < 1$ for all $t \in [0, \mathscr{T}_0\epsilon^{-2}]$, and $z$ solves \eqref{zEquation} for every $(x, y) \in \mathbb{R}^2$ and \textbf{almost every} $t \in [0, \mathscr{T}_0\epsilon^{-2}]$.  Then there exists a constant $C = C(s, p, q, k, \mathscr{T}, \|A\|_{H^N})$ that is independent of $\mathscr{T}_0$ and an $\epsilon_0 > 0$ sufficiently small depending on $s, p, q, k, \mathscr{T}, \|A\|_{H^N}$ so that the following estimate of $r := z - \tilde{z}$ holds for all $0 < \epsilon < \epsilon_0$ and almost every $t \in [0, \mathscr{T}_0\epsilon^{-2}]$:
$$\|\Lambda^s_\epsilon r(t)\|^2 + \|\Lambda^s_\epsilon r_t(t)\|^2 + \|\Lambda^s_\epsilon r_{tt}(t)\|^2 \leq C\epsilon^3$$
\end{lemma}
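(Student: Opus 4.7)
The strategy is a Gr\"onwall-type energy estimate on $r := z - \tilde z$, in the spirit of \cite{KSMCubicNonlinearityLongtimeRemainder}. Subtracting the approximate equation of Proposition~\ref{FormalCalculation}(4) from \eqref{zEquation} gives
\begin{equation*}
r_{tt} + |D|^p r + \epsilon^2 \mathcal{B}_k\bigl(z|z/\epsilon|^{q-1} - \tilde z|\tilde z/\epsilon|^{q-1}\bigr) - \epsilon^2 (I - \mathcal{B}_k)\bigl(\tilde z|\tilde z/\epsilon|^{q-1}\bigr) + \mathcal N g(\lambda) = -\epsilon^7 \tilde{\mathcal R}
\end{equation*}
for almost every $t$. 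The initial data $r(0), r_t(0)$ are $O(\epsilon^m)$ in $\Lambda^s_\epsilon$-norm for every $m$ by Lemma~\ref{WavePacketCutoff}, and the same lemma renders the $(I - \mathcal{B}_k)$-correction above negligible.

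I will then run the standard wave-type energy estimate on $E(t) := \tfrac{1}{2}\|\Lambda^s_\epsilon r_t\|^2 + \tfrac{1}{2}\||D|^{p/2}\Lambda^s_\epsilon r\|^2$ by pairing the equation for $r$ with $\Lambda^{2s}_\epsilon r_t$. The delicate term is the nonlinear difference: since $q$ is odd, I expand it as $\epsilon^{-(q-1)}$ times a telescoping sum of $q$ terms, each built from $r$ (or $\bar r$) and $q-1$ factors drawn from $\{z,\bar z, \tilde z, \bar{\tilde z}\}$. Applying Lemma~\ref{L1HatEstimate} with $r$ placed in $L^2$ and the other factors in $\hat L^1$ yields the bound $C\epsilon^{-(q-1)} \|\Lambda^s_\epsilon r\|\,(\|\Lambda^s_\epsilon \tilde z\|_{\hat L^1} + \|\Lambda^s_\epsilon z\|_{\hat L^1})^{q-1}$. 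Direct computation from the wave packet structure (using Lemma~\ref{DerivativeCountingInApprox} and the fact that $\hat L^1$ is invariant under the spatial rescaling $x \mapsto \epsilon x$, so that the amplitude factor $\epsilon$ survives) gives $\|\Lambda^s_\epsilon \tilde z\|_{\hat L^1} = O(\epsilon)$. For $z$ I write $r = \mathcal B_k r - (I - \mathcal B_k)\tilde z$: the first piece is frequency localized in a fixed ball, so $\|\Lambda^s_\epsilon \mathcal{B}_k r\|_{\hat L^1} \leq C_k \|\Lambda^s_\epsilon r\|_{L^2} \leq C_k E^{1/2}$ by Cauchy--Schwarz on the support of $\mathcal B_k$, and the second piece is negligible by Lemma~\ref{WavePacketCutoff}. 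Combining, $\|\Lambda^s_\epsilon z\|_{\hat L^1} = O(\epsilon)$ as long as $E^{1/2} \leq \epsilon$, which bootstraps a nonlinear contribution to $\dot E$ of size at most $C\epsilon^2 E$.

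The remaining terms are quick: $|g(\lambda)| < 1$ on $\lambda(t)<1$ together with the explicit powers $\omega^2\epsilon^{q+4}$ and $\omega^2\epsilon^5(\epsilon^2 t+T_1)$ in $\mathcal N$ give $\|\Lambda^s_\epsilon \mathcal N g(\lambda)\| \leq C(\mathscr T + T_1)\omega^2\epsilon^5$, and the residual contributes $\epsilon^7 \|\Lambda^s_\epsilon \tilde{\mathcal R}\| \leq C\epsilon^6$ by Proposition~\ref{FormalCalculation}(4). Altogether $\dot E \leq C\epsilon^2 E + C(\omega^2\epsilon^5 + \epsilon^6) E^{1/2}$, and Gr\"onwall's inequality on $[0, \mathscr T_0\epsilon^{-2}]$ yields $E(t)^{1/2} \leq C\epsilon^3$, closing the bootstrap. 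The $\Lambda^s_\epsilon r_{tt}$ bound then follows by solving for $r_{tt}$ algebraically from the $r$-equation and bounding each term in $\Lambda^s_\epsilon$-norm, using once more that $\||D|^p \Lambda^s_\epsilon \mathcal{B}_k r\| \leq C_k\|\Lambda^s_\epsilon r\|$ by frequency localization. The main obstacle is closing the $\hat L^1$ bootstrap: without the $\epsilon$ coming from $\|\Lambda^s_\epsilon z\|_{\hat L^1}$ (which in turn requires the $\mathcal B_k$-localization of $z$ built into \eqref{zEquationOutline}), the factor $\epsilon^{-(q-1)}$ from the power nonlinearity would be ruinous, and Lemma~\ref{L1HatEstimate} is exactly the tool designed to overcome this.
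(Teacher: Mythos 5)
Your proposal is correct and follows essentially the same route as the paper's proof: the same remainder equation, the same wave-type energy paired with $\Lambda^{2s}_\epsilon r_t$, the same use of Lemma \ref{L1HatEstimate} with the $O(\epsilon)$ bound on $\|\Lambda^s_\epsilon \tilde{z}\|_{\hat L^1}$ and the $\mathcal{B}_k$-localization of $z$ to tame the $\epsilon^{3-q}$ prefactor, a Gr\"onwall-plus-continuity bootstrap over $[0,\mathscr{T}_0\epsilon^{-2}]$, and recovery of the $r_{tt}$ bound directly from the equation. The only cosmetic differences are your bootstrap threshold ($E^{1/2}\le\epsilon$ versus the paper's $S(\mathcal{T})\le\epsilon^6$) and a slightly cruder (but still sufficient) $O(\omega^2\epsilon^5)$ bound on the penalization term.
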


\begin{remark}\label{HighDerivsNotWavePackets}
Heuristically, we expect to control $r$ and $r_t$ using energy based arguments, and $r_{tt}$ using the equation governing $r$.  This is possible since the penalization term depends only on $\lambda$ through the term $g(\lambda(t))$, which is uniformly bounded as a function of $\lambda$.  Differentiating the progenitor equation in time shows that $\partial_t^j r_{tt}$ generally depends on $g^{(j)}(\lambda(t))$, over which we have no control for $j \geq 1$ since we expect that $\lambda$ will be near 1.  Therefore such higher derivatives $\partial_t^j z_{tt}$ do not remain in the modulation regime over NLS time scales.  Indeed this is necessary to the success of our program, since constructing the ODE \eqref{CartoonODE} depends on this occurring.
\end{remark}

\begin{proof}
The governing equation  \eqref{zEquation} reads
\begin{equation}\label{zEquationBeforeTStar}
z_{tt} + |D|^p z = -\epsilon^{2} \mathcal{B}_k(z |z/\epsilon|^{q - 1}) - \mathcal{B}_k \mathcal{N} g(\lambda(t))
\end{equation}
Subtracting from this the approximate equation
\begin{equation}\label{zTildeEquationBeforeTStar}
\tilde{z}_{tt} + |D|^p \tilde{z} = -\epsilon^{2} \tilde{z} |\tilde{z}/\epsilon|^{q - 1} + \epsilon^7\tilde{\mathcal{R}}
\end{equation}
gives the governing for the remainder $r := z - \tilde{z}$ which reads
\begin{align}\label{RemainderEquation}
r_{tt} + |D|^p r & = \epsilon^{2} \mathcal{B}_k(\tilde{z} |\tilde{z}/\epsilon|^{q - 1} - z|z/\epsilon|^{q - 1}) - \mathcal{B}_k \mathcal{N} g(\lambda(t))  \\
& \quad - \epsilon^7 \tilde{\mathcal{R}} + \epsilon^{2} (I - \mathcal{B}_k)(\tilde{z} |\tilde{z}/\epsilon|^{q - 1})\notag
\end{align}
Consider the energy
\begin{equation}\label{RemainderEnergy}
R(t) = \frac12 \|\Lambda_\epsilon^s  r_t\|^2 + \frac12 \|\,|D|^\frac{p}{2} \Lambda_\epsilon^s r\|^2
\end{equation}
By applying $\Lambda_\epsilon^s$ to \eqref{RemainderEquation} and taking the inner product with $\Lambda_\epsilon^s r_t$, we see that $R(t)$ is absolutely continuous and obeys the following differential inequality:
\begin{align}\label{RemainderEnergyEstimateStart}
\frac{dR}{dt} & \leq \epsilon^{2}|\langle \Lambda_\epsilon^s  (z |z/\epsilon|^{q - 1} - \tilde{z} | \tilde{z}/\epsilon|^{q - 1}), \Lambda_\epsilon^s r_t \rangle| \notag \\
& \quad + |\langle \Lambda_\epsilon^s  \mathcal{B}_k \mathcal{N}, \Lambda_\epsilon^s r_t \rangle| \\
& \quad + |\langle \Lambda_\epsilon^s \epsilon^7 \tilde{\mathcal{R}} , \Lambda_\epsilon^s r_t \rangle| \notag \\
& \quad + \epsilon^{2} |\langle \Lambda_\epsilon^s (I - \mathcal{B}_k)(\tilde{z} |\tilde{z}/\epsilon|^{q - 1}), \Lambda_\epsilon^s r_t \rangle | \hspace{1cm} \text{a.e. } t \in [0, \mathscr{T}_0\epsilon^{-2}] \notag \\ 
& := T_1 + T_2 + T_3 + T_4 \notag
\end{align}

In order to bound those terms, first note that Lemma \ref{WavePacketCutoff} as well as the fact that $z = \mathcal{B}_k z$ allows us to estimate:
\begin{align}\label{RemainderL1Hat}
\|r\|_{\hat{L}^1} & \leq \|\mathcal{B}_k r\|_{\hat{L}^1} + \|(I - \mathcal{B}_k)(z - \tilde{z})\|_{\hat{L}^1} \notag \\
& \leq C\|\mathcal{B}_k r\|_{H^2} + \|(I - \mathcal{B}_k)\tilde{z}\|_{\hat{L}^1} \notag \\
& \leq C_{k, p}\|\mathcal{B}_k |D|^\frac{p}{2} r\|_{L^2} + C\|(I - \mathcal{B}_k)\tilde{z}\|_{H^2} \\
& \leq C_{k, p}R(t)^\frac12 + C\epsilon^6 \notag 
\end{align}
We will only estimate $T_1$ in detail, since the other terms are strictly easier.  We have writing $z = \tilde{z} + r$ and using Lemma \ref{L1HatEstimate} and \eqref{RemainderL1Hat} that
\begin{align*}
T_1 & \leq \epsilon^2 \|\Lambda^s_\epsilon r_t\| \, \|\Lambda_\epsilon^s  (z |z/\epsilon|^{q - 1} - \tilde{z} | \tilde{z}/\epsilon|^{q - 1})\| \\
& = \epsilon^{3 - q} \|\Lambda^s_\epsilon r_t\| \, \|\Lambda_\epsilon^s  (z |z|^{q - 1} - \tilde{z} | \tilde{z}|^{q - 1})\| \\
& \leq C_{s, q} \epsilon^{3 - q} \|\Lambda^s_\epsilon r_t\| \, \|\Lambda^s_\epsilon r\| \sum_{j = 1}^{q} \|\Lambda^s_\epsilon r\|_{\hat{L}^1}^{j - 1} \|\Lambda^s_\epsilon \tilde{z}\|_{\hat{L}^1}^{q - j} \\
& \leq C_{s, q, k, p, \|A\|_{H^{s + 2}}} \epsilon^{3 - q} R(t) \sum_{j = 1}^{q} \left(R(t)^\frac12 + \epsilon^6 \right)^{j - 1} \epsilon^{q - j} \\
& \leq C_{s, q, k, p, \|A\|_{H^N}} \epsilon^2 R(t) \sum_{j = 1}^{q} \left(\epsilon^{-1}R(t)^\frac12 + \epsilon^5 \right)^{j - 1} \\
& \leq C_{s, q, k, p, \|A\|_{H^N}} \epsilon^2 R(t) \left(1 + \epsilon^{-1}R(t)^\frac12\right)^{q - 1}
\end{align*}
From this point on in the proof we suppress the dependence of constants $C$ on $s, q, k, p, \|A_0\|_{H^N}, \mathscr{T}$ for brevity.  Applying similar estimates to $T_2$, $T_2$, $T_4$ and noting that $|g(\lambda)| < 1$ and $q + 4 \geq 7$ then yields 
\begin{align*}
\frac{dR}{dt} & \leq C \epsilon^2 R(t)\epsilon^{4 - q}(1 + \epsilon^{-1}R(t)^\frac12)^{q - 1} \\
& \quad + C\epsilon^7(1 + \epsilon^{-1}R(t)^\frac12)^{q - 1})(1 + R(t)^\frac12)R(t)^\frac12 \\
& \quad + C R(t)^\frac12 \epsilon^6 & \text{a.e. } t \in [0, \mathscr{T}_0\epsilon^{-2}] \\
& \leq C R(t)^\frac12 \left( \epsilon^6 + C\epsilon^7(1 + \epsilon^{-1}R(t)^\frac12)^{q - 1}\right) + (1 + \epsilon^{-1}R(t)^\frac12)^{q - 1}\epsilon^2R(t) & \text{a.e. } t \in [0, \mathscr{T}_0\epsilon^{-2}]
\end{align*}
where we have used Lemma \ref{WavePacketCutoff} to estimate $T_4$, as well as lost one power of $\epsilon$ using Lemma \ref{DerivativeCountingInApprox} to estimate $\tilde{\mathcal{R}}$ in $L^2$.  For $0 \leq t \leq \mathcal{T}$, define $$S(\mathcal{T}) = \sup_{0 \leq t \leq \mathcal{T}} R(t)$$  Then we have in addition that for $0 \leq t \leq \mathcal{T} \leq \mathscr{T}_0\epsilon^{-2}$ that
\begin{align*}
\frac{dR}{dt}(t) & \leq C S(\mathcal{T})^\frac12 \left( \epsilon^6 + C\epsilon^7(1 + \epsilon^{-1}S(\mathcal{T})^\frac12)^{q - 1}\right) \\
& \quad + (1 + \epsilon^{-1}S(\mathcal{T})^\frac12)^{q - 1}\epsilon^2R(t) \hspace{1cm} \text{a.e. } t \in [0, \mathscr{T}_0\epsilon^{-2}]
\end{align*}
Integrating this equation and taking the supremum over all $t \in [0, \mathcal{T}]$ then yields the following bound for every $0 \leq t \leq \mathcal{T}$:
\begin{align*}
S(\mathcal{T}) & \leq \left(R(0) + CS(\mathcal{T})^\frac12\left( \epsilon^6 + C\epsilon^7(1 + \epsilon^{-1}S(\mathcal{T})^\frac12)^{q - 1}\right) t \right) \exp\left((1 + \epsilon^{-1}S(\mathcal{T})^\frac12)^{q - 1}\epsilon^2 t \right) \\
& \leq \left(C\epsilon^7 + CS(\mathcal{T})^\frac12\left( \epsilon^4 + C\epsilon^5(1 + \epsilon^{-1}S(\mathcal{T})^\frac12)^{q - 1}\right) \right) \exp\left(C(1 + \epsilon^{-1}S(\mathcal{T})^\frac12)^{q - 1} \right)
\end{align*}
where we have used $\mathcal{T} \leq \mathscr{T}_0\epsilon^{-2} \leq \mathscr{T}\epsilon^{-2}$ as well as Lemma \ref{WavePacketCutoff} to estimate that $R(0) \leq C\epsilon^7$.

We now begin a proof by continuity.  Let $\mathcal{T}_* = \inf\{\mathcal{T} \in [0, \mathscr{T}_0\epsilon^{-2}] : S(\mathcal{T}) > \epsilon^6\}$, which is well-defined since $S(T)$ is continuous.  If $\mathcal{T}_* = \mathscr{T}_0\epsilon^{-2}$, then we are done.  If not, then evaluating the above bound at $\mathcal{T} = \mathcal{T}_*$ would read
\begin{align*}
\epsilon^6 & = S(\mathcal{T}_*) \\ 
& \leq \left(C\epsilon^7 + CS(\mathcal{T}_*)^\frac12\epsilon^4 \right) \exp\left(C(1 + \epsilon^{-1}S(\mathcal{T}_*)^\frac12)^{q - 1} \right) \\
& \leq \left(C\epsilon^7 \right) \exp\left(C(1 + \epsilon^2)^\frac12)^{q - 1} \right) \\
& < \epsilon^6
\end{align*}
where we have chosen $\epsilon_0 > 0$ sufficiently small depending on $s, p, q, k, \mathscr{T}, \|A\|_{H^N}$.  This contradiction establishes control of the first two terms in the estimate of this proposition. Control of $\|\Lambda^s_\epsilon r_{tt}(t)\|$ then follows by applying $\Lambda^s_\epsilon$ to \eqref{RemainderEquation} and estimating directly in $L^2$.  The necessary estimates parallel those just performed in the above energy estimate, and so we omit the details.
\end{proof}

%==============================================================================
%==============================================================================
%==============================================================================
\section{Local Well-Posedness of the Progenitor Equation}
%==============================================================================
%==============================================================================
%==============================================================================

At this point we have computed an approximate solution $\tilde{z} = \epsilon Ae^{i\phi} + O(\epsilon^2)$ on the interval $[0, \mathscr{T}\epsilon^{-2}]$ satisfying the approximate progenitor equation \eqref{ApproximateEquation} and constructed through a solution $A$ of the HNLS equation \eqref{ActualHNLS} on the interval $[0, \mathscr{T}]$ with $A_0 \in C([0, \mathscr{T}] : H^N)$ and subject to Assumption \ref{AssumptionsOnA}.  We have also shown that if one \textit{assumes} an appropriate form of existence of a solution $z(t)$ to \eqref{zEquationOutline} on $[0, \mathscr{T}\epsilon^{-2}]$, then it remains within $O(\epsilon^3)$ of $\tilde{z}(t)$ in Sobolev space on the interval $[0, \mathscr{T}\epsilon^{-2}]$.  We now address the well-posedness of \eqref{zEquationOutline} to which $\tilde{z}$ serves as an approximation.

The reason that long-time well-posedness of \eqref{zEquationOutline} is not immediate is the appearance of $g(\lambda(t))$ and its derivatives.  If a time $t^*$ is reached at which $\lambda(t^*) = 1$, then one cannot construct a continuation of the solution beyond $t^*$ using any kind of contraction mapping arguments since $g^\prime(\lambda)$ is not defined for $\lambda \geq 1$.  Therefore, on any interval on which we would like this equation to make sense, we must show that $\lambda(t)$ stays uniformly bounded away from 1.  This problem is solved by the ODE argument made possible by the crucial choice $\mathcal{N}$ of coefficient in the penalization term together with the growth condition of Assumption 1.

However, even if one grants that $\lambda(t)$ remains uniformly bounded away from 1, the introduction of $\lambda(t)$ in the nonlinearity forces the equation to be fully nonlinear, which in turn prevents us from using a straightforward contraction mapping argument to establish local well-posedness.  Therefore we give a more roundabout construction that produces a weaker notion of solution\footnote{This notion of solution is reminiscent of a PDE analogue of ``solutions of extended type'' in the 
Carath\'eodory existence theorem for ODEs; see p. 42 as well as Theorem 1.1 in Chapter 2 of \cite{CoddingtonLevinsonODE}.} for \eqref{zEquationOutline}, but is nonetheless sufficient for our purposes.

Since, in general, we will need to construct a solution at some given time $\mathscr{T}_1\epsilon^{-2} \in [0, \mathscr{T}\epsilon^{-2}]$, we apply a translation in time $t \mapsto t - \mathscr{T}_1\epsilon^{-2}$ so that the initial conditions begin at $t = 0$.  The precise statement of the local well-posedness result is as follows:

\begin{proposition}\label{zLocalWellPosed}
(Local Well-Posedness of \eqref{zEquationOutline})  Fix $\nu \in (1, 2)$, and let $\mathscr{T}_1 \leq \mathscr{T}$ be given.  Denote
\begin{equation}
\lambda(f) = \frac{\|\Lambda^{s}_\epsilon f\|^2}{\nu^2 \omega^4 \|\Lambda^s A_0\|^2}
\end{equation}
Then there is an $\epsilon_0 > 0$ depending on $s, p, q, k, \|A_0\|_{H^s}, \nu$ so that for all $0 < \epsilon < \epsilon_0$ the following holds: 
\begin{itemize}
\item[(a)]{Assume the initial data $z(0)$ in \eqref{zEquationOutline}.  Then there exists a $\mathfrak{w}_0$ satisfying the compatibility condition 
\begin{equation}\label{IdealW0Definition}
\mathfrak{w}_0 = -|D|^p z(0) - \epsilon^{2}\mathcal{B}_k(z(0)|z(0)/\epsilon|^{q - 1}) - \mathcal{N}(0) g(\lambda(\mathfrak{w}_0))
\end{equation}
where
\begin{equation}
\mathcal{N}(0) := \mathcal{B}_k \omega^2 \bigr(\epsilon^{q + 4} z(0) + 2i(\epsilon^7 t + \epsilon^5(T_1 + \mathscr{T}_1))z(0)|z(0)|^{q - 1}\bigr)
\end{equation}
as well as the bound $\lambda(\mathfrak{w}_0) \leq \frac{1}{2} + \frac{1}{2\nu^2} < 1$.}
\item[(b)]{Assume that \eqref{zEquationOutline} has initial data $(u_0, v_0, w_0) = (\Lambda^s_\epsilon z(0), \Lambda^s_\epsilon z_t(0), \Lambda^s_\epsilon z_{tt}(0))$ which satisfies $u_0 = \mathcal{B}_k u_0$, $v_0 = \mathcal{B}_k v_0$, $w_0 = \mathcal{B}_k w_0$, the bounds
\begin{equation}
\max(\|u_0\|, \omega^{-1}\|v_0\|, \omega^{-2}\|w_0\|) \leq 2 \|A_0\|_{H^s},
\end{equation}
the bound
\begin{equation}
\|\Lambda^{-s}_\epsilon u_0\|_{L^\infty} \leq 2\epsilon\|A_0\|_{L^\infty},
\end{equation}
as well as satisfying the compatibility condition \eqref{IdealW0Definition}.  
Suppose further that $\lambda_0 := \lambda(w_0) < 1$ satisfies
\begin{equation}
g^\prime(\lambda_0) \leq \frac{16\epsilon^{-(q + 4)}}{\mathscr{C}}
\end{equation}
for some universal constant $\mathscr{C} > 1$ to be chosen in the course of the proof.  Then there is a $\mathbf{t} > 0$ for which there is a lower bound $t_0 < \mathbf{t}$ depending only on $\epsilon, k, s, p, q, 1 - \lambda_0$ so that
\begin{itemize}
\item[(i)]{There is a $z$ so that $\Lambda^s_\epsilon z_{tt}(x, y, t)$ is a bounded measurable function on $\mathbb{R}^2 \times [0, \mathbf{t}]$ with $\Lambda^s_\epsilon z_{tt} \in L^\infty([0, \mathbf{t}], L^2)$ which solves \eqref{zEquationOutline} for every $(x, y) \in \mathbb{R}^2$ and almost every $t \in [0, \mathbf{t}]$, and in particular solves the equation at $t = \mathbf{t}$.}
\item[(ii)]{For every $t \in [0, \mathbf{t}]$ we have $z(t) = \mathcal{B}_k z(t)$.}
\item[(iii)]{For every $t \in [0, \mathbf{t}]$ we have 
\begin{equation}\label{SolutionSize}
\max(\|\Lambda^s_\epsilon z(t)\|_{C([0, \mathfrak{t}], L^2)}, \omega^{-1}\|\Lambda^s_\epsilon z_t(t)\|_{C([0, \mathfrak{t}], L^2)}^2) \leq 3\|A_0\|_{H^s}
\end{equation}}
\item[(iv)]{For every $t \in [0, \mathbf{t}]$, we have the bound \begin{equation}\label{LocalLambdaBound}
|\lambda(z_{tt}(t)) - \lambda_0| \leq \frac12(1 - \lambda_0)
\end{equation}}
\end{itemize}
}
\end{itemize}
\end{proposition}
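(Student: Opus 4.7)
I would treat the compatibility condition as an implicit-function problem. Setting $F_0 := -|D|^p z(0) - \epsilon^{2}\mathcal{B}_k(z(0)|z(0)/\epsilon|^{q-1})$, the equation defining $\mathfrak{w}_0$ reads $\mathfrak{w}_0 = F_0 - \mathcal{N}(0)\, g(\lambda(\mathfrak{w}_0))$, so I would apply the contraction mapping theorem to $\Phi(\mathfrak{w}) := F_0 - \mathcal{N}(0)\, g(\lambda(\mathfrak{w}))$ on a closed ball $B \subset L^2$ on which $\lambda(\mathfrak{w}) \leq \tfrac{1}{2} + \tfrac{1}{2\nu^2}$. Since $z(0) = \mathcal{B}_k\tilde{z}(0) \approx \epsilon A_0(\epsilon x, \epsilon y)e^{ikx}$ is essentially a wave packet, Lemma \ref{|D|ExpansionLemma} shows $|D|^p$ acts to leading order as multiplication by $k^p = \omega^2$, so $\|\Lambda^s_\epsilon F_0\| \leq \omega^2\|A_0\|_{H^s} + O(\epsilon)$ (with the semilinear piece of $F_0$ estimated by Lemma \ref{RescaledMoserInequality}); this yields $\lambda(F_0) \leq 1/\nu^2 + O(\epsilon)$, which is strictly less than $\tfrac{1}{2} + \tfrac{1}{2\nu^2}$ for $\nu > 1$. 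The correction $\mathcal{N}(0)\, g(\lambda(\mathfrak{w}))$ is a negligible perturbation because $\mathcal{N}(0)$ carries the small factors $\omega^2\epsilon^{q+4}$ and $\omega^2\epsilon^5$ while $g$ is uniformly bounded by $1$, and this same smallness turns $\Phi$ into a strict contraction on $B$ for $\epsilon_0$ small, producing the unique fixed point.

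\textbf{Plan for part (b), stage one.} The central obstacle is the full nonlinearity of \eqref{zEquationOutline}: $z_{tt}$ appears both as the leading term and implicitly through $g(\lambda(z_{tt}))$, so Picard iteration does not apply directly. My plan is a two-stage construction. In the first stage I would fix $u$ in a small $L^2$ ball around $u_0$ and $t \in [0, \mathbf{t}]$ as parameters and repeat the fixed-point argument of part (a), now with coefficients depending on $u$ and $t$, to solve the implicit equation
\begin{equation*}
w = -\Lambda^s_\epsilon|D|^p\Lambda^{-s}_\epsilon u - \epsilon^{2}\Lambda^s_\epsilon\mathcal{B}_k\bigl(\Lambda^{-s}_\epsilon u\,|\Lambda^{-s}_\epsilon u/\epsilon|^{q-1}\bigr) - \Lambda^s_\epsilon\mathcal{N}(\Lambda^{-s}_\epsilon u, t)\, g(\lambda(w))
\end{equation*}
for $w$, producing a map $W[u, t]$. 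The delicate quantitative point is the Lipschitz dependence of $W$ on $u$ and $t$: as long as $\lambda(w)$ stays within $\tfrac{1}{2}(1-\lambda_0)$ of $\lambda_0$, one has $g^\prime \leq Cg^\prime(\lambda_0) \leq 16C\epsilon^{-(q+4)}/\mathscr{C}$ by hypothesis, and this potentially diverging factor is exactly compensated by the small factor $\omega^2\epsilon^{q+4}$ inside $\mathcal{N}$, leaving a net Lipschitz constant of order $C/\mathscr{C}$ that is uniform in $\epsilon$ as $\epsilon \to 0$.

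\textbf{Plan for part (b), stage two.} With $W[u, t]$ in hand, I would solve the integral system
\begin{equation*}
u(t) = u_0 + tv_0 + \int_0^t (t - \tau)\, W[u(\tau), \tau]\, d\tau, \qquad v(t) = v_0 + \int_0^t W[u(\tau), \tau]\, d\tau
\end{equation*}
by a Picard contraction on $C([0, \mathbf{t}], L^2)^2$, taking $\mathbf{t}$ sufficiently small in terms of the Lipschitz constant of $W$, and hence of $1 - \lambda_0$. The resulting $(u, v)$ lies in $C^1 \times C^0$ in $t$, while $w(t) = W[u(t), t]$ lies only in $L^\infty$, which is precisely the Carath\'eodory-type regularity claimed in (i). Conclusion (ii) is automatic because $\mathcal{B}_k$ commutes with every operator appearing in $\Phi$ and in the integral system, so mode localization is preserved along the iteration. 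Conclusions (iii) and (iv) are continuity statements that hold after further shrinking $\mathbf{t}$ so that $\|u(t)\|$ and $\omega^{-1}\|v(t)\|$ stay within $3\|A_0\|_{H^s}$ and $\lambda(w(t))$ varies by at most $\tfrac{1}{2}(1-\lambda_0)$ from $\lambda_0$.

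\textbf{Anticipated main difficulty.} The hardest part is the quantitative bookkeeping in the inner fixed-point problem. When two candidates $w_1, w_2$ differ, $|\lambda(w_1) - \lambda(w_2)|$ scales like $(\|w_1\| + \|w_2\|)\,\|w_1 - w_2\|/(\omega^4\|A_0\|^2)$, and this must be multiplied by $g^\prime$ (large near $\lambda = 1$) and by $\|\Lambda^s_\epsilon\mathcal{N}\|$ to control the self-correction term. The appearance of the hypothesis $g^\prime(\lambda_0) \leq 16\epsilon^{-(q+4)}/\mathscr{C}$ is exactly engineered so that the $\epsilon^{q+4}$ inside $\mathcal{N}$ cancels the potentially large $g^\prime$; once this cancellation is in place, the dependence of $W$ on $(u, t)$ is Lipschitz with constants depending only on the announced parameters, and all four conclusions follow from standard fixed-point machinery.
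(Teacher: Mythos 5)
Your part (a) is essentially the paper's argument: a contraction for the self-consistency of $g(\lambda(\mathfrak{w}_0))$ on a small $L^2$ ball, with the smallness supplied by the $\omega^2\epsilon^{q+4}$ prefactor in $\mathcal{N}(0)$ and with $\lambda$ initially bounded $O(1)$ away from $1$ so that $g'$ causes no trouble. For part (b), however, you take a genuinely different route. The paper never resolves the implicit dependence on $z_{tt}$ pointwise in time; instead it discretizes the dangerous coefficient, replacing $g(\lambda(z_{tt}(t)))$ by the piecewise-constant $g_*(m_n\lambda(t))$ built from subinterval averages (with $g$ cut off as in \eqref{DefnGStar} so that it is everywhere defined), solves a time-differentiated semilinear system on each subinterval via a three-component contraction (Lemma \ref{NanoLocalWellposedness}) in which the jump value $w_j^+$ of $z_{tt}$ across each node must itself be constructed as part of the fixed point, obtains bounds uniform in the number $n$ of subintervals from the almost-conserved Hamiltonian (Lemma \ref{AlmostConserved}), and finally extracts a solution by weak-$*$ compactness --- which is precisely why the conclusion is only an a.e.-in-$t$ solution with $z_{tt}\in L^\infty$. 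Your scheme --- an inner fixed point $W[u,t]$ that solves out the fully nonlinear dependence, followed by an outer Picard iteration for $(u,v)$ --- rests on the same cancellation the paper uses for its jump-data component: the product of $\|\mathcal{N}\|\sim\epsilon^{q+4}\omega^2\|A_0\|_{H^s}$, of $g'\lesssim\epsilon^{-(q+4)}/\mathscr{C}$, and of the Lipschitz constant of $w\mapsto\lambda(w)$, which is $\sim\|w\|/(\omega^4\|A_0\|_{H^s}^2)$, is $O(1/\mathscr{C})$. If carried out it would buy a \emph{stronger} conclusion ($z_{tt}$ continuous in time and the equation satisfied for every $t$) with far less machinery, at the cost of committing to the implicit-function structure. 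The points you must still nail down are: (1) that the inner map preserves a ball on which $\lambda \leq \frac12(1+\lambda_0) < 1$, so that $g$ and $g'$ are defined there and $g'\leq\sqrt{2}\,g'(\lambda_0)$ --- this is what forces the admissible radius in $u$, and hence $t_0$, to depend on $1-\lambda_0$ exactly as the statement requires (or else work with the cutoff $g_*$ throughout, as the paper does); (2) that $|D|^p$ is a bounded operator on the range of $\mathcal{B}_k$, so the outer iteration loses no derivatives and the mode localization of conclusion (ii) is preserved; and (3) that the unique inner fixed point at $(u_0,0)$ coincides with the prescribed $w_0$, which is precisely the role of the compatibility condition \eqref{IdealW0Definition}.
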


\begin{remark}\label{LambdaSomewhatControlled}
Although we have only shown that $z(t)$ is such that the equation \eqref{zEquationOutline} is satisfied for almost every $t \in [0, \mathbf{t}]$, it is still true that the bound \eqref{LocalLambdaBound} gives control of $\lambda(t)$ \textit{everywhere} in $[0, \mathbf{t}]$.  This bound holds in particular at $t = \mathbf{t}$ by construction, and hence the fact that $z(t)$ is a solution only almost everywhere poses no trouble in iterating this local well-posedness result.  Moreover this gives sufficient control over the growth of $\lambda$ to construct an analogue of a bootstrap argument using the ODE \eqref{CartoonODE} for $\lambda$.
\end{remark}

The proof of Proposition \ref{zLocalWellPosed} will occupy the remainder of this section and entails several steps which we outline here.  In essence, we will construct a sequence of approximate solutions formed by discretizing the full nonlinearity in time, and extract a solution by compactness.  In particular, we emphasize that throughout the construction, the growth condition \eqref{HsGrowthAssn} and the rescaling \eqref{Rescaling} are used crucially to control the terms contributed by the full nonlinearity in a contraction mapping argument.  In more detail, we will perform the following steps:

\vspace{0.5cm}

\begin{itemize}
\item[(\S 4.1)]{Fixing a small interval $[0, 2t_0]$, partition into $n$ equal subintervals $I_0, I_1 \ldots, I_{n - 1}$.  On each, construct a solution $z^{\{n\}}$ to an approximate version of \eqref{zEquationOutline} on $I_j$ by replacing the dangerous full nonlinearity $g(\lambda(z_{tt}))$ by its average value and passing to a time-differentiated version of \eqref{zEquationOutline}, assuming that this construction has been carried out on the subintervals $I_0, \ldots, I_{j - 1}$.  Since we expect $z_{tt}^{\{n\}}$ to exhibit jumps across the endpoints of the subintervals, we must simultaneously construct initial data satisfying the correct compatibility conditions as well.  Closing these arguments requires an appropriate choice of $\mathscr{C}$ and $\omega$.  This step subsumes the proof of part (a).}
\item[(\S 4.2)]{By concatenating the solutions inductively constructed in \S 4.1, construct a sequence of approximate solutions on the whole interval $[0, 2t_0]$ where the full nonlinearity has been approximated by a step function subordinated to a partition of $[0, 2t_0]$ into $n$ subintervals.  This solution will be discontinuous and is not \textit{prima facie} uniformly bounded in $n$.  Use the almost-conservative nature\footnote{Note that this requires a defocusing choice of sign on the power nonlinearity of the progenitor equation \eqref{zEquationOutline} and hence for the progeny equation \eqref{NormalizedNLS} as well.} of the approximate equations to show that the sequence of approximate solutions are uniformly bounded pointwise on a common space-time domain $\mathbb{R}^2 \times [0, 2t_0)$.}
\item[(\S 4.3)]{Use compactness to extract a subsequence that converges pointwise everywhere on $\mathbb{R}^2 \times [0, 2t_0)$ to a function that satisfies the original equation for every $(x, y) \in \mathbb{R}^2$ and almost every $[0, 2t_0)$.  One can then choose $\mathbf{t}$ to be any time between $t_0$ and $2t_0$ at which the equation is satisfied.}
\end{itemize}

\subsection{Constructing the solution on subintervals of $[0, 2t_0]$.}

In this subsection we fix notation to be more specific about the discretization of the full nonlinearity in the penalization term.  Let $n$ be given, and set $h := 2t_0/n$ and $t_j = jh$ for $j = 1, 2, \ldots, n$.  Partition the whole interval $[0, 2t_0]$ into subinterval of equal length $[0, t_1], [t_1, t_2], \ldots, [t_{n - 1}, t_n] = I_0, I_1, \ldots, I_{n - 1}$.  Given a function $f(t)$ defined on $I_j$, denote its average on $I_j$ by
\begin{equation}\label{SubintervalAverage}
m_n^{(j)} f(t) := \fint_{I_j} f(\tau) \, d\tau
\end{equation}
For $f(t)$ defined on $[0, 2t_0]$, define the piecewise constant approximation
\begin{equation}\label{PiecewiseConstantAverage}
m_n f(t) := \sum_{j = 0}^{n - 1} \mathbf{1}_{I_j} m_n^{(j)} f(t)
\end{equation}

Our local well-posedness result will require differentiating our evolution equation in time.  Since we are only given initial data for $z, z_t$, we show in (a) that we can supply initial data for $z_{tt}$ that is compatible with the evolution equation.

\begin{proof}[Part (a) of Proposition \ref{zLocalWellPosed}.]
Recall $z(0) = \mathcal{B}_k \tilde{z}(0)$.  Consider the mapping
\begin{equation}
\mathbf{F}(W) = -|D|\Lambda^s_\epsilon z(0) - \epsilon^{3 - q}\mathcal{B}_k \Lambda^s_\epsilon (z(0)|z(0)|^{q - 1}) - \mathcal{N}(0) g(\lambda(|D|^s_\epsilon \Lambda^{-s}_\epsilon W))
\end{equation}
and the associated space
\begin{equation}
\mathfrak{X} = \{ W \in L^2 : W = \mathcal{B}_k W, \, \|W - \Lambda^s_\epsilon \tilde{z}_{tt}(0)\| \leq \epsilon^\frac12 \}
\end{equation}
equipped with the $L^2$-norm.  The space $\mathfrak{X}$ is nonempty since $\mathcal{B}_k \Lambda^s_\epsilon \tilde{z}_{tt}(0) \in \mathfrak{X}$ by Lemma \ref{WavePacketCutoff}.  To show that $\mathbf{F}$ maps $\mathfrak{X}$ into itself, note first that $\mathcal{B}_k \mathbf{F}(W) = \mathbf{F}(W)$ immediately from the definition.  Moreover, given a $W \in \mathfrak{X}$, we have for $\epsilon_0 > 0$ chosen sufficiently small depending on $s, p, q, k, \|A_0\|_{H^N}$ that 
\begin{align*}
\|\mathbf{F}(W) - \Lambda^s_\epsilon \tilde{z}_{tt}(0)\| & \leq \|\Lambda^s_\epsilon\left(\tilde{z}_{tt}(0) + |D|\tilde{z}(0)\right)\| + C \epsilon^{3 - q}\|z(0)\|^{q - 1}_{L^\infty}\|\Lambda^s_\epsilon z(0)\| \\
& \quad  + C \epsilon^7 g\left( \lambda \left(|D|^s_\epsilon \tilde{z}_{tt}(0) + (|D|^s_\epsilon \Lambda^{-s}_\epsilon W - |D|^s_\epsilon \tilde{z}_{tt}(0))\right)\right) \\
& \leq C \epsilon + C\epsilon^2 + C \epsilon^{7} g\left(\frac{1}{2} + \frac{1}{2\nu}\right) \\
& \leq \epsilon^\frac12
\end{align*}
Thus $\mathbf{F}(W) \in \mathfrak{X}$ as well.  Finally, let $W_1, W_2 \in \mathfrak{X}$ be given, and suppose without loss of generality that $\lambda(W_1) \leq \lambda(W_2)$.  Then we have using \eqref{ControlNorm} et. seq. that 
\begin{align*}
\|\mathbf{F}(W_1) - \mathbf{F}(W_2)\| & \leq C \epsilon^{q + 4} \left( g(\lambda(W_1)) - g(\lambda(W_2))\, \right) \\
& \leq C \epsilon^7 \int_0^1 g^\prime\left((1 - \theta)\lambda(W_2) + \theta\lambda(W_1)\right)\left(\lambda(W_1) - \lambda(W_2)\right) \, d\theta \\
& \leq C  \epsilon^7 g^\prime(\lambda(W_2)) (\|W_1\| + \|W_2\|)\|W_1 - W_2\| \\
& \leq C \epsilon^7 g^\prime\left(\frac12 + \frac{1}{2\nu}\right) \|W_1 - W_2\| \\
& \leq \frac12 \|W_1 - W_2\|
\end{align*}
where we have taken $\epsilon_0 > 0$ to be possibly smaller still depending on $s, p, q, k, \|A_0\|_{H^N}, \nu$ .  Thus a fixed point $W$ exists, and we take $\mathfrak{w}_0 = \Lambda^{-s}_\epsilon W$.
\end{proof}

\begin{remark}
Since $\lambda(0)$ is bounded $O(1)$ away from $1$ for $\epsilon_0 > 0$ chosen sufficiently small, the difficulties associated with the full nonlinearity are avoided here.  Notice that we only need (a) at time $t = 0$; at all later times at which we would like to use Proposition \ref{zLocalWellPosed} to continue solutions to \eqref{zEquationOutline}, we will have constructed a solution that satisfies the compatibility conditions at its right hand endpoint so that the solution can be continued using (b) alone.
\end{remark}

Our goal is now to inductively construct our solution on each subinterval $I_j$ under the assumption that a similar solution has already been constructed on $I_{j - 1}$.  The proof proceeds by a standard contraction mapping argument, but with a twist:  since we approximate the full nonlinearity by a typically discontinuous function, we expect $z_{tt}$ to exhibit jump discontinuities across the endpoints of the subintervals $I_0, I_1, \ldots, I_{n - 1}$.  Thus as part of the contraction mapping argument we must construct initial data for $z_{tt}$ that satisfies the jump condition forced by the discontinuous nonlinearity.

Since it will be convenient at this stage to allow values of $\lambda \geq 1$ in the argument of $g$, we modify $g$ as follows: let $\lambda_* < 1$ be a quantity to be determined.  Then define
\begin{equation}\label{DefnGStar}
g_*^\prime(\lambda) = \begin{cases} g^\prime(\lambda) & \lambda \leq \lambda_* \\ 0 & \lambda > \lambda_* \end{cases} \qquad g_*(\lambda) = \int_0^\lambda g_*^\prime(\lambda^\prime) \, d\lambda^\prime
\end{equation}
Observe that with this definition, $g_*^\prime(\lambda) \leq \frac12(1 - \lambda_*)^{-\frac12}$ and $g_*(\lambda) \leq 1 - (1 - \lambda_*)^\frac12 < 1$.\footnote{We are justified in cutting off $g$ in this way since we will eventually show using the ODE argument alluded to in \eqref{CartoonODE} that for the solutions we construct, $\lambda(t)$ is bounded far enough away from 1 for all times so that $g_\star(\lambda(t)) = g(\lambda(t))$, provided $\lambda_*$ is chosen sufficiently close to 1.  This will be shown rigorously in \S 5.}  We proceed to give the ``induction step'' construction of solutions on the subintervals $I_j$:

\begin{lemma}\label{NanoLocalWellposedness}
(Construction of approximate solution on $I_j$ given a solution on $I_{j - 1}$.)  Let $n \in \mathbb{N}$ and $j = 1, 2, \ldots, n - 1$ be given.  Let $M \leq 2$ be given, let $\lambda_*$ satisfy
\begin{equation}
\epsilon^{q + 4} g^\prime(\lambda_*) = \frac{32}{\mathscr{C}}
\end{equation}
and define $g_\star$ as in \eqref{DefnGStar} above.  

Suppose that we are given initial data $(u_j, v_j, w_j)$ satisfying the properties
\begin{itemize}
\item[(i)]{$u_j = \mathcal{B}_k u_j$, $v_j = \mathcal{B}_k v_j$.}
\item[(ii)]{$\max(\|u_j\|, \omega^{-1}\|v_j\|) \leq M\|A_0\|_{H^s}$,}
\item[(iii)]{$\|\Lambda^{-s}_\epsilon u_j\|_{L^\infty} \leq 2\epsilon \|A_0\|_{L^\infty}$.}
\end{itemize}

Suppose further that we are given a function\footnote{Since we will also be denoting $\Lambda^s_\epsilon z_{tt}$ on $I_j$ by $w$ in the contraction mapping argument, this is an abuse of notation.  One should think of $w|_{I_{j - 1}}$ as having been already constructed with $w_j^-$ being the left-hand limit of the jump discontinuity at $t = t_j$.} $w \in C(I_{j - 1} : L^2)$ satisfying $w = \mathcal{B}_k w$ and $\|w\|_{C(I_{j - 1} : L^2)} \leq M\omega^2\|A_0\|_{H^s}$, and set $w_j^- := \lim_{t \to t_j^-} w(t)$.

Define 
$$
\lambda(t) = 
\begin{cases} 
\lambda(w(t)) & t \in I_{j - 1} \\ 
\lambda(\Lambda^s_\epsilon z_{tt}(t)) & t \in I_{j} 
\end{cases}
$$
Let $\mathscr{T}_1 < \mathscr{T}$ be given.  Then for $t_0 > 0$ chosen sufficiently small depending on $s, p, q, \epsilon, \|A_0\|_{H^s}$, there exists initial data $w_j^+$ satisfying the bound 
$$\|w_j^+\| \leq (M + 1)\|A_0\|_{H^s}$$
and a solution 
$$(z, z_t, z_{tt}) \in C^2(I_j : L^2) \times C^1(I_j : L^2) \times C^0(I_j : L^2)$$
to the initial value problem
\begin{equation}\label{ApproxEqnOnSubinterval}
\begin{cases}
z_{tt} + |D|^p z + \epsilon^2\mathcal{B}_k(z|z/\epsilon|^{q - 1}) +  \mathcal{N} g_\star(m_n^{(j)}\lambda(t)) = 0 \qquad (x, t) \in \mathbb{R}^2 \times I_{j} \\
\mathcal{N} := \mathcal{B}_k \left(\epsilon^{q + 4}z + 2i\omega^2(\epsilon^7 t + \epsilon^5(T_1 + \mathscr{T}_1))z|z|^{q - 1}\right) \\
z(t_j) = \Lambda^{-s}_\epsilon u_j \\
z_t(t_j) = \Lambda^{-s}_\epsilon v_j \\
\lim_{t \to t_j^+} z_{tt}(t) = \Lambda^{-s}_\epsilon w_j^+
\end{cases}
\end{equation}
which satisfies
$$\max_{t \in I_{j + 1}} \left( \|\Lambda^s_\epsilon z(t)\|, \omega^{-1} \|\Lambda^s_\epsilon z_t(t)\| \right) \leq \left(M + C\frac{t_0}{n}\right)\|A_0\|_{{H}^s}$$
and
$$\|\Lambda^s_\epsilon z_{tt}(t)\| \leq \omega^2\left(M + 1 + C \frac{t_0}{n}\right)\|A_0\|_{H^s}$$
for a constant $C$ depending on $s, k, p, q, \epsilon, \|A_0\|_{H^s}$
\end{lemma}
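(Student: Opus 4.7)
The plan is to construct the solution on $I_j$ and the jump datum $w_j^+$ via a nested fixed-point argument that exploits a key structural simplification: because $m_n^{(j)}\lambda(t)$ is the average of $\lambda$ over $I_j$, it is a \emph{single scalar} once the solution on $I_j$ is known. Freezing this scalar at some $\bar\lambda \in [0,\lambda_*]$ reduces \eqref{ApproxEqnOnSubinterval} to the semilinear equation
$$z_{tt} + |D|^p z + \epsilon^2 \mathcal{B}_k(z|z/\epsilon|^{q-1}) + \mathcal{N}(z,t)\,g_\star(\bar\lambda) = 0,$$
with no fully-nonlinear dependence on $z_{tt}$ remaining. Since $|D|^p$ acts as a bounded operator (of norm at most $(3k/2)^p$) on functions with Fourier support in the ball defining $\mathcal{B}_k$, this equation admits standard Picard iteration, with Lemmas \ref{RescaledMoserInequality} and \ref{L1HatEstimate} providing control of the power nonlinearities in $\Lambda^s_\epsilon L^2$.

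For the inner step, with $\bar\lambda$ frozen, the initial value $w_j^+(\bar\lambda)$ is given \emph{explicitly} by the compatibility condition extracted from the PDE evaluated at $t = t_j^+$; its $L^2$ bound is inherited directly from those on $u_j$, using $g_\star \leq 1$ and Lemma \ref{RescaledMoserInequality}. Then I would run Picard iteration in the weighted norm
$$\|\Lambda^s_\epsilon z\|_{C(I_j;L^2)} + \omega^{-1}\|\Lambda^s_\epsilon z_t\|_{C(I_j;L^2)} + \omega^{-2}\|\Lambda^s_\epsilon z_{tt}\|_{C(I_j;L^2)},$$
writing $z(t) = \Lambda^{-s}_\epsilon u_j + (t - t_j)\Lambda^{-s}_\epsilon v_j + \int_{t_j}^t(t-\tau)z_{tt}(\tau)\,d\tau$ and using the PDE to update $z_{tt}$. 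Since $h = 2t_0/n$ may be taken as small as needed (depending on $\epsilon$ but not on $n$), the contraction closes and delivers the stated increments $M + Ct_0/n$ and $M + 1 + Ct_0/n$.

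The outer step is a fixed point on $\Phi(\bar\lambda) := m_n^{(j)}\lambda(\Lambda^s_\epsilon z_{tt}(\,\cdot\,;\bar\lambda))$, restricted to $[0,\lambda_*]$. This interval is mapped into itself once the uniform $\|\Lambda^s_\epsilon z_{tt}\|$ bounds from the inner step are in place. The essential contraction estimate comes from subtracting the Duhamel expressions for two inner solutions with parameters $\bar\lambda, \bar\lambda'$: the driving difference is proportional to $\Lambda^s_\epsilon \mathcal{N} \cdot (g_\star(\bar\lambda) - g_\star(\bar\lambda'))$. Combining $\|\Lambda^s_\epsilon\mathcal{N}\| \lesssim \epsilon^{q+4}(1 + O(\omega^2))\|A_0\|_{H^s}$ with $|g_\star(\bar\lambda) - g_\star(\bar\lambda')| \leq g_\star'(\lambda_*)|\bar\lambda - \bar\lambda'|$ and the calibration $\epsilon^{q+4}g'(\lambda_*) = 32/\mathscr{C}$ yields a contraction factor of order $1/\mathscr{C}$, strictly less than $1$ once $\mathscr{C}$ is chosen sufficiently large. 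Banach's theorem then supplies $\bar\lambda$, and with it the solution $z$ and its jump datum $w_j^+$.

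The main obstacle is precisely this calibration: the factor $\epsilon^{q+4}$ in $\mathcal{N}$ must exactly cancel the $\epsilon^{-(q+4)}$ blow-up of $g_\star'(\lambda_*)$, leaving $1/\mathscr{C}$ as the only small parameter. This cancellation is no accident — it is the very reason why the penalization coefficient $\mathcal{N}$ in \eqref{zEquationOutline} was designed with this precise scaling. The rest of the proof — continuity of $z_{tt}$ on $I_j$ from the integrated form of the PDE, and the exact growth constants $M + Ct_0/n$ — reduces to routine Gr\"onwall bookkeeping on Picard iterates over the short interval of length $h = 2t_0/n$.
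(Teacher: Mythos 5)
Your architecture is genuinely different from the paper's, and the difference is worth noting. The paper runs a \emph{single} contraction on the triple $(\Lambda^s_\epsilon z_t, \Lambda^s_\epsilon z_{tt}, w_j^+)$, applied to the \emph{time-differentiated} equation in Duhamel form together with the jump relation for $w_j^+$, with an extra weight of $4$ on the third component so that the non-perturbative $O(1/\mathscr{C})$ estimate on the jump map closes. You instead exploit the observation that $m_n^{(j)}\lambda$ is a single scalar and nest two fixed points: an inner Picard iteration for the semilinear problem with $g_\star(\bar\lambda)$ frozen, and an outer one-dimensional contraction for $\bar\lambda$. Both arguments live or die on the same cancellation $\epsilon^{q+4}\,g'(\lambda_*) = 32/\mathscr{C}$ against $\|\Lambda^s_\epsilon\mathcal{N}\| \lesssim \epsilon^{q+4}\omega^2\|A_0\|_{H^s}$ (with $\omega$ chosen small to tame the $\mathbf{T}$-dependent piece of $\mathcal{N}$), and you have correctly identified this as the heart of the matter. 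Your version isolates the fully nonlinear dependence in a scalar equation, which is conceptually cleaner; the paper's version handles the coupling in one pass and is what lets it track explicit numerical constants ($\mathscr{C} = 10^5$). Two points, however, need repair.

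First, your claim that $\Phi(\bar\lambda) := m_n^{(j)}\lambda(\Lambda^s_\epsilon z_{tt}(\cdot;\bar\lambda))$ maps $[0,\lambda_*]$ into itself is false: the uniform bound from the inner step is $\|\Lambda^s_\epsilon z_{tt}\| \leq \omega^2(M+1+Ct_0/n)\|A_0\|_{H^s}$, so $\Phi(\bar\lambda)$ can be as large as $(M+1+Ct_0/n)^2/\nu^2$, which for $M = 2$ is on the order of $9$, far above $\lambda_* < 1$. This is harmless but must be fixed by running the outer contraction on all of $[0,\infty)$ rather than on $[0,\lambda_*]$: since $g_\star'$ vanishes above $\lambda_*$, $g_\star$ is \emph{globally} Lipschitz with constant $g'(\lambda_*) = 32\epsilon^{-(q+4)}/\mathscr{C}$, so the contraction estimate you wrote is valid on the whole half-line and Banach's theorem applies there.

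Second, and more seriously, your bound on $w_j^+$ does not come out as stated. You propose to read $w_j^+$ off the equation at $t = t_j^+$ and bound it ``directly from $u_j$.'' The dominant term is then $\||D|^p u_j\|$, which on the support of $\hat{\mathcal{B}}_k$ is only controlled by $(3k/2)^p\|u_j\| = (3/2)^p\,\omega^2 M\|A_0\|_{H^s}$; for $p = 3$ and $M = 2$ this is $6.75\,\omega^2\|A_0\|_{H^s}$, which exceeds the required $\omega^2(M+1)\|A_0\|_{H^s}$. The constant $(M+1)$ is not cosmetic: it is what keeps $M \leq 2$ admissible uniformly in $j$ in the subsequent induction and keeps $\mathscr{C}$ universal rather than $k$- and $p$-dependent. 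The correct route is the one the paper takes: subtract the equation satisfied by $w_j^-$ at $t_j^-$ (with coefficient $g_\star(m_n^{(j-1)}\lambda)$) from the compatibility condition at $t_j^+$, obtaining
\begin{equation*}
w_j^+ = w_j^- - \Lambda^s_\epsilon\mathcal{N}_j\bigl(g_\star(\bar\lambda) - g_\star(m_n^{(j-1)}\lambda)\bigr),
\end{equation*}
so that the large term $\||D|^p u_j\|$ cancels and $\|w_j^+\| \leq \|w_j^-\| + O(\omega^2\|A_0\|_{H^s}/\mathscr{C}) \leq \omega^2(M+1)\|A_0\|_{H^s}$ by the hypothesis on $w|_{I_{j-1}}$ and the same $\epsilon^{q+4}\cdot\epsilon^{-(q+4)}$ cancellation. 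With these two adjustments your nested scheme goes through.
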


\begin{proof}
We begin by applying the operator $\Lambda^{s}_\epsilon$ and formally differentiating \eqref{ApproxEqnOnSubinterval} in time.  Introduce $\mathcal{W}(t) = (v(t), w(t))^T = (\Lambda^s_\epsilon z_t(t), \Lambda^s_\epsilon z_{tt}(t))^T$ as well as $u(t) = u_j + \int_{t_j}^t v(\tau) \, d\tau$.  Writing the resulting equation in a DuHamel formulation motivates studying the map
\begin{equation}\label{JustDuhamel}
\mathcal{W} \mapsto e^{\mathscr{L} (t - t_j)}\begin{pmatrix} v_j \\ w_j^+ \end{pmatrix} + \int_{t_j}^t e^{\mathscr{L}(t - \tau)} \begin{pmatrix} 0 \\ \mathscr{N}(\tau) \end{pmatrix} \, d\tau
\end{equation}
where we introduce the semigroup
\begin{equation}\label{LSemigroup}
e^{\mathscr{L}t} = \begin{pmatrix} \cos(|D|^\frac{p}{2}t) & \frac{\sin(|D|^\frac{p}{2} t)}{|D|^\frac{p}{2}} \\ -|D|^\frac{p}{2} \sin(|D|^\frac{p}{2} t) & \cos(|D|^\frac{p}{2}t) \end{pmatrix}
\end{equation}
associated to the operator 
\begin{equation}
\mathscr{L} = \begin{pmatrix} 0 & 1 \\ -|D|^p & 0 \end{pmatrix}
\end{equation}
as well as the nonlinearity
\begin{align}\label{DifferentiatedNonlinearity}
\mathscr{N}(t) & = \mathcal{B}_k \epsilon^{3 - q} \Lambda^s_\epsilon \Bigl(C_q \Lambda^{-s}_\epsilon v |\Lambda^{-s}_\epsilon u|^{q - 1} + C_q (\Lambda^{-s}_\epsilon u)^2 \Lambda^{-s}_\epsilon \overline{v} |\Lambda^{-s}_\epsilon u|^{q - 3} \Bigr) \notag \\
& \qquad + \mathcal{B}_k \Bigl( \epsilon^{q +4} v + 2i\omega^2\epsilon^7 \Lambda^{-s}_\epsilon u |\Lambda^{-s}_\epsilon u|^{q - 1})g_\star(m^{(j)}\lambda(t)) \notag \\
& \qquad + \mathcal{B}_k \Lambda^s_\epsilon \Bigl(C_q \Lambda^{-s}_\epsilon v |\Lambda^{-s}_\epsilon u|^{q - 1} + C_q (\Lambda^{-s}_\epsilon u)^2 \Lambda^{-s}_\epsilon \overline{v} |\Lambda^{-s}_\epsilon u|^{q - 3} \Bigr) \\
& \hspace{4cm} \times 2i\omega^2(\epsilon^7t + \epsilon^5(T_1 + \mathscr{T}_1))g_\star(m^{(j)}\lambda(t)) \notag
\end{align}
where in the above different instances of the constants $C_q$, $C_\omega$ may differ in value.  Next, formally subtracting the equations \eqref{ApproxEqnOnSubinterval} defined on $I_{j - 1}$ and $I_{j}$ and evaluating the jump discontinuity $w_j^+ - w_j^- = \lim_{t \to t_j^+} z_{tt}(t) - \lim_{t \to t_j^-} z_{tt}(t)$ suggests studying the following mapping

\begin{equation}\label{JustJump}
w_j^+ \mapsto w_j^- - \mathcal{B}_k\left(\epsilon^{q + 4}\omega^2 u_j + 2i\omega^2\epsilon^5(T_1 + \mathscr{T}_1)\Lambda^s_\epsilon(\Lambda^{-s}_\epsilon u_j |\Lambda^{-s}_\epsilon u_j|^{q - 1}|)\right)\left(g_\star(m^{(j)}_n \lambda) - g_\star(m^{(j - 1)}_n \lambda)\right)
\end{equation}

Since neither \eqref{JustDuhamel} nor \eqref{JustJump} taken alone would produce a closed contraction mapping, we combine them into one mapping $\mathbf{F}$ on the space $C(I_{j}: L^2) \times C(I_{j}: L^2) \times L^2$.  Denote $\mathcal{W} = (v(t), w(t), w_j^+)^T$ with $u(t) := u_j + \int_{t_j}^t v(\tau) \, d\tau$.  Then define

\begin{equation}\label{ContractionMapping}
\mathbf{F}(\mathcal{W})(t) := 
\begin{pmatrix} 
e^{\mathscr{L} (t - t_j)}\begin{pmatrix} v_j \\ w_j^+ \end{pmatrix} + \int_{t_j}^t e^{\mathscr{L}(t - \tau)} \begin{pmatrix} 0 \\ \mathscr{N}(\tau) \end{pmatrix} \, d\tau \\ 
w_j^- - \mathcal{B}_k\Bigl(\epsilon^{q + 4}\omega^2u_j + 2i\omega^2\epsilon^5(T_1 + \mathscr{T}_1) \\
\times \Lambda^s_\epsilon(\Lambda^{-s}_\epsilon u_j |\Lambda^{-s}_\epsilon u_j|^{q - 1})\Bigr)\left(g_\star(m^{(j)}_n \lambda) - g_\star(m^{(j - 1)}_n \lambda)\right)
\end{pmatrix}
\end{equation}

We proceed by showing that $\mathbf{F}$ is a contraction mapping on the space
\begin{align}
\mathfrak{X} = \Bigl\{\mathcal{W} \in C(I_{j}: L^2) \times C(I_{j}: L^2) \times L^2 : &\;v(t_j) = v_j \notag \\
& \lim_{t \to t_j^+} w(t) = w_j^+ \notag \\
& \mathcal{W} = \mathcal{B}_k\mathcal{W} \\
& \|w_j^+\|_{L^2} \leq \omega^2(M + 1)\|A_0\|_{H^s} \notag \\
& \|w\|_{C(I_{j}: L^2)} \leq \omega^2\left(M + 1 + C\frac{t_0}{n}\right)\|A_0\|_{H^s} \notag \\
& \|v\|_{C(I_{j}: L^2)} \leq \omega \left(M + C\frac{t_0}{n}\right)\|A_0\|_{H^s} \Bigr\} \notag
\end{align}
equipped with the norm\footnote{The extra factor of $4$ in the definition of $\|\cdot\|_\mathfrak{X}$ is used to ensure that $\mathbf{F}$ is contractive; see the estimate of $\|\mathbf{F}(\mathcal{W}_1)_{2} - \mathbf{F}(\mathcal{W}_2)_{2}\|$ below.}
\begin{equation}
\|(f(t), g(t), h)^T\|_\mathfrak{X} := \|f\|_{C(I_{j} : L^2)} + \|g\|_{C(I_{j} : L^2)} + 4\|h\|_{L^2}
\end{equation}

First we show that $\mathbf{F}(\mathcal{W}) \in \mathfrak{X}$ whenever $\mathcal{W} \in \mathfrak{X}$.  Let $\mathcal{W} \in \mathfrak{X}$ be given, and denote by $\mathbf{F}(\mathcal{W})_l$ the $l$th component of $\mathbf{F}(\mathcal{W})$.  Then 
$$\mathbf{F}(\mathcal{W})_1(t_j) = v_j, \qquad \lim_{t \to t_j^+} \mathbf{F}(\mathcal{W})_2(t) = w_j^+, \qquad \mathbf{F}(\mathcal{W}) = \mathcal{B}_k \mathbf{F}(\mathcal{W})$$ all follow immediately from the definition of $\mathbf{F}$.  It is helpful to observe that, when $\mathcal{W} \in \mathfrak{X}$, we have by definition of $u(t)$ the estimates
\begin{equation}
\|u(t)\| \leq \|u_j\| + \left\| \int_{t_j}^t v(\tau) \, d\tau \right\| \leq M\|A_0\|_{H^s} + C(p, q, k, \|A_0\|_{H^s})h
\end{equation}
as well as
\begin{equation}\label{uLInfty}
\|\Lambda^{-s}_\epsilon u(t)\|_{L^\infty} \leq \|u_j\|_{L^\infty} + \int_{t_j}^t \|v(\tau)\|_{L^\infty} \, d\tau \leq 2\epsilon \|A_0\|_{H^s} + C(p, q, k, \|A_0\|_{H^s})h
\end{equation}

\vspace{.5cm}

\textbf{Estimate of $\|F(\mathcal{W})_3\|$.}  We cannot close this estimate by the usual perturbative method of restricting to very small time intervals; we must use $\mathscr{C}$ and $\omega$.  Set $$\mathcal{N}_j := - \mathcal{B}_k\left(\epsilon^{q + 4}\omega^2 u_j + 2i\omega^2\epsilon^5(T_1 + \mathscr{T}_1)\Lambda^s_\epsilon(\Lambda^{-s}_\epsilon u_j |\Lambda^{-s}_\epsilon u_j|^{q - 1}|)\right).$$ We have using Lemma \ref{RescaledMoserInequality} and the Fundamental Theorem of Calculus that
\begin{align*}
\|F(\mathcal{W})_3\| & \leq \|w_j^-\| + \|\mathcal{N}_j\| \left| \int_0^1 g_\star^\prime \left(\theta m_n^{(j)}\lambda + (1 - \theta) m_n^{(j - 1)} \lambda \right) d\theta \right| \, |m_n^{(j)} \lambda - m_n^{(j - 1)} \lambda| \\
& \leq \omega^2 M\|A_0\|_{H^s} + \|\mathcal{N}_j\| \frac{32\epsilon^{-(q + 4)}}{\mathscr{C}} |m_n^{(j)} \lambda - m_n^{(j - 1)} \lambda| \\
& \leq \omega^2 M\|A_0\|_{H^s} + \|\mathcal{N}_j\| \frac{32\epsilon^{-(q + 4)}}{\mathscr{C}} \left(\frac{(M + 2)^2\|A_0\|_{H^s}^2}{\nu\|A_0\|_{H^s}^2} + \frac{M^2\|A_0\|_{H^s}^2}{\nu\|A_0\|_{H^s}^2}\right) \\
& \leq \omega^2 M\|A_0\|_{H^s} + \|\mathcal{N}_j\| \frac{1600\epsilon^{-(q + 4)}}{\mathscr{C}}
\end{align*}
We can also estimate the term $\|\mathcal{N}_j\|$ using Lemma \ref{RescaledMoserInequality}, the rescaling \eqref{Rescaling} as in Remark \ref{WhyRescaleWhyThisBound}, and bound (iii) along with \eqref{uLInfty} as follows:
\begin{align}\label{N1Bound}
\|\mathcal{N}_j\| & = \| \epsilon^{q + 4}\omega^2 u_j + 2i\omega^2\epsilon^5(T_1 + \mathscr{T}_1)\Lambda^s_\epsilon(\Lambda^{-s}_\epsilon u_j |\Lambda^{-s}_\epsilon u_j|^{q - 1}|) \| \notag \\
& \leq \epsilon^{q + 4}\omega^2 M\|A_0\|_{H^s} + 2C_{(s, q)}3^q\epsilon^{q + 4}\omega^2 \left(2 \omega^{\frac{4}{p} - 1} \mathbf{T}\right)\|\mathfrak{u}\|_{C([0, \mathbf{T}] : L^\infty)}^{q - 1} M\|A_0\|_{H^s}
\end{align}
where $C_{(s, q)}$ depends on the sharp constants in the estimate of Lemma \ref{RescaledMoserInequality} and in the Sobolev embedding $H^2(\mathbb{R}^2) \hookrightarrow L^\infty(\mathbb{R}^2)$.  Combining these estimates now gives
\begin{align*}
\|F(\mathcal{W})_3\| & \leq \omega^2 M\|A_0\|_{H^s} + \left(3 + C_{s, q} \left(2 \omega^{\frac{4}{p} - 1} \mathbf{T}\right)\|\mathfrak{u}\|_{C([0, \mathbf{T}] : L^\infty)}^{q - 1} \right)\frac{1600}{\mathscr{C}}\omega^2\|A_0\|_{H^s} \\
& \leq \omega^2 M\|A_0\|_{H^s} + \frac{6400}{\mathscr{C}}\omega^2\|A_0\|_{H^s} \\
& \leq \omega^2(M + 1)\|A_0\|_{H^s}
\end{align*}
where in the second step we have chosen $\omega$ small enough so that 
$C_{s, q} \omega^{\frac{4}{p} - 1} \mathbf{T}\|\mathfrak{u}\|_{C([0, \mathbf{T}] : L^\infty)}^{q - 1} = 1$
and in the third step that $\mathscr{C} = 10^5$.

\vspace{.5cm}

\textbf{Estimate of $\|F(\mathcal{W})_2\|_{C(I_{j + 1} : L^2)}$.}   This estimate is purely perturbative thanks to the DuHamel  integral and the symbol expansions of the multipliers $\cos(|D|^{\frac{p}{2}})$ and $|D|^{-\frac{p}{2}}\sin(|D|^{\frac{p}{2}})$ about the wave number $\xi = k$.  Since we have $|g_*(\lambda)| \leq 1$ for any $\lambda \geq 0$, we estimate using Lemma \ref{RescaledMoserInequality} that
\begin{align*}
\|F(\mathcal{W})_2\|_{C(I_{j + 1} : L^2)^2} & \leq \|w_j^+\| + C_k h (\|v_j\| + \|w_j^+\|) + h\|\mathscr{N}\|_{C(I_{j + 1} : L^2)} \\
& \leq (M + 1)\|A_0\|_{H^s} + C_k h (\|v_j\| + \|w_j^+\|) + h\|\mathscr{N}\|_{C(I_{j + 1} : L^2)} \\
& \leq \left(M + 1 + C_{(s, q, p, k, \|A_0\|_{H^s})}h\right)\|A_0\|_{H^s}
\end{align*}

\textbf{Estimate of $\|F(\mathcal{W})_1\|_{C(I_{j + 1} : L^2)}$.}  This estimate parallels the estimate of $\|F(\mathcal{W})_2\|_{C(I_{j + 1} : L^2)}$, except that it is easier due to an extra factor of $h$ in the DuHamel integral thanks to the symbol expansions of $\cos(|D|^{\frac{p}{2}})$ and $|D|^{\frac{p}{2}}\sin(|D|^{\frac{p}{2}})$; we omit estimating in detail.

\vspace{.5cm}

We now show that $\mathbf{F}$ is a contraction mapping.  Let $\mathcal{W}_1, \mathcal{W}_2 \in \mathfrak{X}$ be given.  We split the estimates by component.

\vspace{.5cm}

\textbf{Estimate of $\|\mathbf{F}(\mathcal{W}_1)_3 - \mathbf{F}(\mathcal{W}_2)_3\|$.}  We begin again with the non-perturbative estimate; the steps here parallel those of the estimate of $\|F(\mathcal{W})_3\|$ above.  With $\mathcal{N}_j$ as above, we have
\begin{align*}
\mathbf{F}(\mathcal{W}_1)_3 - \mathbf{F}(\mathcal{W}_2)_3 & = \mathcal{N}_j \left(g_*(m^{(j)}_n \lambda_1) - g_\star(m^{(j)}_n \lambda_2)\right)
\end{align*}
For brevity, set $\lambda_j := \lambda(w_j)$.  Using the definition of $g_\star$ yields the bound
\begin{align*}
& \quad \|\mathbf{F}(\mathcal{W}_1)_3 - \mathbf{F}(\mathcal{W}_2)_3\| \\
& \leq \|\mathcal{N}_j\| \left| \int_0^1 g_*^\prime \left(m^{(j)}_n (\theta \lambda_1 + (1 - \theta) \lambda_2 )\right) \, d\theta \right| \cdot |m^{(j)}_n(\lambda_1 - \lambda_2)| \\
& \leq \|\mathcal{N}_j\| \frac{32\epsilon^{-(q + 4)}}{\mathscr{C}} \frac{(\|w_1\| + \|w_2\|)\|w_1 - w_2\|}{\nu^2\omega^4 \|A_0\|_{H^s}^2} \\
& \leq \|\mathcal{N}_j\| \frac{32\epsilon^{-(q + 4)}}{\mathscr{C}} \frac{8\|A_0\|_{H^s}\|w_1 - w_2\|}{\omega^2\|A_0\|_{H^s}^2} \\
& \leq \|\mathcal{N}_j\|\frac{1024\epsilon^{-(q + 4)}}{\mathscr{C}} \frac{\|w_1 - w_2\|}{\omega^2\|A_0\|_{H^s}}
\end{align*}
With the bound \eqref{N1Bound} of $\|\mathcal{N}_j\|$ and the same choices of $\omega$ and $\mathscr{C}$ as above, we find that
\begin{align*}
& \quad \|\mathbf{F}(\mathcal{W}_1)_3 - \mathbf{F}(\mathcal{W}_2)_3\| \\
& \leq \left(\omega^2 M\|A_0\|_{H^s} + 6C_{s, q}\omega^2 \left(2 \omega^{\frac{4}{p} - 1} \mathbf{T}\right)\|\mathfrak{u}\|_{C([0, \mathbf{T}] : L^\infty)}^{q - 1} \|A_0\|_{H^s}\right)\frac{1024}{\mathscr{C}} \frac{\|w_1 - w_2\|}{\omega^2\|A_0\|_{H^s}} \\
& \leq \left(3 + C_{s, q}\left( \omega^{\frac{4}{p} - 1} \mathbf{T}\right)\|\mathfrak{u}\|_{C([0, \mathbf{T}] : L^\infty)}^{q - 1} \right)\frac{1024}{\mathscr{C}} \|w_1 - w_2\| \\
& \leq \frac{4096}{\mathscr{C}} \|w_1 - w_2\| \\
& \leq \frac{1}{16} \|\mathcal{W}_1 - \mathcal{W}_2\|_\mathfrak{X}
\end{align*}

\vspace{.5cm}

\textbf{Estimate of $\|\mathbf{F}(\mathcal{W}_1)_{2} - \mathbf{F}(\mathcal{W}_2)_{2}\|$.}  This estimate is perturbative, but also crucially uses the extra factor of 4 in the definition of $\|\cdot\|_\mathfrak{X}$.  We first record that
\begin{align*}
\mathbf{F}(\mathcal{W}_1)_{1, 2} - \mathbf{F}(\mathcal{W}_2)_{1, 2} & = e^{\mathscr{L} (t - t_j)}\begin{pmatrix} 0 \\ (w_1)_j^+ - (w_2)_j^+ \end{pmatrix} + \int_{t_j}^t e^{\mathscr{L}(t - \tau)} \begin{pmatrix} 0 \\ \mathscr{N}_1(\tau) - \mathscr{N}_2(\tau) \end{pmatrix} \, d\tau
\end{align*}
We then estimate that
\begin{align*}
& \quad \|\mathbf{F}(\mathcal{W}_1)_{2} - \mathbf{F}(\mathcal{W}_2)_{2}\|_{C(I_{j + 1} : L^2)} \\
& \leq \|(w_1)_j^+ - (w_2)_j^+\| + C_k h(\|(w_1)_j^+\| + \|(w_2)_j^+\| + \|v_j\|) + \left\|\mathscr{N}_1(\tau) - \mathscr{N}_2(\tau) \right\| h \\
& \leq \frac{1}{4}\|\mathcal{W}_1 - \mathcal{W}_2\|_\mathfrak{X} + C_k h(\|(w_1)_j^+\| + \|(w_2)_j^+\| + \|v_j\|)+ \left\|\mathscr{N}_1(\tau) - \mathscr{N}_2(\tau) \right\| h
\end{align*}
Now expand $\mathscr{N}_1 - \mathscr{N}_2$ into terms having differences of the form either $u_1 - u_2$, $v_1 - v_2$ or $g_*(m^{(j)}_n \lambda_1) - g_*(m^{(j)}_n \lambda_2)$.  In terms where the differences are of the first type, we may estimate $|g_*(m^{(j)}_n \lambda_l)| \leq 1$ for $l = 1, 2$ and bound these terms by 
\begin{align*}
& \quad\; C_{(s, q, k, \epsilon)}h\|u_1 - u_2\|_{C(I_{j} : L^2)} \\
& \leq C_{(s, q, k, \epsilon)}h\left\| \int_{t_j}^{t_{j + 1}} v_1(\tau) - v_2(\tau) \, d\tau\right\|_{C(I_{j} : L^2)} \\
& \leq C_{(s, q, k, \epsilon)}h^2\|\mathcal{W}_1 - \mathcal{W}_2\|_\mathfrak{X}
\end{align*}
For the second type of term the estimate is similar, but with one less factor of $h$.  In the final case, the term in question is bounded by
\begin{align*}
& \quad C_{(s, q, k, \epsilon)}h|m^{(j)}_n(\lambda_1 - \lambda_2)| \\
& \leq C_{(s, q, k, \epsilon)}h\frac{(\|w_1\| + \|w_2\|)(\|w_1 - w_2\|)}{\nu^2 \omega^4 \|A_0\|_{H^s}^2} \\
& \leq C_{(s, q, k, \epsilon)}h\frac{8\|A_0\|_{H^s}(\|w_1 - w_2\|)}{ \|A_0\|_{H^s}^2} \\
& \leq C_{(s, q, k, \epsilon, \|A_0\|_{H^s})}h\|w_1 - w_2\| \\
& \leq C_{(s, q, k, \epsilon, \|A_0\|_{H^s})}h\|\mathcal{W}_1 - \mathcal{W}_2\|_\mathfrak{X} 
\end{align*}
Summing over all such terms then gives the bound
\begin{equation}
\|\mathbf{F}(\mathcal{W}_1)_{2} - \mathbf{F}(\mathcal{W}_2)_{2}\| \leq \left(\frac{1}{4} + C_{(s, q, k, \epsilon, \|A_0\|_{H^s})}h\right)\|\mathcal{W}_1 - \mathcal{W}_2\|_\mathfrak{X}
\end{equation}

\vspace{0.5cm}

\textbf{Estimate of $\|\mathbf{F}(\mathcal{W}_1)_{1} - \mathbf{F}(\mathcal{W}_2)_{1}\|$.}  This estimate proceeds just as in the estimate of $\|\mathbf{F}(\mathcal{W}_1)_{2} - \mathbf{F}(\mathcal{W}_2)_{2}\|$, except that we have an extra factor of $h$, and the initial data of the difference exactly cancels in this component.  The estimate, very similar to the previous one whose details we therefore omit, concludes
\begin{equation}
\|\mathbf{F}(\mathcal{W}_1)_{1} - \mathbf{F}(\mathcal{W}_2)_{1}\| \leq C_{(s, q, k, \epsilon, \|A_0\|_{H^s})}h\|\mathcal{W}_1 - \mathcal{W}_2\|_\mathfrak{X}
\end{equation}

\vspace{0.5cm}

\textbf{The Conclusive Contraction Estimate.}  Summing the previous estimates and choosing $t_0 > 0$ sufficiently small depending on $s, q, p, \epsilon, \|A_0\|_{H^s}$ then gives
\begin{align*}
\|\mathbf{F}(\mathcal{W}_1) - \mathbf{F}(\mathcal{W}_2)\|_\mathfrak{X} & = \|\mathbf{F}(\mathcal{W}_1)_1 - \mathbf{F}(\mathcal{W}_2)_1\| \\
& \quad + \|\mathbf{F}(\mathcal{W}_1)_2 - \mathbf{F}(\mathcal{W}_2)_2\| \\
& \quad + 4\|\mathbf{F}(\mathcal{W}_1)_3 - \mathbf{F}(\mathcal{W}_2)_3\| \\
& \leq \left(C_{(s, q, k, \epsilon, \|A_0\|_{H^s})}h + \frac{1}{4} + \frac{4}{16}\right)\|\mathcal{W}_1 - \mathcal{W}_2\|_\mathfrak{X} \\
& \leq \frac34 \|\mathcal{W}_1 - \mathcal{W}_2\|_\mathfrak{X}
\end{align*}
as desired.
\end{proof}

\begin{remark}\label{BringDownHugeConstant}
The constant $\mathscr{C}$ in the above proof is universal, and is chosen to be much larger than necessary.  By introducing more parameters, one can refine the proof of Lemma \ref{NanoLocalWellposedness} by replacing various factors of 2 and 3 by factors arbitrarily close to 1.  In so doing, we can show that in fact $\mathscr{C}$ can be brought to within an order of magnitude of 1.  In principle one could therefore trace through this argument and specify more precisely that the degree of the polynomial bound in Theorem \ref{BigSummaryTheorem} is also within an order of magnitude of 1.  Therefore this method can give an \textit{effective} bound on the polynomial growth of $\|A\|_{H^s}$.
\end{remark}

An easy modification of the above proof also proves the following ``first step'' of the construction for local well-posedness on $I_0$.

\begin{lemma}\label{FirstStepLWP}
(Construction of approximate solution on $I_0$.)  Let $n \in \mathbb{N}$ and $j = 1, 2, \ldots, n - 1$ be given.  Let $M \leq 2$ be given, let $\lambda_*$ satisfy
\begin{equation}
\epsilon^{q + 4} g^\prime(\lambda_*) = \frac{32}{\mathscr{C}}
\end{equation}
and define $g_\star$ as in \eqref{DefnGStar} above.

Suppose further that we are given initial data $(u_0, v_0, w_0)$ satisfying the properties
\begin{itemize}
\item[(i)]{$u_0 = \mathcal{B}_k u_0$, $v_0 = \mathcal{B}_k v_0$, $w_0 = \mathcal{B}_k w_0$,}
\item[(ii)]{$\max(\|u_0\|, \omega^{-1}\|v_0\|, \omega^{-2}\|w_0\|) \leq M\|A_0\|_{H^s}$}
\item[(iii)]{$\|\Lambda^{-s}_\epsilon u_j\|_{L^\infty} \leq 2\epsilon \|A_0\|$.}
\end{itemize}

Define $$\lambda(t) = \frac{\|\,\Lambda^s_\epsilon z_{tt}(t)\|^2}{\nu^2 \omega^4 \|A_0\|_{H^s}^2}$$  Then for any $\mathscr{T}_1 > 0$ for which $T_1 + \mathscr{T}_1 \leq \mathscr{T}$, there is a $t_0 > 0$ chosen sufficiently small depending on $s, p, q, k, \epsilon, \|A_0\|_{H^s}, \epsilon$, there exists a solution 
$$(z, z_t, z_{tt}) \in (C^2 \times C^1\times C^0)(I_0 : L^2)$$
to the initial value problem
\begin{equation}\label{ApproxEqnOnSubinterval}
\begin{cases}
z_{tt} + |D|^p z + \epsilon^2\mathcal{B}_k(z|z/\epsilon|^{q - 1}) + \mathcal{N} g_\star(m_n^{(0)}\lambda(t)) = 0 \qquad (x, t) \in \mathbb{R}^2 \times I_{0} \\
\mathcal{N} := \mathcal{B}_k \left(\epsilon^{q + 4}\omega^2z + 2\omega^2i (\epsilon^7 t + \epsilon^5(T_1 + \mathscr{T}_1))z|z|^{q - 1}\right) \\
z(0) = \Lambda^{-s}_\epsilon u_0 \\
z_t(0) = \Lambda^{-s}_\epsilon v_0 \\
z_{tt}(0) = \Lambda^{-s}_\epsilon w_0
\end{cases}
\end{equation}
which satisfies
$$\max_{t \in I_{1}} \left( \|\Lambda^s_\epsilon z(t)\|, \omega^{-1}\|\Lambda^s_\epsilon z_t(t)\|, \omega^{-2}\|\Lambda^s_\epsilon z_{tt}(t)\|  \right) \leq \left(M + C\frac{t_0}{n}\right)\|A_0\|_{{H}^s}$$
for a constant $C$ depending on $s, p, q, k, \epsilon, \|A_0\|_{H^s}$
\end{lemma}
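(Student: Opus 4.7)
The plan is to adapt the contraction mapping argument from the proof of Lemma \ref{NanoLocalWellposedness}, specializing to the initial interval $I_0$. The simplification here is that, since we are at $t = 0$, there is no jump discontinuity to resolve: the data $(u_0, v_0, w_0)$ satisfying the compatibility condition \eqref{IdealW0Definition} is supplied directly by part (a) of Proposition \ref{zLocalWellPosed}, so we do not need to introduce an auxiliary unknown $w_j^+$ or build the non-perturbative jump component into the contraction space. This means the entire construction reduces to a purely perturbative DuHamel argument in the two unknowns $(v(t), w(t)) := (\Lambda^s_\epsilon z_t, \Lambda^s_\epsilon z_{tt})$, recovering $u(t) = u_0 + \int_0^t v(\tau)\,d\tau$ as before.

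First I would apply $\Lambda^s_\epsilon$ to the equation and differentiate once in time to obtain a system $\partial_t(v, w)^T = \mathscr{L}(v, w)^T + (0, \mathscr{N}(t))^T$, where $\mathscr{L}$ and $\mathscr{N}$ are as in \eqref{LSemigroup}, \eqref{DifferentiatedNonlinearity} but with $m_n^{(j)}\lambda$ replaced by the single average $m_n^{(0)}\lambda$. Then I would consider the DuHamel formulation analogous to \eqref{JustDuhamel} on the metric space
\begin{align*}
\mathfrak{X}_0 = \Bigl\{ (v, w) \in C(I_0 : L^2)^2 :\ & v(0) = v_0,\ w(0) = w_0,\ (v, w) = \mathcal{B}_k(v, w), \\
& \|v\|_{C(I_0 : L^2)} \leq \omega\bigl(M + C t_0/n\bigr)\|A_0\|_{H^s}, \\
& \|w\|_{C(I_0 : L^2)} \leq \omega^2\bigl(M + C t_0/n\bigr)\|A_0\|_{H^s} \Bigr\},
\end{align*}
equipped with the norm $\|(v, w)\|_{\mathfrak{X}_0} = \|v\|_{C(I_0:L^2)} + \|w\|_{C(I_0:L^2)}$. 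The self-mapping property follows by using the boundedness $|g_\star| \leq 1$, together with Lemma \ref{RescaledMoserInequality} to control the power-nonlinearity contributions to $\mathscr{N}$, and the mode filter $\mathcal{B}_k$ to exploit the symbol expansion of $\cos(|D|^{p/2}t)$ and $|D|^{-p/2}\sin(|D|^{p/2}t)$ on the support of $\hat{\mathcal{B}}_k$; each DuHamel integral contributes an extra factor of $h = t_0/n$, and the $v$-component gains a further factor of $h$ from the time integration defining $u(t)$.

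The contraction estimate is entirely analogous to the estimate of $\|\mathbf{F}(\mathcal{W}_1)_2 - \mathbf{F}(\mathcal{W}_2)_2\|$ in the proof of Lemma \ref{NanoLocalWellposedness}. Differences $\mathscr{N}_1 - \mathscr{N}_2$ split into three types, involving $u_1 - u_2$, $v_1 - v_2$, or $g_\star(m^{(0)}_n \lambda_1) - g_\star(m^{(0)}_n \lambda_2)$. The first is bounded by $C h^2$ times the $\mathfrak{X}_0$-norm of the difference; the second by $C h$ times that norm; the third, which is the only potentially dangerous contribution from the full nonlinearity, is controlled by using the cutoff bound $g_\star^\prime \leq 32\epsilon^{-(q+4)}/\mathscr{C}$ and dividing the numerator by $\nu^2 \omega^4\|A_0\|_{H^s}^2$ in the definition of $\lambda$, giving a bound of the form $C_{(s,q,k,\epsilon,\|A_0\|_{H^s})} h \|w_1 - w_2\|$. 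Taking $t_0/n$ sufficiently small depending on $s, p, q, k, \epsilon, \|A_0\|_{H^s}$ makes the operator a strict contraction.

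The main obstacle, though minor here, is choosing the norm weights and constants so that the self-mapping property is genuinely closed: in particular one must verify that starting from $\max(\|u_0\|, \omega^{-1}\|v_0\|, \omega^{-2}\|w_0\|) \leq M\|A_0\|_{H^s}$ and using the DuHamel form, the new iterate stays within $(M + C t_0/n)\|A_0\|_{H^s}$ in each of the three scaled norms. Since the data for $w$ is prescribed (rather than being reconstructed as a fixed point of the jump equation), the required bound $\omega^{-2}\|\Lambda^s_\epsilon z_{tt}\| \leq (M + C t_0/n)\|A_0\|_{H^s}$ is immediate from the DuHamel representation and the perturbative estimate of $\mathscr{N}$, provided $h = t_0/n$ is taken small enough and $\omega$, $\mathscr{C}$ are chosen as in Lemma \ref{NanoLocalWellposedness}. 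The regularity statement $(z, z_t, z_{tt}) \in (C^2 \times C^1 \times C^0)(I_0 : L^2)$ then follows by unwinding the DuHamel identities and using the continuity of the semigroup $e^{\mathscr{L}t}$ in $L^2$.
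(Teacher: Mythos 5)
Your proposal is correct and is essentially the paper's own proof: the paper likewise reproduces the contraction mapping argument of Lemma \ref{NanoLocalWellposedness} while omitting the third (jump) component of \eqref{ContractionMapping}, since $w_0$ is prescribed by hypothesis, making the argument purely perturbative. The details you supply (the DuHamel system in $(v,w)$, the three types of differences in $\mathscr{N}_1 - \mathscr{N}_2$, and the use of $g_\star^\prime \leq 32\epsilon^{-(q+4)}/\mathscr{C}$) are exactly the ones the paper refers to and omits.
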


\begin{proof}
The proof reproduces the contraction mapping argument of Lemma \ref{NanoLocalWellposedness} but omitting the third component of the contraction mapping \eqref{ContractionMapping}, since the initial data for $z_{tt}$ in Lemma \ref{FirstStepLWP} is the $w_0$ provided by hypothesis rather than involving a right hand jump value.  Since the argument is a subset of the proof of Lemma \ref{NanoLocalWellposedness} and is purely perturbative, it is strictly easier in this case, and so we omit the details.
\end{proof}

\subsection{Constructing the Approximate Solution on all of $[0, 2t_0]$.}

Our goal in this section is to use Lemma \ref{NanoLocalWellposedness} successively on the intervals $[0, t_1]$, $[t_1, t_2]$, $\ldots$, $[t_{n - 1}, t_n]$ to construct an approximate solution $z^{\{n\}}$ to the initial value problem
\begin{equation}\label{ApproxEqnOnSubinterval}
\begin{cases}
z_{tt} + |D|^p z + \epsilon^2\mathcal{B}_k(z|z/\epsilon|^{q - 1}) + \mathcal{N} g_\star(m_n \lambda(t)) = 0 \qquad (x, t) \in \mathbb{R}^2 \times [0, 2t_0] \\
\mathcal{N} := \epsilon^{5} \mathcal{B}_k \left(\epsilon^{q + 4} \omega^2 z + 2i\omega^2(\epsilon^7t + \epsilon^5 (T_1 + \mathscr{T}_1)) z|z|^{q - 1}\right) \\
z(t_j) = \Lambda^{-s}_\epsilon u_0 \\
z_t(t_j) = \Lambda^{-s}_\epsilon v_0 \\
\end{cases}
\end{equation}

If one directly applies Lemmas \ref{FirstStepLWP} and \ref{NanoLocalWellposedness}, one cannot in general construct such a solution for all $n$.  Suppose that we know that the norm of $w$ is initially of the form $M_0\omega^2\|A_0\|_{H^s}$ for some $M_0 < 2$.  Since $z_{tt}$ on the $j$th interval can be bounded at best by $(M_0 + j)\omega^2\|A_0\|_{H^s}$, and so the condition $M \leq 2$ must be violated for sufficiently large $n$.  The condition $M \leq 2$ itself must not be neglected: if we allowed $M$ to be arbitrarily large, then inspecting the proof of Lemma \ref{NanoLocalWellposedness} shows that the crucial growth constant $\mathscr{C}$ must also be arbitrarily large if one hopes to construct a sequence of approximate solutions for all $n$.  To remedy this, we appeal to the almost conserved Hamiltonian to provide the necessary uniform bounds.

\begin{lemma} (Uniform bounds from almost conservation.) \label{AlmostConserved}
Let $\mathscr{T}_0 \leq \mathscr{T}$ be given, and suppose $\mathscr{P} : [0, \mathscr{T}_0\epsilon^{-2}] \to \mathbb{R}$ satisfies $|\mathscr{P}(t)| \leq 1$ for all $t \in [0, \mathscr{T}_0\epsilon^{-2}]$.  Suppose there exists a function $\mathfrak{z}(t)$ satisfying $\mathfrak{z}, \mathfrak{z}_t, \in C([0, \mathscr{T}_0\epsilon^{-2}], L^2)$ and $\mathfrak{z}_{tt} \in L^\infty([0, \mathscr{T}_0\epsilon^{-2}], L^2)$ and which solves almost everywhere in $[0, \mathscr{T}_0\epsilon^{-2}]$ the initial value problem
\begin{equation}\label{GeneralParentEquation}
\begin{cases}
\mathfrak{z}_{tt} + |D|^p \mathfrak{z} + \epsilon^2\mathcal{B}_k(\mathfrak{z}|\epsilon^{-1}\mathfrak{z}|^{q - 1}) + \mathcal{B}_k( \epsilon^{q + 4} \mathfrak{z} + 2i(\epsilon^7t + \epsilon^5T_1)\mathfrak{z}|\mathfrak{z}|^{q - 1} ) \mathscr{P}(t) = 0 \\
\mathfrak{z}(0) = \mathfrak{u}_0 \\
\mathfrak{z}_t(0) = \mathfrak{v}_0
\end{cases}
\end{equation}
for which $\mathfrak{u}_0 = \mathcal{B}_k \mathfrak{u}_0$ and $\mathfrak{v}_0 = \mathcal{B}_k \mathfrak{v}_0$.  Suppose further that $\mathfrak{E}(0)$ is uniformly bounded away from zero in $\epsilon$ over all $0 < \epsilon < \epsilon_0$.  Then 
\begin{itemize}
\item[\textbf{(a)}]{The solution satisfies $\mathfrak{z} = \mathcal{B}_k \mathfrak{z}$, and the Hamiltonian energy
\begin{equation}
\mathfrak{E}(t) := \frac12\|\mathfrak{z}_t(t)\|^2 + \frac12\|\,|D|^\frac{p}{2} \mathfrak{z}(t)\|^2 + \frac{\epsilon^{3 - q}}{q + 1}\left\|\mathfrak{z}\right\|_{L^{q + 1}}^{q + 1}
\end{equation}
satisfies the a priori bound
\begin{equation}\label{L2APrioriBound}
\mathfrak{E}(t) \leq 2\mathfrak{E}(0), \qquad 0 \leq t \leq \mathscr{T}_0 \epsilon^{-2}
\end{equation}
provided $\epsilon_0 > 0$ is chosen sufficiently small depending on $s, p, q, k, \mathscr{T}, \|A_0\|_{H^N}$.}
\item[\textbf(b)]{The following bounds hold:
\begin{equation}
\max_{0 \leq t \leq \mathscr{T}_0\epsilon^{-2}} \|\Lambda^s_\epsilon \mathfrak{z}(t)\| + \|\Lambda^s_\epsilon \mathfrak{z}_t(t)\| + \text{esssup}_{0 \leq t \leq \mathscr{T}_0\epsilon^{-2}} \|\Lambda^s_\epsilon \mathfrak{z}_{tt}(t)\| \leq C(s, p, q, k, \|A_0\|)\epsilon^{-(s + 1)(q + 1)}
\end{equation}
}
\item[(c)]{If $[\tau_1, \tau_2] \subset [0, \mathscr{T}_0\epsilon^{-2}]$, then
\begin{equation}
\left| \|\mathfrak{z}(\tau_2)\| - \|\mathfrak{z}(\tau_1)\| \right| + \left| \|\mathfrak{z}_t(\tau_2)\| - \|\mathfrak{z}_t(\tau_1)\| \right| \leq C(s, p, q, k, \|A_0\|)\epsilon^{-(s + 1)(q + 1)}|\tau_2 - \tau_1|
\end{equation}}
\item[(d)]{Suppose further that $\mathscr{P}(t) = \mathscr{P}$ is constant on some interval $[\tau_1, \tau_2]$.  Then for all $t \in [\tau_1, \tau_2]$ we have the estimate
\begin{equation}
\bigl| \|\Lambda^s_\epsilon \mathfrak{z}_{tt}(t)\| - \|\Lambda^s_\epsilon \mathfrak{z}_{tt}(0)\|\bigr| \leq C(s, p, q, k, \|A_0\|)\epsilon^{-(s + 1)(q + 1)} |\tau_2 - \tau_1|
\end{equation}}
\end{itemize}
\end{lemma}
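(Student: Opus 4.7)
The strategy is to treat \eqref{GeneralParentEquation} as a defocusing Hamiltonian equation in its first three terms, perturbed by the $\mathscr{P}$-dependent term whose amplitude by inspection is $O(\epsilon^5)$ on the time scale $\mathscr{T}_0\epsilon^{-2}$. Before running any energy estimate I would first verify the frequency localization claim of (a): since $\mathfrak{u}_0, \mathfrak{v}_0$ lie in the range of $\mathcal{B}_k$, every nonlinear term is pre-composed with $\mathcal{B}_k$, and $|D|^p$ commutes with the Fourier multiplier $\mathcal{B}_k$, applying $I - \mathcal{B}_k$ to the equation gives a homogeneous linear wave equation for $(I - \mathcal{B}_k)\mathfrak{z}$ with zero initial data, so $\mathfrak{z} = \mathcal{B}_k \mathfrak{z}$ throughout the existence interval. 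Thereafter Bernstein-type bounds are available freely: $\|\mathfrak{z}\|_{L^\infty} \leq C_k \|\mathfrak{z}\|$, $\|\Lambda^s_\epsilon \mathfrak{z}\| \leq C_{k,s}\epsilon^{-s}\|\mathfrak{z}\|$, as is the lower bound $\||D|^{p/2}\mathfrak{z}\|^2 \geq (k/2)^p \|\mathfrak{z}\|^2$.

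For the energy bound in (a), I would pair the equation with $\mathfrak{z}_t$ in the paper's real $L^2$ inner product; the three conservative terms reassemble exactly $\frac{d\mathfrak{E}}{dt}$, leaving
\[
\frac{d\mathfrak{E}}{dt} = -\langle \mathfrak{z}_t,\; \mathcal{B}_k(\epsilon^{q+4}\mathfrak{z} + 2i(\epsilon^7 t + \epsilon^5 T_1)\mathfrak{z}|\mathfrak{z}|^{q-1})\mathscr{P}(t)\rangle.
\]
Applying Cauchy--Schwarz, Bernstein to the $|\mathfrak{z}|^{q-1}$ factor, $|\mathscr{P}| \leq 1$, the time bound $|\epsilon^7 t + \epsilon^5 T_1| \leq C\epsilon^5$ on $[0, \mathscr{T}_0\epsilon^{-2}]$, together with the lower bound on $\||D|^{p/2}\mathfrak{z}\|$, yields a differential inequality of the form $|\mathfrak{E}'(t)| \leq C(\epsilon^{q+4} + \epsilon^5 \mathfrak{E}(t)^{(q-1)/2})\mathfrak{E}(t)$. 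A bootstrap under the assumption $\mathfrak{E}(t) \leq 2\mathfrak{E}(0)$ collapses this to $|\mathfrak{E}'(t)| \leq C_{\mathfrak{E}(0)}(\epsilon^{q+4} + \epsilon^5)\mathfrak{E}(t)$, and Grönwall over $[0, \mathscr{T}_0\epsilon^{-2}]$ produces a growth factor of $\exp(C_{\mathfrak{E}(0)}\epsilon^3 \mathscr{T})$, which is below $2$ once $\epsilon_0$ is small enough, closing the bootstrap.

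Parts (b)--(d) then follow as essentially routine corollaries. For (b), the $L^2$ bounds on $\mathfrak{z}$ and $\mathfrak{z}_t$ are extracted from $\mathfrak{E}(t)$, a factor of $\epsilon^{-s}$ is paid via Bernstein to pass to $\Lambda^s_\epsilon$, and $\Lambda^s_\epsilon \mathfrak{z}_{tt}$ is read off the equation and estimated term by term using Lemma \ref{RescaledMoserInequality} combined with Bernstein on the $L^\infty$ factors of the power nonlinearity. For (c), the Fundamental Theorem of Calculus yields $|\|\mathfrak{z}(\tau_2)\| - \|\mathfrak{z}(\tau_1)\|| \leq \int_{\tau_1}^{\tau_2} \|\mathfrak{z}_t\|\,dt$ and similarly for $\mathfrak{z}_t$ with $\mathfrak{z}_{tt}$ as integrand, the integrands being uniformly controlled by (b). For (d), with $\mathscr{P}$ constant on $[\tau_1, \tau_2]$, subtracting the equation at $t$ and $\tau_1$ kills the difference of $\mathscr{P}$-factors and gives
\[
\mathfrak{z}_{tt}(t) - \mathfrak{z}_{tt}(\tau_1) = -|D|^p(\mathfrak{z}(t) - \mathfrak{z}(\tau_1)) - (\text{nonlinear differences});
\]
applying $\Lambda^s_\epsilon$, using Bernstein against the $\mathcal{B}_k$-localization to absorb $|D|^p$ and $\Lambda^s_\epsilon$, and invoking the Lipschitz bound from (c) on each factor of the nonlinear differences yields a bound linear in $|t - \tau_1|$.

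The main obstacle will be the Grönwall step in (a): the power $\mathfrak{E}^{(q+1)/2}$ in the differential inequality is supercritical for $q \geq 3$, so the bootstrap must be arranged so that the $\epsilon^5 \mathfrak{E}(0)^{(q-1)/2}$ factor, multiplied by the time scale $\epsilon^{-2}$, still produces a net smallness gain on the order of $\epsilon^3$. The hypothesis that $\mathfrak{E}(0)$ does not degenerate in $\epsilon$ is exactly what prevents $\mathfrak{E}(0)^{(q-1)/2}$ from absorbing this gain, allowing the constant $C_{\mathfrak{E}(0)}$ to be absorbed into the choice of $\epsilon_0$.
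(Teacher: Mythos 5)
Your proof is correct and, for parts (a)--(c), follows essentially the same route as the paper: the same energy identity $\mathfrak{E}'(t) = -\langle \mathfrak{z}_t, \mathcal{N}\,\mathscr{P}(t)\rangle$, the same Bernstein-type exploitation of the localization $\mathfrak{z} = \mathcal{B}_k\mathfrak{z}$, a continuity/bootstrap closure using the net smallness $\epsilon^5\cdot\epsilon^{-2} = \epsilon^3$, and (b), (c) read off from the energy, the equation, and the Fundamental Theorem of Calculus. The one genuine divergence is in (d): the paper quasilinearizes, writing the time-differentiated system in DuHamel form for the pair $(\mathfrak{z}_t,\mathfrak{z}_{tt})$ (legitimate precisely because $\mathscr{P}$ is constant, so no $\mathscr{P}'$ appears) and extracting the factor of $|\tau_2-\tau_1|$ from the symbol expansions of $\sin(|D|^{p/2}t)$ and $\cos(|D|^{p/2}t)$; you instead difference the undifferentiated equation at two times and feed the Lipschitz-in-time $L^2$ bounds from (c) into the nonlinear differences. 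Your version is more elementary and works equally well here (the constancy of $\mathscr{P}$ enters in the same way, to kill the $\mathcal{N}(\tau_1)(\mathscr{P}(t)-\mathscr{P}(\tau_1))$ contribution), at the cost of a possibly cruder power of $\epsilon^{-1}$ in the constant, which is harmless since the lemma is only used qualitatively. One small correction to your closing paragraph: in your multiplicative Grönwall formulation, what you actually need is an \emph{upper} bound on $\mathfrak{E}(0)$ (available from the wave-packet initial data) to control $C_{\mathfrak{E}(0)}$; the hypothesis that $\mathfrak{E}(0)$ is bounded \emph{away from zero} is what the paper uses to close its additive version of the bootstrap, namely $\mathfrak{E}(0) + C\epsilon^2 < 2\mathfrak{E}(0)$, and is not what prevents the $\mathfrak{E}(0)^{(q-1)/2}$ factor from spoiling the estimate.
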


\begin{proof}
\textbf{Proof of (a).}  The fact that $\mathfrak{z} = \mathcal{B}_k \mathfrak{z}$ is immediate once \eqref{GeneralParentEquation} is written in DuHamel form.  Differentiating $\mathfrak{E}(t)$ and using \eqref{GeneralParentEquation} yields the following differential inequality valid almost everywhere in $[0, \mathscr{T}_0\epsilon^{-2}]$:
\begin{align*}
\mathfrak{E}^\prime(t) & = \epsilon^5 \langle \mathfrak{z}_t, \mathcal{B}_k (\epsilon^{q + 4}\mathfrak{z} + 2i(\epsilon^7 t + \epsilon^5 T_1)\mathfrak{z}|\mathfrak{z}|^{q - 1}) \rangle  \mathscr{P}(t) \quad \text{a.e. } t \in [0, \mathscr{T}_0\epsilon^{-2}]\\
& \leq C\epsilon^5 \mathfrak{E}(t)^\frac12 (\epsilon^{7} \|\mathfrak{z}\| + C_\mathbf{T} \epsilon^5 \|\mathfrak{z}\| \|\mathfrak{z}\|_{L^\infty}^{q - 1}) \quad \text{a.e. } t \in [0, \mathscr{T}_0\epsilon^{-2}]\\
& \leq C_p \mathfrak{E}(t) (\epsilon^{7} \|\mathfrak{z}\| + C_{(q, \mathbf{T})} \epsilon^5 \|\mathfrak{z}\| \|\mathcal{B}_k\mathfrak{z}\|_{H^2}^{q - 1}) \quad \text{a.e. } t \in [0, \mathscr{T}_0\epsilon^{-2}]\\
& \leq C_{(p, q, k, \mathbf{T})} \epsilon^5 (\mathfrak{E}(t) + \mathfrak{E}(t)^{\frac{q - 1}{2}})\quad \text{a.e. } t \in [0, \mathscr{T}_0\epsilon^{-2}]
\end{align*}
We observe that $\mathfrak{E}(t)$ itself is an absolutely continuous function in $t$.  We begin a bootstrap argument: suppose that there is a first time $t_* \in [0, \mathscr{T}_0\epsilon^{-2})$ at which $\mathfrak{E}(t_*) = 2\mathfrak{E}(0)$.  If so, the above inequality would imply that $\mathfrak{E}^\prime(t) \leq C_{(p, q, k, \mathbf{T}, \mathfrak{E}(0))} \epsilon^5$ whenever $t \in [0, t_*]$.  But then integrating would give
\begin{align*}
\mathfrak{E}(t) \leq \mathfrak{E}(0) + C_{(p, q, k, \mathbf{T}, \mathfrak{E}(0))}\epsilon^5t \leq \mathfrak{E}(0) + C_{(p, q, k, \mathbf{T}, \mathfrak{E}(0))}\epsilon^2
\end{align*}
However, since $\mathfrak{E}(0)$ is uniformly bounded away from zero as $\epsilon \to 0$, we can choose $\epsilon$ sufficiently small so that $2\mathfrak{E}(0) = \mathfrak{E}(t_*) < 2\mathfrak{E}(0)$, which is the desired contradiction.

\vspace{0.5cm}

\textbf{Proof of (b).}  Using \eqref{L2APrioriBound} and the fact that $\mathfrak{z} = \mathcal{B}_k \mathfrak{z}$ and $\mathfrak{z}_t = \mathcal{B}_k \mathfrak{z}_t$, we have
\begin{align*}
\|\Lambda^s_\epsilon \mathfrak{z}(t)\| + \|\Lambda^s_\epsilon \mathfrak{z}_t(t)\| & \leq C\epsilon^{-s}\left(\|\mathfrak{z}(t)\| + \|\mathfrak{z}_t(t)\|\right) \\
& \leq C_{p, k}\epsilon^{-s} \mathfrak{E}(t)^\frac12 \\
& \leq C_{p, k}\epsilon^{-s} \mathfrak{E}(0)^\frac12 \\
& \leq C(p, k, \|A_0\|) \epsilon^{-s}
\end{align*}
Now appealing to the equation \eqref{GeneralParentEquation} we have for almost every $t \in [0, \mathscr{T}_0\epsilon^2]$ that
\begin{align*}
\|\mathfrak{z}_{tt}(t)\| & \leq \||D|^p \mathfrak{z}\| + \epsilon^2\|\mathfrak{z}|\mathfrak{z}/\epsilon|^{q - 1}\| + \|\mathcal{N}\| |\mathscr{P}(t)| \\
& \leq C(p, k, \|A_0\|)\epsilon^{-s} + (\epsilon^{-q} + \epsilon^5)C(p, q, \|A_0\|)\epsilon^{-s(q - 1)} \\
& \leq C(p, q, k, \|A_0\|)\epsilon^{-(s + 1)(q + 1)}
\end{align*}

\vspace{0.5cm}

\textbf{Proof of (c).}  This follows from (b) and the Fundamental Theorem of Calculus.

\vspace{0.5cm}

\textbf{Proof of (d).}  Writing \eqref{GeneralParentEquation} in the DuHamel formulation and applying $\Lambda^s_\epsilon$ gives the identity
\begin{align*}
\Lambda^s_\epsilon\mathfrak{z}_{tt}(\tau_2) & = |D|^\frac{p}{2}\sin(|D|^\frac{p}{2}(\tau_2 - \tau_1))\Lambda^s_\epsilon\mathfrak{z}_t(\tau_1) + \cos(|D|^\frac{p}{2}(\tau_2 - \tau_1))\Lambda^s_\epsilon\mathfrak{z}_{tt}(\tau_1) \\
& \qquad + \int_{\tau_1}^{\tau_2} \cos(|D|^\frac{p}{2}\tau)\Lambda^s_\epsilon\mathscr{N}(\tau) \, d\tau
\end{align*}
where the nonlinearity $\mathscr{N}(t)$ is given by \eqref{DifferentiatedNonlinearity} with $g_*(m^{(j)}_n \lambda)$ replaced by $\mathscr{P}$.  Estimating the initial data using the Taylor expansions of $\sin(|D|^\frac{p}{2}t)$ and $\cos(|D|^\frac{p}{2}t)$ in frequency as well as estimating $\|\Lambda^s_\epsilon \mathcal{N}\|$ just as in Lemma \ref{NanoLocalWellposedness} gives the desired bound; we omit the details.
\end{proof}

The method of controlling $\mathfrak{z}_{tt}$ as in Lemma \ref{AlmostConserved}(d) fails on intervals of times where the full nonlinearity exhibits jump discontinuities, since we cannot differentiate in time to quasilinearize the equation.  However, we can control the size of the jump discontinuities to provide acceptable estimates on the growth of $z_{tt}$.  The exact estimates are given in the

\begin{proposition}\label{Construct ApproximateSolutions}
(Construction of the $n$th approximate solution on $[0, 2t_0]$.)  Let $n \in \mathbb{N}$ be given.  Let $(u_0, v_0, w_0)$ satisfy the hypotheses of Proposition \ref{zLocalWellPosed}(b).  Then for $t_0 > 0$ chosen sufficiently small depending on $p, q, \|A_0\|_{H^s}, 1 - \lambda_0$, a solution $z^{\{n\}}$ to \eqref{ApproxEqnOnSubinterval} exists in the class $(z^{\{n\}}, z_t^{\{n\}}, z_{tt}^{\{n\}}) \in (C^1 \times C^0 \times L^\infty)([0, 2t_0]:L^2)$ which moreover satisfies $z(t) = \mathcal{B}_k z(t)$ for all $t \in [0, 2t_0]$, for which $t \mapsto \|\Lambda^s_\epsilon z_{tt}^{\{n\}}(t)\|$ is continuous from the right on $[0, 2t_0)$, as well as the bounds
\begin{equation}\label{ZAndZtStepBounds}
\max_{t \in [0, jh)} \|\Lambda^s_\epsilon z(t)\| + \omega^{-1}\|\Lambda^s_\epsilon z_t(t)\| \leq \left(\frac32 + \frac{j}{2n}\right)\|A_0\|_{H^s} \leq 2\|A_0\|_{H^s},
\end{equation}
\begin{equation}\label{ZLInftyStepBounds}
\max_{t \in [0, jh)} \|z(t)\|_{L^\infty} \leq \left(2 + \frac{j}{n}\right)\|A_0\| \leq 3\epsilon\|A_0\|_{L^\infty},
\end{equation}
and, defining $\lambda(t) = \frac{\|\Lambda^s_\epsilon z_{tt}(t)\|^2}{\nu^2 \omega^4 \|A_0\|_{H^s}^2}$ as usual,
\begin{equation}\label{ZttStepBounds}
\max_{t \in [0, jh)} \lambda(t) \leq \left(\lambda_0 + \frac{j}{2n}(1 - \lambda_0)\right) \leq \frac{1}{2}(\lambda_0 + 1)
\end{equation}
where in particular the size of the interval of existence $[0, 2t_0]$ is independent of $j$.
\end{proposition}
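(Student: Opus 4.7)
The plan is to construct $z^{\{n\}}$ by induction on the subinterval index $j = 0, 1, \ldots, n-1$, invoking Lemma \ref{FirstStepLWP} on $I_0$ with $M_0 := \max(\|u_0\|, \omega^{-1}\|v_0\|, \omega^{-2}\|w_0\|)/\|A_0\|_{H^s} \leq 2$ for the base case, and Lemma \ref{NanoLocalWellposedness} for each subsequent $I_j$ fed with initial data $(u_j, v_j) = (\Lambda^s_\epsilon z^{\{n\}}(t_j), \Lambda^s_\epsilon z_t^{\{n\}}(t_j))$ together with the left-hand limit $w_j^- = \lim_{t \to t_j^-} \Lambda^s_\epsilon z_{tt}^{\{n\}}(t)$. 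The three bounds \eqref{ZAndZtStepBounds}-\eqref{ZttStepBounds} are maintained simultaneously as the induction hypothesis. Right-continuity of $t \mapsto \|\Lambda^s_\epsilon z_{tt}^{\{n\}}(t)\|$ on $[0, 2t_0)$ comes for free from the fact that $w_j^+$ is constructed as the third component of the fixed point of the mapping \eqref{ContractionMapping}, which by definition equals $\lim_{t \to t_j^+} \Lambda^s_\epsilon z_{tt}^{\{n\}}(t)$.

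Verifying the hypotheses of Lemma \ref{NanoLocalWellposedness} at step $j \geq 1$ is immediate: (ii) comes from \eqref{ZAndZtStepBounds} with parameter $M = 3/2 + j/(2n) \leq 2$; (iii) from \eqref{ZLInftyStepBounds}; and the $C(I_{j-1} : L^2)$-bound required of the ``old'' $w$ follows from \eqref{ZttStepBounds} rewritten as $\|w\|_{C(I_{j-1}:L^2)} \leq \nu \omega^2 \|A_0\|_{H^s}\sqrt{(1+\lambda_0)/2} \leq M \omega^2 \|A_0\|_{H^s}$. The first two inductive bounds \eqref{ZAndZtStepBounds} and \eqref{ZLInftyStepBounds} propagate straightforwardly, since Lemma \ref{NanoLocalWellposedness} furnishes only an incremental $O(t_0/n)$ growth per subinterval; this sums to total growth $O(t_0)$, which is forced below $\|A_0\|_{H^s}/2$ (respectively $\epsilon\|A_0\|_{L^\infty}$) by shrinking $t_0$ depending on $s, p, q, k, \|A_0\|_{H^s}$.

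The genuine obstacle is maintaining \eqref{ZttStepBounds}: the output bound $\|w_j^+\| \leq (M+1)\omega^2\|A_0\|_{H^s}$ furnished by Lemma \ref{NanoLocalWellposedness} is far too crude, since it permits $\lambda$ to grow by $O(1)$ per subinterval. Two finer estimates must be combined. On each $I_j$, the coefficient $\mathscr{P} = g_\star(m_n^{(j)}\lambda)$ of the full nonlinearity is constant with $|\mathscr{P}| \leq 1$, so Lemma \ref{AlmostConserved}(d) applies to the discretized progenitor equation restricted to $I_j$ and bounds the within-subinterval oscillation of $\|\Lambda^s_\epsilon z_{tt}\|$ by $C(s,p,q,k,\|A_0\|) \epsilon^{-(s+1)(q+1)} h$. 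Translating via $\lambda = \|\Lambda^s_\epsilon z_{tt}\|^2/(\nu^2\omega^4\|A_0\|_{H^s}^2)$ and summing over all $n$ subintervals yields total within-subinterval $\lambda$-growth of $O(t_0 \epsilon^{-(s+1)(q+1)}/(\omega^2\|A_0\|_{H^s}))$, which is forced below $(1-\lambda_0)/4$ by shrinking $t_0$ further depending on $\epsilon, s, p, q, k, \|A_0\|_{H^s}, 1 - \lambda_0$.

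For the boundary jumps, subtracting the discretized equations on $I_{j-1}$ and $I_j$ and passing to limits yields $\|w_j^+ - w_j^-\| \leq \|\mathcal{N}_j\| \, |g_\star(m_n^{(j)}\lambda) - g_\star(m_n^{(j-1)}\lambda)|$; the same estimates used for $\|F(\mathcal{W})_3\|$ in the proof of Lemma \ref{NanoLocalWellposedness}, namely $\|\mathcal{N}_j\| \leq C\omega^2 \epsilon^{q+4}\|A_0\|_{H^s}$ combined with $|g_\star'| \leq 32 \mathscr{C}^{-1} \epsilon^{-(q+4)}$, give a jump in $\lambda$ at $t_j$ of size at most $C_\star \mathscr{C}^{-1} |m_n^{(j)}\lambda - m_n^{(j-1)}\lambda|$ for a constant $C_\star$ independent of $n$. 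Since consecutive averages obey $|m_n^{(j)}\lambda - m_n^{(j-1)}\lambda| \leq \mathrm{osc}_{I_{j-1} \cup I_j} \lambda$, a telescoping argument gives $\sum_{j=1}^{n-1} |m_n^{(j)}\lambda - m_n^{(j-1)}\lambda| \leq 2(\max_{[0,nh)}\lambda - \lambda_0) \leq 1 - \lambda_0$ under the inductive hypothesis, so the total jump contribution to $\max \lambda$ is at most $C_\star \mathscr{C}^{-1}(1 - \lambda_0) \leq (1-\lambda_0)/4$ for the universal choice $\mathscr{C} = 10^5$ made in Lemma \ref{NanoLocalWellposedness}. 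Combining the within-subinterval and jump contributions closes the induction and yields the claimed bound $\max_{t\in[0,jh)} \lambda(t) \leq \lambda_0 + (j/2n)(1-\lambda_0)$.
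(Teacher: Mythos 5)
Your overall architecture matches the paper's: induction over the subintervals, Lemma \ref{FirstStepLWP} for the base case, Lemma \ref{NanoLocalWellposedness} for the inductive step with the left-hand limit $w_j^-$ as data, Lemma \ref{AlmostConserved} for the within-subinterval oscillation of $\lambda$, and a separate treatment of the boundary jumps. The propagation of \eqref{ZAndZtStepBounds} and \eqref{ZLInftyStepBounds} is fine (the paper gets the first via Lemma \ref{AlmostConserved}(c) and the second by integrating the equation twice, but your summation of the $O(t_0/n)$ increments is equivalent).

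There is, however, a genuine gap in your control of the boundary jumps. You bound each jump in $\lambda$ at $t_j$ by $C_\star\mathscr{C}^{-1}\lvert m_n^{(j)}\lambda - m_n^{(j-1)}\lambda\rvert$ and then claim that a ``telescoping argument'' gives
$\sum_{j}\lvert m_n^{(j)}\lambda - m_n^{(j-1)}\lambda\rvert \leq 2(\max\lambda - \lambda_0)$.
This inequality bounds the \emph{total variation} of the sequence of averages by the \emph{range} of $\lambda$, which is false unless $\lambda$ (or the sequence of averages) is monotone: if the averages alternate, say $m^{(0)} = 0,\, m^{(1)} = \delta,\, m^{(2)} = 0,\dots$, the left side is $(n-1)\delta$ while the right side is $2\delta$. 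Nothing in the construction rules out such oscillation a priori --- the jumps can have either sign --- so your total jump contribution is not controlled uniformly in $n$, and the induction does not close. The paper avoids this by bounding each jump \emph{individually} by $Ct_0/n$: it splits $\lvert m_n^{(j)}\lambda - m_n^{(j-1)}\lambda\rvert$ into three pieces, $\lvert m_n^{(j)}\lambda - \lambda(t_j^+)\rvert$ and $\lvert m_n^{(j-1)}\lambda - \lambda(t_j^-)\rvert$ (each $O(t_0/n)$ by the Mean Value Theorem and Lemma \ref{AlmostConserved}(d), since the coefficient of the penalization is constant on each subinterval) plus the jump $\lvert\lambda(t_j^+) - \lambda(t_j^-)\rvert$ itself; because this last piece re-enters the estimate multiplied by $\|\mathcal{N}_j\|\,\sup g_\star' \leq 1/2$, it can be absorbed into the left-hand side, yielding a per-jump bound of $Ct_0/n$ and hence the per-step increment $(1-\lambda_0)/(2n)$ needed for \eqref{ZttStepBounds}. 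Note also that even if your telescoping bound were valid, it would only deliver the endpoint bound $\tfrac12(1+\lambda_0)$ rather than the stated linear-in-$j$ bound $\lambda_0 + \tfrac{j}{2n}(1-\lambda_0)$; the self-absorption argument gives both.
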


\begin{proof}
We proceed by induction on $j$.  The base step is given by Lemma \ref{FirstStepLWP}.  Now assume that we have constructed a solution $z^{\{n, j\}}$ defined on $[0, jh]$ satisfying the above bounds up on the interval $[0, jh]$.  We claim that we can construct a solution $z^{\{n, j + 1\}}$ defined on $[0, (j + 1)h]$ and satisfying the bounds \eqref{ZAndZtStepBounds}-\eqref{ZttStepBounds} with $j$ replaced by $j + 1$ on $[0, (j + 1)h]$.  

Choose $z^{\{n, j\}}(t_j), z^{\{n, j\}}_t(t_j)$ as the initial data for the initial value problem \eqref{ApproxEqnOnSubinterval}, as well as $z^{\{n, j\}}_{tt}(t_j) = z^{\{n, j\}}_{tt}(t_j^-)$ the left-hand value of the jump discontinuity at $t = t_j$.  These data satisfy the hypotheses of Lemma \ref{NanoLocalWellposedness} with the choice $M = 2$ uniformly in $j$.  By Lemma \ref{NanoLocalWellposedness}, there exists a solution $z^{[n, j]}$ to \eqref{ApproxEqnOnSubinterval} defined on $[jh, (j + 1)h]$ that satisfies the estimates
$$\|z^{[n, j]}_{tt}(t_j^+)\| \leq 4\|A_0\|_{H^s}\omega^2$$
and a solution $(z, z_t, z_{tt}) \in (C^2 \times C^1 \times C^0)(I_j : L^2)$ to the initial value problem
\begin{equation}\label{ApproxEqnOnSubinterval}
\begin{cases}
z_{tt} + |D|^p z + \epsilon^2\mathcal{B}_k(z|z/\epsilon|^{q - 1}) + \mathcal{N} g_\star(m_n^{(j)}\lambda(t)) = 0, \quad (x, t) \in \mathbb{R}^2 \times I_{j + 1} \\
\mathcal{N} := \mathcal{B}_k\left(\epsilon^{q + 4}\omega^2 z + 2i\omega^2(\epsilon^7 t + \epsilon^5(T_1 + \mathscr{T}_1))z|z|^{q - 1}\right) \\
z^{[n, j]}(t_j) =  z^{\{n, j\}}(t_j) \\
z^{[n, j]}_t(t_j) =  z^{\{n, j\}}_t(t_j) \\
\end{cases}
\end{equation}
which satisfies
$$\max_{t \in I_{j}} \left( \|\Lambda^s_\epsilon z(t)\|, \omega^{-1}\|\Lambda^s_\epsilon z_t(t)\| \right) \leq \left(\frac32 + C\frac{t_0}{n}\right)\|A_0\|_{{H}^s}$$
and
$$\max_{t \in I_j} \|\Lambda^s_\epsilon z_{tt}(t)\| \leq \left(3 + C \frac{t_0}{n}\right)\|A_0\|_{H^s}$$
for a constant $C$ depending on $s, p, q, \|A_0\|_{H^s}$.  Denote $$z^{\{n, j + 1\}}(t) := \begin{cases} z^{\{n, j\}}(t) & 0 \leq t < t_j \\ z^{[n, j]}(t) & t_j \leq t \leq t_{j + 1} \end{cases}$$

Now we can apply Lemma \ref{AlmostConserved}(c) to $z^{\{n, j + 1\}}$ to conclude the bounds
\begin{align*}
\max_{0 \leq t \leq t_{j + 1}} \left( \|\Lambda^s_\epsilon z(t)\|, \omega^{-1}\|\Lambda^s_\epsilon z_t(t)\| \right) & \leq \left(\frac32 + C_{(p, q, k, \|A_0\|, \epsilon)}(j + 1)h\right)\|A_0\|_{{H}^s} \\
& \leq \left(\frac32 + \frac{(j + 1)}{2n}\right)\|A_0\|_{{H}^s}
\end{align*}
where we have chosen $t_0 > 0$ sufficiently small depending on $p, q, \|A_0\|, \epsilon$, and independently of $j$.

The estimate of $L^\infty$ can proceed directly using the equation:  integrating twice in time yields the formula for $0 \leq t \leq (j + 1)h$:
\begin{align*}
z^{\{n, j + 1\}}(t) & = z^{\{n, j + 1\}}(0) + tz^{\{n, j + 1\}}_t(0) \\
& \quad + \int_0^t \int_0^\tau \epsilon^2z^{\{n, j + 1\}}(\tau^\prime)|z^{\{n, j + 1\}}(\tau^\prime)/\epsilon|^{q - 1} + \mathcal{N}(z^{\{n, j + 1\}}(\tau^\prime))g_\star(m\lambda(z^{\{n, j + 1\}}_{tt}(\tau^\prime)) \, d\tau_1 \, d\tau 
\end{align*}
which leads to
\begin{align*}
\|z^{\{n, j + 1\}}(t)\|_{L^\infty} & \leq \|z^{\{n, j + 1\}}(0)\|_{L^\infty} + t\|z^{\{n, j + 1\}}_t(0)\|_{L^\infty}\\
& \quad  + C_{\epsilon, k, \mathbf{T}} t^2 (\|z^{\{n, j + 1\}}\|_{C[0, t_{j + 1}] : L^\infty)} + \|z^{\{n, j + 1\}}\|_{C[0, t_{j + 1}] : L^\infty)}^{q - 1})
\end{align*}
Choosing $t_0$ sufficiently small depending on $\epsilon, q, k, \mathbf{T}, \|A_0\|_{H^s}$ now implies \eqref{ZLInftyStepBounds}.

\vspace{0.5cm}

It only remains to demonstrate control of $|\|\Lambda^s_\epsilon z^{\{n, j + 1\}}_{tt}(t_j^+)\| - \|\Lambda^s_\epsilon z^{\{n, j + 1\}}_{tt}(t_j^-)\||$.  Denote as usual 
\begin{equation}
\mathcal{N}_j := (\epsilon^{q + 4}\omega^2z^{\{n, j + 1\}}(t_j) + 2i(\epsilon^7 t_j + \epsilon^5(T_1 + \mathscr{T}_1))z^{\{n, j + 1\}}(t_j)|z^{\{n, j + 1\}}(t_j)|^{q - 1}
\end{equation}
Using the equation and noting that $\Lambda^s_\epsilon z^{\{n, j + 1\}}$ and $\Lambda^s_\epsilon z^{\{n, j + 1\}}_t$ are continuous at $t = t_j$, we find the following expression for the jump:
\begin{align*}
|\|\Lambda^s_\epsilon z^{\{n, j + 1\}}_{tt}(t_j^+)\| - \|\Lambda^s_\epsilon z^{\{n, j + 1\}}_{tt}(t_j^-)\|| & \leq \|\Lambda^s_\epsilon z^{\{n, j + 1\}}_{tt}(t_j^+) - \Lambda^s_\epsilon z^{\{n, j + 1\}}_{tt}(t_j^-)\| \\
& \leq \|\mathcal{N}_j\| \left| g_\star(m_n^{(j + 1)} \lambda) - g_\star(m_n^{(j)} \lambda) \right| \\
& \leq \|\mathcal{N}_j\| \max_{\lambda > 0}(g_\star(\lambda))\left|(m_n^{(j + 1)} \lambda) - (m_n^{(j)} \lambda) \right| \\
& \leq \|\mathcal{N}_j\| \max_{\lambda > 0}(g_\star(\lambda))\left|(m_n^{(j + 1)} \lambda) - \lambda(z^{\{n, j + 1\}}_{tt}(t_j^+)) \right| \\
& \quad + \|\mathcal{N}_j\|\max_{\lambda > 0}(g_\star(\lambda))\left|\lambda(z^{\{n, j + 1\}}_{tt}(t_j^+)) - \lambda(z^{\{n, j + 1\}}_{tt}(t_j^-)) \right| \\
& \quad + \|\mathcal{N}_j\|\max_{\lambda > 0}(g_\star(\lambda))\left|(m_n^{(j)} \lambda) - \lambda(z^{\{n, j + 1\}}_{tt}(t_j^-)) \right| \\
& := I_1 + I_2 + I_3
\end{align*}
We estimate $\|\mathcal{N}_j\|$ as usual, using the same values of $\omega$ and $\mathscr{C}$ chosen in the proof of Lemma \ref{NanoLocalWellposedness}:
\begin{align*}\label{N1Bound}
\|\mathcal{N}_j\| & \leq \epsilon^{q + 4}\omega^2 3\|A_0\|_{H^s} + 6C_{s, q}\epsilon^{q + 4}\omega^2 \left(2 \omega^{\frac{4}{p} - 1} \mathbf{T}\right)\|\mathfrak{u}\|_{C([0, \mathbf{T}] : L^\infty)}^{q - 1} \|A_0\|_{H^s} \\
& \leq 4\epsilon^{q + 4}\omega^2\|A_0\|_{H^s}
\end{align*}
Consider $I_1$.  Since $\lambda(z^{\{n, j + 1\}}_{tt}(t))$ is continuous on $I_{j + 1}$, by the Mean Value Theorem there is a value $t_*$ at which $m^{(j + 1)}_n \lambda = \lambda(z^{\{n, j + 1\}}_{tt}(t_*))$.  Therefore we have using Lemma \ref{AlmostConserved}(d) that
\begin{align*}
I_1 & = \|\mathcal{N}_j\| \max_{\lambda > 0}(g_\star(\lambda))\left|\lambda(z^{\{n, j + 1\}}_{tt}(t_*)- \lambda(z^{\{n, j + 1\}}_{tt}(t_j^+)) \right| \\
& \leq C_{(s, q, \|A_0\|_{H^s})} \left|\lambda(z^{\{n, j + 1\}}_{tt}(t_*))^\frac12 + \lambda(z^{\{n, j + 1\}}_{tt}(t_j^+))^\frac12 \right|\left|\lambda(z^{\{n, j + 1\}}_{tt}(t_*))^\frac12 - \lambda(z^{\{n, j + 1\}}_{tt}(t_j^+))^\frac12 \right| \\
& \leq C_{(s, q, k, \|A_0\|_{H^s})} 2\left(3 + C\frac{t_0}{n}\right) \left|\|z^{\{n, j + 1\}}_{tt}(t_*))\| - \|z^{\{n, j + 1\}}_{tt}(t_j^+)\| \right| \\
& \leq C_{(s, q, k, \|A_0\|_{H^s})} \frac{t_0}{n}
\end{align*}
Estimating $I_3$ in exactly the same way gives $I_3 \leq C_{(s, q, k, \|A_0\|)}\frac{t_0}{n}$.  To estimate $I_2$, we proceed more carefully:
\begin{align*}
I_2 & \leq \|\mathcal{N}_j\| \frac{32\epsilon^{-(q + 4)}}{\mathscr{C}}\left|\lambda(z^{\{n, j + 1\}}_{tt}(t_j^+)) - \lambda(z^{\{n, j + 1\}}_{tt}(t_j^-)) \right| \\
& \leq 4\omega^2\|A_0\|_{H^s} \frac{32}{\mathscr{C}}\left|\lambda(z^{\{n, j + 1\}}_{tt}(t_j^+)) - \lambda(z^{\{n, j + 1\}}_{tt}(t_j^-)) \right| \\
& \leq 4\omega^2\|A_0\|_{H^s} \frac{32}{\mathscr{C}}2\left(3 + C\frac{t_0}{n}\right)\omega^2\left|\frac{\|\Lambda^s_\epsilon z^{\{n, j + 1\}}_{tt}(t_j^+)\|}{\omega^4\|A_0\|_{H^s}} - \frac{\|\Lambda^s_\epsilon z^{\{n, j + 1\}}_{tt}(t_j^-)\|}{\omega^4\|A_0\|_{H^s}} \right| \\
& \leq \frac{1024}{\mathscr{C}}\left|\|\Lambda^s_\epsilon z^{\{n, j + 1\}}_{tt}(t_j^+)\| - \|\Lambda^s_\epsilon z^{\{n, j + 1\}}_{tt}(t_j^-)\| \right| \\
& \leq \frac12\left|\|\Lambda^s_\epsilon z^{\{n, j + 1\}}_{tt}(t_j^+)\| - \|\Lambda^s_\epsilon z^{\{n, j + 1\}}_{tt}(t_j^-)\| \right|
\end{align*}
But then we have the bound
\begin{align*}
|\|\Lambda^s_\epsilon z^{\{n, j + 1\}}_{tt}(t_j^+)\| - \|\Lambda^s_\epsilon z^{\{n, j + 1\}}_{tt}(t_j^-)\|| & \leq 2\left(|\|\Lambda^s_\epsilon z^{\{n, j + 1\}}_{tt}(t_j^+)\| - \|\Lambda^s_\epsilon z^{\{n, j + 1\}}_{tt}(t_j^-)\|| - I_2\right) \\
& \leq 2(I_1 + I_3) \\
& \leq C_{(s, q, \|A_0\|_{H^s})}\frac{t_0}{n}
\end{align*}
so that, upon choosing $t_0 > 0$ sufficiently small depending on $s, q, \|A_0\|_{H^s}, 1 - \lambda_0$, we can complete the induction step as follows:
\begin{align*}
\max_{t \in [0, (j + 1)h)} \lambda(t) & \leq \max_{t \in [0, jh)} \lambda(t) \\
& \quad + |\lambda(z^{\{n, j + 1\}}_{tt}(t_j^+)) - \lambda(z^{\{n, j + 1\}}_{tt}(t_j^-))| \\
& \quad + \max_{t \in I_{j + 1}} |\lambda(z^{\{n, j + 1\}}_{tt}(t)) - \lambda(z^{\{n, j + 1\}}_{tt}(t_j^+))| \\
& \leq \left(\lambda_0 + \frac{j}{2n}(1 - \lambda_0)\right) + C_{(s, q, \|A_0\|_{H^s})}\frac{t_0}{n} \\
& \leq \left(\lambda_0 + \frac{j}{2n}(1 - \lambda_0)\right) + \frac{1}{2n}(1 - \lambda_0) \\
& \leq \left(\lambda_0 + \frac{j + 1}{2n}(1 - \lambda_0)\right)
\end{align*}
which completes the induction step.  Now take $z^{\{n\}}(t) := z^{\{n, n\}}(t)$ to be the desired solution.  The right hand continuity of $t \mapsto \|\Lambda^s_\epsilon z_{tt}^{\{n\}}(t)\|$ is a consequence of the definition of $m_n$ given in \eqref{PiecewiseConstantAverage}.
\end{proof}

\vspace{0.5cm}

\subsection{Extraction of a Solution by Compactness}

Denote $w_n := \Lambda^s_\epsilon z^{\{n\}}_{tt}$ for brevity.  In \S 4.2 we have constructed a sequence $(w_n)$ in $L^\infty(\mathbb{R}^2 \times [0, 2t_0])$ with the uniform bound
\begin{align*}
\max_{(x, y, t) \in \mathbb{R}^2 \times [0, 2t_0]} |w_n(x, y, t)| & \leq \max_{0 \leq t \leq 2t_0} C\|w_n(t)\|_{H^2} \\
& \leq \max_{0 \leq t \leq 2t_0} C_k\|w_n(t)\|_{L^2} \\
& \leq C_{k, \|A_0\|_{H^s}}\left(\frac{1 + \lambda_0}{2}\right)^\frac12 \\
& \leq C_{k, \|A_0\|_{H^s}}
\end{align*}

Fix a time $t \in [0, 2t_0)$.  By the Banach-Alaoglu Theorem, we may extract from $(w_n(t))$ a $\text{weak*-}L^\infty(\mathbb{R}^2)$ Cauchy subsequence, which we will again denoted by $(w_n(x, y, t))$.  Hence for all $f \in L^1(\mathbb{R}^2)$, the sequence $\langle w_n(t), f \rangle$ is Cauchy.

We claim that in fact for each fixed $(x, y) \in \mathbb{R}^2$, the sequence $(w_n(x, y, t))$ is Cauchy (note that this is trivial for all $(x, y) \in \mathbb{R}^2$ when $t = 0$).  To see this, let $\delta > 0$ be given, take $n_1, n_2 \in \mathbb{N}$ to be chosen later, and let $\psi = \mathbf{1}_Q$ where
\begin{equation}\label{BumpFunctionSupport}
Q = \left\{(x, y, t) \in \mathbb{R}^3 : |x| \leq \frac12, \, |y| \leq \frac12\right\}.
\end{equation}
For $\rho > 0$ consider the approximations to the identity $\psi_\rho(x, y) = \rho^{-2} \psi(\rho^{-1}(x, y))$.  Recall that since $w_n = \mathcal{B}_k w_n$ for each fixed $t$, $(x, y) \mapsto w_n(x, y, t)$ is continuous.  Hence $$w_{n_1}(x, y, t) = \lim_{\rho \to 0} \int_{[0, 2t_0] \times \mathbb{R}^2}  w_{n_1}(x^\prime, y^\prime, t) \psi_\rho(x - x^\prime, y - y^\prime) \, dx^\prime \, dy^\prime$$ by the continuity of $w_1(\cdot, t)$, and similarly for $w_2$.  We estimate that
\begin{align}\label{WeakStarTriangle}
& \quad \; |w_{n_1}(x, y, t) - w_{n_2}(x, y, t)| \notag \\
& \leq \left| w_{n_1}(x, y, t) - \int_{\mathbb{R}^2}  w_{n_1}(x^\prime, y^\prime, t) \psi_\rho(x - x^\prime, y - y^\prime) \, dx^\prime \, dy^\prime \right| \notag \\
& \quad + \left| \int_{\mathbb{R}^2}  (w_{n_1}(x^\prime, y^\prime, t^\prime) - w_{n_2}(x^\prime, y^\prime, t^\prime) ) \psi_\rho(x - x^\prime, y - y^\prime, t - t^\prime) \, dx^\prime \, dy^\prime \right| \\
& \quad + \left| w_{n_2}(x, y, t) - \int_{\mathbb{R}^2}  w_{n_2}(x^\prime, y^\prime, t) \psi_\rho(x - x^\prime, y - y^\prime) \, dx^\prime \, dy^\prime \right| \notag 
\end{align}
Since each $w_n = \mathcal{B}_k w_n$, we have the bound
\begin{align*}
|w_n(x, y, t) - w_n(x^\prime, y^\prime, t)| & \leq \|\nabla w_n(t)\|_{L^\infty_{xy}}|(x, y) - (x^\prime, y^\prime)| \\
& \leq \|w_n(t)\|_{H^3}|(x, y) - (x^\prime, y^\prime)| \\
& \leq C\|w_n(t)\|\,|(x, y) - (x^\prime, y^\prime)|
\end{align*}
from which we have the following uniform bound in $n$, thanks to Proposition \ref{Construct ApproximateSolutions}:
\begin{align*}
& \quad \left| w_{n}(x, y, t) - \int_{\mathbb{R}^2}  w_{n}(x^\prime, y^\prime, t) \psi_\rho(x - x^\prime, y - y^\prime) \, dx^\prime \, dy^\prime \right| \\
& \leq \left| \int_{\mathbb{R}^2} C(\|A_0\|_{H^s}) |(x - x^\prime, y - y^\prime)| \psi_\rho(x - x^\prime, y - y^\prime) \, dx^\prime \, dy^\prime \right| \\
& \leq C(\|A_0\|_{H^s})\rho
\end{align*}
Choose $\rho$ so small so that $C(\|A_0\|_{H^s})\rho < \frac13 \delta$.  By the weak$^*$ convergence of $(w_n(t))$, there is an $n_0$ (depending only on $\rho$, and thus only on $\|A_0\|_{H^s}$ and $\delta$) so that the second expression on the right hand side of \eqref{WeakStarTriangle} can be made smaller than $\frac13 \delta$ whenever $n_1, n_2 \geq n_0$.

This establishes that the sequence $(w_n)$ converges pointwise everywhere on $\mathbb{R}^2 \times [0, 2t_0)$ to a limit $w \in L^\infty([0, 2t_0) \times \mathbb{R}^2)$ with $\max_{(x, y, t) \in \mathbb{R}^2 \times [0, 2t_0)} |w(x, y, t)| \leq C_{k, \|A_0\|_{H^s}}$.  This $w$ is our candidate for a solution to \eqref{zEquationOutline}.

Since $w(x, y, t)$ is defined pointwise everywhere on $\mathbb{R}^2 \times [0, 2t_0)$, we may consider restrictions of the limit $w(x, y, t)$ for fixed $t$, and for fixed $(x, y)$.  By dominated convergence we have $w(t) \in L^2_{xy}$.  Moreover, we claim that for every fixed $t$, $w(t) = \mathcal{B}_k w(t)$.  To see this, we have
\begin{align*}
\|(I - \mathcal{B}_k)w\| & = \sup \{ |\langle (I - \mathcal{B}_k)w, f \rangle| : \|f\| = 1, \, f \text{ simple}\} \\
& = \sup \{ |\langle w, (I - \mathcal{B}_k)f \rangle|  : \|f\| = 1, \, f \text{ simple}\} \\
& =  \sup \{ \lim_{n \to \infty} |\langle w_n, (I - \mathcal{B}_k)f \rangle|  : \|f\| = 1, \, f \text{ simple}\} \\
& =  \sup \{ \lim_{n \to \infty} |\langle (I - \mathcal{B}_k) w_n, f \rangle|  : \|f\| = 1, \, f \text{ simple}\} \\
& = 0
\end{align*}
If we define as usual $$u_n(t) = u_0 + tv_0 + \int_0^t \int_0^\tau w_n(\tau^\prime) d\tau^\prime \, d\tau$$ and $u(t)$ similarly, then this also allows us to conclude quickly that $|D|^p u^{\{n\}} \to |D|^p u$ pointwise.  By dominated convergence in $t$ the question reduces to showing that $|D|^p (w_n - w) \to 0$ pointwise.  Since we have constructed $w_n$ so that $w_n = \mathcal{B}_k w_n$ and $w = \mathcal{B}_k w$ from above, we can write in addition that $|D|^p (w_n - w) = |D|^p \hat{\varphi}(|D|)(w_n - w)$ where $\hat{\varphi}(|\xi|)$ is a smooth compactly supported cutoff function that is identically 1 on the ball $\{\xi : |\xi| \leq 2\}$.  But then this allows us to write $|D|^p (w_n - w)$ as convolution of $w_n - w$ with a radial function $\psi(x, y)$ in Schwarz class, and so pointwise convergence follows by another application of dominated convergence in $x, y$.  

Similarly, if we denote $\lambda_n = \lambda(w_n)$ and $\lambda = \lambda(w)$, we have by dominated convergence in $x, y$ that $\lambda_n \to \lambda$ pointwise everywhere on $[0, 2t_0)$, and therefore that $\max_{0 \leq t < 2t_0} |\lambda(t) - \lambda_0| \leq \frac12(1 - \lambda_0)$.  Then dominated convergence in $t$ implies $\|\lambda_n - \lambda\|_{L^1_t} \to 0$ as $n \to \infty$.

The only fact left to show is that the term $g_*(m_n(\lambda_n))$ converges to $g_*(\lambda)$ almost everywhere in $[0, 2t_0)$.  It suffices to show that $\|g_*(m_n(\lambda_n)) - g_*(\lambda)\|_{L^1_t} \to 0$ as $n \to \infty$.  But it then suffices in turn to show that $\|m_n(\lambda_n) - \lambda \|_{L^1_t} \to 0$, since we have again by the Fundamental Theorem of Calculus and the fact that $m_n(\lambda_n), \lambda \leq \frac12 + \frac12 \lambda_0$ that
\begin{align*}
\|g_*(m_n(\lambda_n)) - g_*(\lambda)\|_{L^1_t} \leq \left\| g_*^\prime\left(\frac12 + \frac12\lambda_0 \right) \left(m_n(\lambda_n) - \lambda \right) \right\|_{L^1_t} \leq \frac{32\epsilon^{-(q + 4)}}{\mathscr{C}} \left\|m_n(\lambda_n) - \lambda \right\|_{L^1_t}
\end{align*}
To show the remaining estimate, we write
\begin{align*}
\|m_n(\lambda_n) - \lambda\|_{L^1_t} & \leq \|m_n(\lambda_n - \lambda)\|_{L^1_t} + \|m_n(\lambda) - \lambda\|_{L^1_t}
\end{align*}
The first term approaches zero as $n \to \infty$ since 
\begin{equation}\label{MeansConvergeInL1}
\|m_n(f)\|_{L^1} \leq \|f\|_{L^1}
\end{equation}
as
\begin{align*}
\|m_n(f)\|_{L^1[0, 2t_0)} \leq \sum_{j = 1}^n \left\|\fint_{I_j} f(\tau) \, d\tau \right\|_{L^1(I_j)} \leq \sum_{j = 1}^n h \left( \frac{1}{h}\int_{t_{j - 1}}^{t_j} |f(\tau)| \, d\tau \right) = \|f\|_{L^1[0, 2t_0)}
\end{align*}
To show that the second term approaches zero, recall (see for instance \cite{Royden}) that for any $\iota > 0$ we may find a step function $\lambda_\iota(t) = \sum_{l = 1}^m c_l \mathbf{1}_{[a_l, b_l]}$ where $\|\lambda_\iota\|_{L^\infty_t} \leq \|\lambda\|_{L^\infty_t}$,  $\|\lambda_\iota - \lambda\|_{L^1_t} < \iota$, and where without loss of generality we have $a_1 < b_1 < a_2 < \cdots < a_m < b_m$.  Then by a triangle inequality argument using \eqref{MeansConvergeInL1}, it suffices to show $\|m_n(\lambda_\iota) - \lambda_\iota\|_{L^1_t}$ can be made arbitrarily small for large $n$.  Now there can be at most $2m$ of the intervals $I_j = [(j - 1)t_n, jt_n]$ containing the endpoints $a_l, b_l$; if an interval $I_j$ does not contain one of these endpoints, $\lambda_\iota(t)$ is a constant on $I_j$ and so $\fint_{I_j} \lambda_\iota(\tau)\, d \tau - \lambda_\iota(t) = 0$ on all of $I_j$.  But then we have the bound
\begin{align*}
\|m_n(\lambda_\iota) - \lambda_\iota\|_{L^1_t} \leq \frac{2m}{n} 2\|\lambda_\iota\|_{L^\infty_t} \leq  \frac{4m}{n}\left(\frac12 + \frac12\lambda_0 \right) < \frac{4m}{n} < \iota
\end{align*}
provided we choose $n$ large enough depending on $m$ and $\iota$.  Now taking the pointwise limit of the equations \eqref{ApproxEqnOnSubinterval} yields the equation \eqref{zEquationOutline} for all $t$ excluding the measure zero set on which $m_n(\lambda_n(t)) - \lambda(t) \not\to 0$.  

Observe that since the initial data of the approximate solutions $(z^{\{n\}}(0), z_t^{\{n\}}(0))$ all agree, $z$ itself satisfies the correct initial data.  Since $z$ solves \eqref{zEquationOutline} with $g$ replaced by $g_*$ almost everywhere, there is a $\mathbf{t} \in (t_0, 2t_0)$ at which $z$ solves \eqref{zEquationOutline} with $g$ replaced by $g_*$.  Finally, note that since $g^\prime(\lambda_0) \leq \frac{16\epsilon^{-(q + 4)}}{\mathscr{C}}$, we have by \eqref{ZttStepBounds} that $\max_{t \in [0, \mathbf{t}]} g^\prime(\lambda(t)) \leq \frac{16\sqrt{2}\epsilon^{-(q + 4)}}{\mathscr{C}}$.  Therefore on $[0, \mathbf{t}]$ we have $g_*(\lambda(t)) = g(\lambda(t))$, and so the solution $z(t)$ constructed here satisfies \eqref{zEquationOutline}.  This completes the Proof of Proposition \ref{zLocalWellPosed}.

\begin{remark}\label{HamIsQuantUseless}
The role of the conserved Hamiltonian in the local well-posedness argument deserves comment.  One might wonder at the outset of this paper why one cannot use the coarse bound of Lemma \ref{AlmostConserved} provided by the almost conserved Hamiltonian of \eqref{zEquationOutline} to derive a bound on $\|\Lambda^s_\epsilon z_{tt}(t)\|$ without introducing a penalization term at all.  The reason is that, although the bounds given by Lemma \ref{AlmostConserved} are uniform on $[0, \mathscr{T}\epsilon^{-2}]$, they scale badly in $\epsilon$.  The key estimate in the proof of Theorem \ref{BigSummaryTheorem} therefore will not close in this case due to the poor dependence of $\epsilon_0$ on $\|A\|_{H^N}$.  The exception is if one were to replace $s$ by $0$, which only recovers the already-known mass conservation of $A$.  However, despite the fact that the bounds of Lemma \ref{AlmostConserved} are quantitatively useless, they are nevertheless indispensable in providing the qualitative uniform control needed to complete the construction of the approximating sequence in \S4.2.
\end{remark}

%\begin{remark}
%Although the solution constructed in Proposition \ref{zLocalWellPosed} is a limit of discontinuous functions, the finer control over $\|\Lambda^s_\epsilon z_{tt}^{\{n\}}(t)\|$ provided in Proposition \ref{Construct ApproximateSolutions} is in fact enough to show the continuity in time of $t \mapsto \|\Lambda^s_\epsilon z_{tt}(t)\|$.  However, we will not record the proof here since it is not needed in the rest of the argument.
%\end{remark}

%==============================================================================
%==============================================================================
%==============================================================================
\section{Long-Time Wellposedness of the Progenitor Equation}
%==============================================================================
%==============================================================================
%==============================================================================

It only remains to show that the solution $z(t)$ to the problem \eqref{zEquationOutline} exists on the whole interval $[0, \mathscr{T}\epsilon^{-2}]$; this is equivalent to providing an a priori upper bound on $\lambda(t)$ that is uniformly away from 1 on $[0, \mathscr{T}\epsilon^{-2}]$.  We do this by showing that $\lambda^\prime(t)$ is negative for values of $\lambda(t)$ sufficiently close to 1.  We begin with the formal calculation of this fact.  Unless otherwise indicated, the constants $C$ that appear below may change from line to line, but depend only on $s, p, q, k, \|A_0\|_{H^N}, \mathscr{T}, \nu$.  Assuming that the quantity $z_{ttt}$ is well-behaved and we are considering an interval of time on which $z(t)$ solves \eqref{zEquationOutline}, we have by Proposition \ref{RemainderEstimates} that
\begin{align*}
\lambda^\prime(t) & = 2\frac{\langle \Lambda^{2s}_\epsilon z_{tt}, z_{ttt} \rangle}{\nu^2 \omega^4 \|\Lambda^s A_0\|^2} \\
& = 2\frac{\langle \Lambda^{2s}_\epsilon z_{tt}, -|D|z_t - \epsilon^{2}\partial_t\mathcal{B}_k(z|z/\epsilon|^{q - 1}) - \mathcal{N}_t g(\lambda) - \mathcal{N} g^\prime(\lambda) \lambda^\prime(t)\rangle}{\nu^2 \omega^4 \|\Lambda^s A_0\|^2} \\
& \geq 2\frac{\langle \Lambda^{2s}_\epsilon z_{tt}, -|D|z_t - \epsilon^{2}\partial_t\mathcal{B}_k(z|z/\epsilon|^{q - 1}) - \mathcal{N} g^\prime(\lambda) \lambda^\prime(t)\rangle}{\nu^2 \omega^4 \|\Lambda^s A_0\|^2} - C\epsilon^7
\end{align*}
Using Propositions \ref{LambdaPrimeMultiscale} and \ref{RemainderEstimates}, we find that to leading order that there is a positive constant $C_\omega > 0$ so that
\begin{align*}
& \quad 2\frac{\langle \Lambda^{2s}_\epsilon z_{tt}, -|D|z_t - \epsilon^{2}\partial_t\mathcal{B}_k(z|z/\epsilon|^{q - 1})\rangle}{\nu^2 \omega^4 \|\Lambda^s A_0\|^2} \\
& \geq 2\frac{\langle \Lambda^{2s}_\epsilon \tilde{z}_{tt}, -|D|\tilde{z}_t - \epsilon^{2}\partial_t\mathcal{B}_k(\tilde{z}|\tilde{z}/\epsilon|^{q - 1}) \rangle}{\nu^2 \omega^4 \|\Lambda^s A_0\|^2} - C\epsilon^3\\
& \geq C_\omega \epsilon^2 \frac{\langle \Lambda^{2s}A, iA|A|^{q - 1}\rangle}{\nu^2 \|\Lambda^s A_0\|^2} - C\epsilon^3.
\end{align*}
so that we have the estimate
\begin{align*}
\lambda^\prime(t) & \geq C_\omega \epsilon^2 \frac{\langle \Lambda^{2s}A, iA|A|^{q - 1}\rangle}{\nu^2\|\Lambda^s A_0\|^2} - C\epsilon^3 + 2\frac{\langle \Lambda^{2s}_\epsilon z_{tt}, -\mathcal{N} \rangle}{\nu^2 \omega^4 \|\Lambda^s A_0\|^2} g^\prime(\lambda(t))\lambda^\prime(t)
\end{align*}
Now since $\mathscr{C} > 1$, we can write using Assumption 1 that
\begin{align*}
\frac{\langle \Lambda^{2s}A, iA|A|^{q - 1}\rangle}{\nu^2 \|A_0\|_{H^s}^2} & = \frac{1}{2(T + T_1)\nu^2\|A_0\|_{H^s}^2}\left(\frac{d}{dT}\left((T + T_1)\|A(T)\|_{H^s}^2\right) - \|A(T)\|_{H^s}^2 \right) \\
& \geq \frac{\mathscr{C} - 1}{2T_2}\frac{\|A(T)\|_{H^s}^2}{\nu^2\|A_0\|_{H^s}^2} \\
& \geq \frac{\mathscr{C} - 1}{8T_2}
\end{align*}
upon choosing $\epsilon_0 > 0$ sufficiently small.  We then have the following ODE for $\lambda(t)$:
\begin{equation}\label{ODEBoundOnLambda}
\left(1 - 2\frac{\langle \Lambda^{2s}_\epsilon z_{tt}, -\mathcal{N} \rangle}{\nu^2\omega^4 \|\Lambda^s A_0\|^2} g^\prime(\lambda(t)) \right)\lambda^\prime(t) \geq \frac{\mathscr{C} - 1}{8T_2}C_\omega \epsilon^2
\end{equation}
Now suppose that we consider a time $t_*$ at which $\lambda(t)$ is close enough to $1$ so that $$\epsilon^{q + 4} g^\prime(\lambda(t_*)) \geq \frac{8}{\mathscr{C}}$$  Then we can estimate the remaining unsimplified term in \eqref{ODEBoundOnLambda} as follows:
\begin{align*}
& \quad 2\frac{\langle \Lambda^{2s}_\epsilon z_{tt}, -\mathcal{N} \rangle}{\nu^2 \omega^4 \|\Lambda^s A_0\|^2} \\
& = 2\frac{\langle -\Lambda^{2s}_\epsilon z_{tt}, \omega^2\mathcal{B}_k\left(\epsilon^{q + 4}z + 2i\epsilon^5 (\epsilon^2 t + T_1) z|z|^{q - 1}\right) \rangle}{\nu^2 \omega^4 \|\Lambda^s A_0\|^2} \\
&  \geq 2\frac{\langle -\Lambda^{2s}_\epsilon \tilde{z}_{tt}, \omega^2\mathcal{B}_k\left(\epsilon^{q + 4}\tilde{z} + 2i\epsilon^5 (\epsilon^2 t + T_1) \tilde{z}|\tilde{z}|^{q - 1}\right) \rangle}{\nu^2 \omega^4 \|\Lambda^s A_0\|^2} - C\epsilon^{q + 7} \\
&  \geq 2\frac{\langle \omega^2\Lambda^{2s}_\epsilon \tilde{z}, \omega^2\left(\epsilon^{q + 4}\tilde{z} + 2i\epsilon^5 (\epsilon^2 t + T_1) \tilde{z}|\tilde{z}|^{q - 1}\right) \rangle}{\nu^2 \omega^4 \|\Lambda^s A_0\|^2} - C\epsilon^{q + 5} \\
& \geq 2\epsilon^{q + 4}\frac{\langle \Lambda^{2s} A, A + 2i(T + T_1)A|A|^{q - 1}\rangle}{\nu^2 \|\Lambda^s A_0\|^2} - C\epsilon^{q + 5} \\
& \geq \epsilon^{q + 4}\left(\frac{2\mathscr{C}\|A\|_{H^s}^2}{\nu^2\|A_0\|_{H^s}^2} - C\epsilon\right) \\
& \geq \epsilon^{q + 4} \left(2\mathscr{C}\lambda(t) - C\epsilon\right)
\end{align*}
In the above we used Proposition \ref{RemainderEstimates} in the second step, Lemma \ref{WavePacketCutoff} in the third step as well as Proposition \ref{LambdaPrimeMultiscale} in the third and fourth steps, and the growth condition of Assumption 1 in the fifth step.  The final step is just the definition of $\lambda(t)$ along with Propositions \ref{LambdaPrimeMultiscale} and \ref{RemainderEstimates}.

As a result, the coefficient of $\lambda^\prime(t_*)$ in \eqref{ODEBoundOnLambda} is negative.  Thus \textit{if} one assumes that $\lambda(t)$ is continuous in $t$ and $\lambda(t) < 1$, we conclude that that $\lambda^\prime(t)$ is negative when $\lambda(t)$ is close enough to 1 for small enough $\epsilon_0 > 0$, which yields an a priori bound of $\lambda(t)$ away from 1.

This is not literally correct since Proposition \ref{zLocalWellPosed} alone does not assert that $\lambda(t)$ is continuous, and since the governing equation \eqref{zLocalWellPosed} is only satisfied almost everywhere.  However, the estimate (iv) of Proposition \ref{zLocalWellPosed}(b) gives at least some control of the rate at which $\lambda(t)$ can change over the course of successive local constructions.  If $\lambda(t)$ becomes sufficiently close to $1$, then this weaker control permits us to conclude that $\lambda(t)$ is very close to 1 but still uniformly bounded away from 1 on an interval of time.  On such an interval, the same formal calculation above implies that $\lambda^\prime(t)$ remains almost everywhere bounded, which in turn implies that $\lambda(t)$ is absolutely continuous and is enough to make the naive ODE calculation above rigorous.  The precise argument is as follows:

\begin{proof}[Proof of Proposition \ref{zLongTimeWellPosed}.]
By Proposition \ref{zLocalWellPosed} (a), (b) there exists a solution to \eqref{zEquationOutline} for some short time $[0, \mathbf{t}_1]$.  The single use of Proposition \ref{zLocalWellPosed}(a) entails making only a single restriction on the smallness of $\epsilon_0$.  Then, using Proposition \ref{zLocalWellPosed}(b) along with Proposition \ref{RemainderEstimates}, we can iterate the construction on time intervals of the form $[0, \mathbf{t}_1], [\mathbf{t}_1, \mathbf{t}_2], \ldots, [\mathbf{t}_j, \mathbf{t}_{j + 1}], \ldots$.  On each interval the parameter $\mathscr{T}_1$ of Proposition \ref{zLocalWellPosed} is chosen so that $\mathscr{T}_1\epsilon^{-2} = \mathbf{t}_j$.   The length of the interval $[\mathbf{t}_j, \mathbf{t}_{j + 1}]$ depends on $s, p, q, \epsilon, \mathscr{T}, \|A_0\|_{H^s}, 1 - \lambda(t_j)$.  Notice also that for any $j$, if $\mathbf{t}_{j + 1} \leq \mathscr{T}\epsilon^{-2}$, then the hypotheses of Proposition \ref{RemainderEstimates} hold uniformly in $j$: therefore after each local construction, one can apply Proposition \ref{RemainderEstimates} on $[0, \mathbf{t}_{j + 1}]$ with the same choice of $\epsilon_0$ for each $j$ to conclude that the hypotheses (i)-(iii) on the initial data in Lemma \ref{NanoLocalWellposedness} continue to hold whenever $\mathbf{t}_{j + 1} \leq \mathscr{T}\epsilon^{-2}$.

Because of the dependence of the existence times on $\lambda(\mathbf{t}_j)$, it is at this point still possible that the lengths of the intervals may decrease in such a way that the total existence time converges.  We claim that this cannot happen; since the only dependence of the existence times that is not uniformly controlled in $j$ is through $\lambda(\mathbf{t}_j)$, it suffices to show that $1 - \lambda(\mathbf{t}_j)$ remains bounded below for all $j$

By Proposition \ref{zLocalWellPosed}(a) we know that $\lambda(0) \leq \frac12 + \frac{1}{2\nu}$, so that $\epsilon^{q + 4}g^\prime(\lambda(0)) < \frac{8}{\mathscr{C}}$.  We begin a contradiction argument.  Suppose that in the course of this construction there is a first interval $(\mathbf{t}_{j}, \mathbf{t}_{j + 1}]$ for which $\max_{t \in (\mathbf{t}_j, \mathbf{t}_{j + 1}]} \epsilon^{q + 4} g^\prime(\lambda(t^*)) > \frac{8}{\mathscr{C}}$.  (This notion is well-defined since $\lambda(t)$ is defined pointwise everywhere, c.f. Remark \ref{LambdaSomewhatControlled}).  On this interval, we have by Proposition \ref{zLocalWellPosed}(b) that for all $t \in (\mathbf{t}_j, \mathbf{t}_{j + 1}]$, 
\begin{equation}\label{DiscreteContinuityBound}
\frac{4}{\mathscr{C}} \leq \epsilon^{q + 4}g^\prime(\lambda(t)) \leq \frac{16}{\mathscr{C}}
\end{equation}  But then by following the calculations preceding \eqref{ODEBoundOnLambda} on this interval, we find that $\lambda^\prime(t)$ is defined almost everywhere on $[\mathbf{t}_j, \mathbf{t}_{j + 1}]$.  Specifically, if we choose $\epsilon_0 > 0$ sufficiently small, inserting \eqref{DiscreteContinuityBound} into \eqref{ODEBoundOnLambda} implies that
\begin{equation}
\lambda^\prime(t) \leq  -\frac{\mathscr{C} - 1}{504T_2}C_\omega \epsilon^2 \qquad \text{a.e. } t \in (\mathbf{t}_j, \mathbf{t}_{j + 1}],
\end{equation}
But then $\lambda(t)$ is absolutely continuous and strictly decreasing on $[\mathbf{t}_j, \mathbf{t}_{j + 1}]$, and thus $\lambda(t_j) > \lambda(t^*)$.  But then this contradicts the assumption that $(\mathbf{t}_j, \mathbf{t}_{j + 1}]$ is the first interval on which $\lambda(t)$ exceeds the value $8/\mathscr{C}$.  

This establishes the a priori bound $\lambda(t) \leq 1 - \left(\frac{\mathscr{C}}{16}\epsilon^{q + 4}\right)^2$.  Hence we can apply Proposition \ref{zLocalWellPosed}(b) a finite number of times depending on $s, p, q, \|A_0\|_{H^s}, \mathscr{T}, \epsilon$ to construct a solution defined on all of $[0, \mathscr{T}\epsilon^{-2}]$.
\end{proof}

\begin{remark}
We emphasize that although the number of constructions needed in the above proof depends on $\epsilon$, the number of smallness restrictions on $\epsilon_0 > 0$ is independent of the number of constructions.  Hence there is no danger that we are forced to choose a sequence of restrictions that forces $\epsilon_0$ to be zero.
\end{remark}

%==============================================================================
%==============================================================================
%==============================================================================
\section{Conclusion}
%==============================================================================
%==============================================================================
%==============================================================================

We have shown that the NLS equations \eqref{NormalizedNLSPrescale} are globally well-posed and, moreover, enjoy the property that all of their subcritical inhomogeneous Sobolev norms grow at a polynomial rate.  Since solutions to \eqref{NormalizedNLSPrescale} with large initial data avoid the characteristic finite time blow-up of the focusing type equation, we subsume both defocusing and hyperbolic NLS equations under the term \textit{non-focusing} NLS equations.  We conclude by further discussing the method used in this paper, comparing with known results, and considering possible generalizations.

The original goal of this paper was to prove global well-posedness for $(qHNLS)$.  However, this result improves the regularities at which large data solutions to $(qNLS)$ are globally well-posed; for all $q \in 2\mathbb{N} + 1$, we conclude global well-posedness for large data in any subcritical Sobolev spaces.  The best known results in this direction when the critical index $s_c$ does not correspond to a conserved quantity are a consequence of either the I-method or the Fourier truncation method of Bourgain\footnote{Also called the ``high-low'' method.}, both of which do not allow one to conclude global well-posedness in the entire subcritical range (c.f. \cite{BourgainHighLowSurvey} for a survey of the Fourier truncation method, \cite{CKSTT2003b} for an application of the I-method to $(qNLS)$, and \cite{TaoNonlinearDispersiveEquations} for further discussion).  

Even so, there is room to improve the results presented here.  Because of the strong dependence on subcritical persistence of regularity, we cannot conclude anything about the global well-posedness in critical Sobolev spaces.  Similarly, the bounds of Theorem \ref{BigSummaryTheorem} cannot be used directly to conclude whether the equations considered here scatter (even in light of Remark \ref{BringDownHugeConstant}).

It is natural to ask whether a fully nonlinear progenitor equation is needed to carry out the program of this paper, since the full nonlinearity creates significant technical complications.  Of course we will not attempt to conclusively demonstrate here that a progenitor with a full nonlinearity is required in order to carry out the strategy of this paper; a simpler or less cumbersome progenitor may very well exist, especially in light of the fact that the NLS equations under consideration here arise generically in the modulation regime of a large number of dispersive PDEs.  However, if one designs a progenitor with the same linearization and power nonlinearity as in \eqref{zEquationOutline}, some sort of penalization term as is defined here must be introduced, since nothing about such a progenitor would control $\Lambda^s_\epsilon z$ or its derivatives (in this context see Remark \ref{HamIsQuantUseless}).  If one introduces a penalization term depending only on $z$ and $z_t$, then there are two possibilities.  (1) The introduced term contributes terms on the order $O(\epsilon^3)$ or larger on the NLS time scales, which forces one to either change the NLS equation in question or discard the error estimates, both of which spoil the full justification.  (2) The penalization term is beyond the order in $\epsilon$ of the NLS equation: in this case the error estimates for $z$, $z_t$, $z_{tt}$ remain valid, but the penalization term does not affect the dynamics of the progenitor over the NLS time scales, producing essentially the same effect as having no penalization term whatsoever.  Introducing a full nonlinearity allows one to simultaneously keep $z$, $z_t$, $z_{tt}$ in the modulation regime as well as being able to control the evolution of $z_{tt}$ through an ODE (see Remark \ref{HighDerivsNotWavePackets}).

In a forthcoming work, we plan to generalize the method used in this paper to NLS equations of arbitrary non-focusing type\footnote{As in this paper, this is a shorthand term for NLS equation which are either elliptic with the defocusing choice of sign on the power nonlinearity, or else of arbitrary mixed signature with either choice of sign on the power nonlinearity.} in higher dimensions $\mathbb{R}^d$ for $d \geq 3$.  This setting is especially interesting because it would overcome a long-standing obstacle to showing global well-posedness for NLS equations for which the critical index $s_c > 1$.  Current methods for showing global well-posedness for such equations fail because all of the natural conserved quantities scale supercritically with respect to the natural scaling of the equation.  This makes them unsuitable for use in supplying useful global a priori bounds.  This is the same obstacle one faces in showing long-time existence for NLS equations of hyperbolic type, except that the Hamiltonian is useless for reasons of scaling instead of lack of coercivity.  A priori bounds analogous to those presented in this paper would provide a partial substitute for the missing Hamiltonian and show global well-posedness in subcritical Sobolev spaces.  This would be the first method that provides global well-posedness results for this class of equations.

\bibliography{Mybib}{}
\bibliographystyle{plain}

\end{document}